\theoremstyle{plain}
\newtheorem{theorem}{Theorem}[section]
\newtheorem{lemma}[theorem]{Lemma}
\newtheorem{proposition}[theorem]{Proposition}
\newtheorem{corollary}[theorem]{Corollary}
\newtheorem{thmx}{Theorem}
\newtheorem{corollaryx}[thmx]{Corollary}
\newtheorem{propositionx}[thmx]{Proposition}
\theoremstyle{definition}
\newtheorem{definition}[theorem]{Definition}
\newtheorem{example}[theorem]{Example}
\theoremstyle{remark}
\newtheorem{remark}[theorem]{Remark}
\newtheorem{question}[theorem]{Question}
\newcommand{\Rom}[1]{\uppercase\expandafter{\romannumeral#1}}
\newcommand{\A}{\mathcal A}
\newcommand{\E}{\mathcal E}
\renewcommand{\H}{\mathcal H}
\renewcommand{\O}{\mathcal O}
\newcommand{\bL}{{\bf L}}
\newcommand{\bR}{{\bf R}}
\renewcommand{\AA}{\mathbb A}
\newcommand{\CC}{\mathbb C}
\newcommand{\DD}{\mathbb D}
\newcommand{\HH}{\mathbb H}
\newcommand{\LL}{\mathbb L}
\newcommand{\PP}{\mathbb P}
\newcommand{\ZZ}{\mathbb Z}
\DeclareMathOperator{\PGL}{PGL}
\newcommand{\xto}{\xrightarrow} 
\newcommand{\blank}{{-}} 
\DeclareMathOperator{\sHom}{\mathscr{H}\text{\kern -3pt {\calligra\large om}}\,}
\newcommand{\RHom}{{\bf R} \H om}
\newcommand{\sExt}{\E xt} 
\newcommand{\DB}{\underline{\Omega}} 
\newcommand{\dt}{\otimes^{\LL}} 
\DeclareMathOperator{\qis}{qis}
\DeclareMathOperator{\Cone}{Cone}
\DeclareMathOperator{\cotors}{cotors}
\DeclareMathOperator{\sing}{sing}
\DeclareMathOperator{\supp}{supp}
\DeclareMathOperator{\Spec}{Spec}
\DeclareMathOperator{\codim}{codim}
\DeclarePairedDelimiter\abs{\lvert}{\rvert}
\newcommand{\bracket}[1]{\left(#1\right)}
\let\oldabs\abs
\def\abs{\@ifstar{\oldabs}{\oldabs*}}
\theoremstyle{definition} 
\newcommand{\thistheoremname}{}
\newtheorem*{genericthm}{\thistheoremname}
\newcommand{\todo}[1]{\textcolor{red}{Todo: #1}}
\newcommand{\Addresses}{{
  \vspace{\bigskipamount}
  \footnotesize
  \textsc{Department of Mathematics, Harvard University, 1 Oxford Street, Cambridge, MA 02138, USA}\par\nopagebreak
  \textit{E-mail address}: \texttt{wshen@math.harvard.edu}

  \vspace{\bigskipamount}

  \textsc{Department of Mathematics, University of Michigan, 530 Church Street, Ann Arbor, MI 48109, USA}\par\nopagebreak
  \textit{E-mail address}: 
  \texttt{srivenk@umich.edu}
  
  \vspace{\bigskipamount}
  
  \textsc{Department of Mathematics, Harvard University, 1 Oxford Street, Cambridge, MA 02138, USA}\par\nopagebreak
  \textit{E-mail address}: \texttt{ducvo@math.harvard.edu}
}}
\title{On $k$-Du Bois and $k$-rational singularities}
\author{Wanchun Shen}
\author{Sridhar Venkatesh}
\thanks{S.V. was partially supported by NSF grant DMS-2001132.}
\author{Anh Duc Vo}
\begin{document}

\maketitle

\begin{abstract}
We introduce new notions of $k$-Du Bois and $k$-rational singularities, extending the previous definitions in the case of local complete intersections (lci), to include natural examples outside of this setting. We study the stability of these notions under general hyperplane sections and show that varieties with $k$-rational singularities are $k$-Du Bois, extending previous results in \cite{MP-lci} and \cite{FL-Saito} in the lci and the isolated singularities cases. In the process, we identify the aspects of the theory that depend only on the vanishing of higher cohomologies of Du Bois complexes (or related objects), and not on the behaviour of the K\"ahler differentials.

\end{abstract}

\section{Introduction}
Rational and Du Bois singularities have been studied extensively because of their close relations to the minimal model program; in particular, Kawamata log terminal singularities are rational, while log canonical singularities are Du Bois. The recent development of Hodge theoretic methods has led to considerable interest in the study of their higher analogues, the $k$-Du Bois and $k$-rational singularities (see \cite{MOPW}, \cite{JKSY}, \cite{FL-Saito}, \cite{FL-isolated}, \cite{MP-lci}, \cite{CDM-k-rational}). A local complete intersection (lci) subvariety $X$ of a smooth variety $Y$ is said to have \textit{$k$-Du Bois singularities} if the canonical morphisms 
\begin{equation}
\Omega^p_X \to \DB^p_X \text{ are isomorphisms for all }0\le p\le k, \tag{strict-$k$-DB}\label{def:old-k-DB}
\end{equation}
where $\DB^p_X$ is the $p$-th graded piece, suitably shifted, of the Du Bois complex of $X$ with respect to the natural filtration. On the other hand, $X$ is said to have \textit{$k$-rational singularities}
if the canonical morphisms 
\begin{equation}
\Omega^p_X \to \DD_X(\DB^{n-p}_X)  
\text{ are isomorphisms for all }0\le p\le k, \tag{strict-$k$-ratl} \label{def:old-k-rat}
\end{equation}
where $\DD_X(-)\coloneqq \RHom(-, \omega_X^\bullet)[-n]$. By \cite[Theorem 3.8, Corollary 3.17, 5.9]{FL-Saito}, this is the same as requiring that the natural maps
\[ \Omega^p_X \to \bR f_*\Omega^p_{\tilde X}(\log E) \text{ are isomorphisms for all }0\le p\le k, \]
for any strong log resolution $f: \tilde X \to X$ with reduced exceptional divisor $E$.  

Although the above definitions make sense for arbitrary varieties, most of the results in \textit{loc. cit.} apply only for hypersurfaces, or more generally, for lci varieties. Outside the lci case, not much is known. In fact, even varieties with very mild singularities, such as cones over Veronese varieties (see Example \ref{ex: cone-over-Pr}), do not satisfy the above condition (\ref{def:old-k-DB}) for any $k \geq 1$, due to the non-reflexivity of the K\"ahler differentials (\cite[Proposition 10]{greb-rollenske-torsion-kahler}). On the other hand, the applications of $k$-Du Bois and $k$-rational singularities to deformation theory raise the need to extend results like \cite[Corollary 3.6, 3.7]{FL-def} to such cones, even when they are not necessarily lci. Thus, to develop a good theory in general, it is desirable to introduce new definitions that agree with (\ref{def:old-k-DB}) and (\ref{def:old-k-rat}) in the lci case, but are flexible enough to include natural non-lci examples. In the present paper, we place our attention on the study of these new notions, and refer to the naive extension of conditions (\ref{def:old-k-DB}) and (\ref{def:old-k-rat}) for non-lci varieties as \textit{strict-$k$-Du Bois} and \textit{strict-$k$-rational singularities}. 

As a first step towards the general theory, we define and study the weaker notions of \textit{pre-$k$-Du Bois} and \textit{pre-$k$-rational} singularities, removing the conditions in cohomological degree $0$.

\begin{definition}\label{definition:pre-k-DB-rational}
Let $X$ be a variety of dimension $n$.
\begin{itemize}
    \item We say that $X$ has \textit{pre-$k$-Du Bois} singularities if 
    \[ \H^i\DB_X^p=0 \]
    for all $i>0$ and $0\le p\le k$.
    \item We say that $X$ has \textit{pre-$k$-rational} singularities if
    \[\H^i(\DD_X(\DB^{n-p}_X))=0\]
    for all $i>0$ and $0\le p\le k$\footnote{When $k<\codim_X (X_{\sing})$, one can show (Lemma \ref{lemma:h0-dual-and-log-differentials}) that this condition is equivalent to the more familiar condition $R^if_*\Omega_{\tilde{X}}^p(\log E)=0$ for all $i>0$ and $p\le k$: see Remark \ref{remark: equiv-def-pre-k-rat}.}.
\end{itemize}
\end{definition} 
The consideration of these notions has the advantage of isolating the problems caused by the non-reflexivity of the K\"ahler differentials, and making clear which aspects of the theory continue to hold without assumptions in cohomological degree $0$. Examples of varieties with pre-$k$-Du Bois or pre-$k$-rational singularities are discussed at the end of the introduction, and in Sections \ref{section:toric-varieties} and \ref{section:cones}.


It was proved in \cite{FL-Saito} that the notions of $k$-Du Bois and $k$-rational singularities are stable under general hyperplane sections for lci varieties. In this paper, we prove that this is also the case for pre-$k$-Du Bois and pre-$k$-rational singularities in the general setting. 

\begin{thmx}\label{theorem:gen-hyperplane-sections-pre-k-DB}
Let $X$ be a quasi-projective variety. If $X$ has pre-$k$-Du Bois (resp. pre-$k$-rational) singularities, then a general hyperplane section $H$ of $X$ also has pre-$k$-Du Bois (resp. pre-$k$-rational) singularities.
\end{thmx}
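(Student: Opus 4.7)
The plan is to proceed by induction on $p$, using a distinguished triangle of Du Bois complexes that plays the role of the smooth conormal short exact sequence. Fix a sufficiently general hyperplane section $i \colon H \hookrightarrow X$. The key input is a Du Bois conormal triangle
\[ \DB^{p-1}_H \otimes \O_H(-H) \to \bL i^* \DB^p_X \to \DB^p_H \xrightarrow{+1} \]
in $D^b_{\mathrm{coh}}(H)$, analogous to the smooth sequence $0 \to \Omega^{p-1}_H(-H) \to \Omega^p_X|_H \to \Omega^p_H \to 0$. I would construct this by fixing a log resolution $f \colon \tilde X \to X$ with simple normal crossing exceptional divisor $E$, using Bertini to ensure that $\tilde H := f^{-1}(H)$ is smooth, transverse to $E$, and that $f|_{\tilde H} \colon \tilde H \to H$ is a log resolution of $H$, and then descending the smooth residue short exact sequence
\[ 0 \to \Omega^p_{\tilde X}(\log E) \to \Omega^p_{\tilde X}(\log(E+\tilde H)) \to \Omega^{p-1}_{\tilde H}(\log E|_{\tilde H}) \to 0 \]
through $Rf_*$, tracking the behavior under cubical hyperresolution so as to identify the outer terms with $\DB^p_X$ and $\DB^{p-1}_H$, respectively.

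Alongside the conormal triangle, tensoring $0 \to \O_X(-H) \to \O_X \to \O_H \to 0$ with $\DB^p_X$ gives the restriction triangle
\[ \DB^p_X \otimes \O_X(-H) \to \DB^p_X \to i_* \bL i^* \DB^p_X \xrightarrow{+1}. \]
For the pre-$k$-Du Bois case, since $\O_X(-H)$ is a line bundle the hypothesis $\H^j \DB^p_X = 0$ for $j > 0$, $0 \le p \le k$ is preserved under the twist, so the long exact sequence of cohomology sheaves gives $\H^j \bL i^* \DB^p_X = 0$ for $j > 0$, $0 \le p \le k$. I then induct on $p$: the base case $p = 0$ is immediate since $\DB^{-1}_H = 0$ forces $\DB^0_H \cong \bL i^* \DB^0_X$; for the inductive step, the long exact sequence associated to the conormal triangle combines the vanishing of $\H^j(\DB^{p-1}_H \otimes \O_H(-H))$ (from the inductive hypothesis) with that of $\H^j \bL i^* \DB^p_X$ to give $\H^j \DB^p_H = 0$ for $j > 0$.

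For the pre-$k$-rational version, I would dualize the conormal triangle using Grothendieck--Verdier duality. For the Cartier divisor $i$, the identification $\omega_H^\bullet \cong \bL i^* \omega_X^\bullet \otimes \O_H(H)[-1]$ yields $\DD_H(\bL i^* \DB^{n-p}_X) \cong \bL i^* \DD_X(\DB^{n-p}_X) \otimes \O_H(H)$, and hence the triangle
\[ \DD_H(\DB^{n-p}_H) \otimes \O_H(-H) \to \bL i^* \DD_X(\DB^{n-p}_X) \to \DD_H(\DB^{n-p-1}_H) \xrightarrow{+1}. \]
The same inductive argument on $p$ -- with $\DD_X(\DB^{n-p}_X)$ and $\DD_H(\DB^{n-1-p}_H)$ replacing $\DB^p_X$ and $\DB^p_H$ -- then gives the desired vanishing, the base case $p = 0$ now using $\DB^n_H = 0$ (since $\dim H = n-1$) to produce the isomorphism $\DD_H(\DB^{n-1}_H) \cong \bL i^* \DD_X(\DB^n_X)$.

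The main obstacle is establishing the conormal triangle cleanly outside the lci setting. In the lci case it arises from the standard residue sequence on a log resolution, as in the treatment of \cite{FL-Saito}; in general the descent through $Rf_*$ does not immediately identify the outer terms with the correct Du Bois complexes, so one must argue via cubical hyperresolutions and verify Tor-vanishings along $i$ using the generality of $H$ (specifically, that $\tilde H$ is a general member of a basepoint-free linear system on $\tilde X$, so that derived and underived restriction coincide where the argument needs them). Once the triangle and its dual are in hand, both cases reduce to routine long-exact-sequence chases.
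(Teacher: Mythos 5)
Your overall strategy is the same as the paper's: the Du Bois conormal triangle $\DB^{p-1}_H(-H)\to \bL i^*\DB^p_X\to \DB^p_H\xto{+1}$, an induction with two long-exact-sequence chases, and Grothendieck duality to convert the pre-$k$-Du Bois case into the pre-$k$-rational case. Those parts of your argument are correct (your twist $\bL i^*\DD_X(\DB^{n-p}_X)$ versus the paper's $\DD_X(\DB^{n-p}_X)(-H)|_H$ differs only by a line bundle, which is harmless for the vanishing statements), and your derivation of $\H^j\bL i^*\DB^p_X=0$ from the restriction triangle is fine.

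The genuine gap is exactly the step you flag as the ``main obstacle'': the conormal triangle itself is never established, and your primary proposed construction of it does not work. Pushing forward the residue sequence $0\to\Omega^p_{\tilde X}(\log E)\to\Omega^p_{\tilde X}(\log(E+\tilde H))\to\Omega^{p-1}_{\tilde H}(\log E|_{\tilde H})\to 0$ produces a triangle whose outer terms are $\bR f_*\Omega^p_{\tilde X}(\log E)$ and $\bR f_*\Omega^{p-1}_{\tilde H}(\log E|_{\tilde H})$, and these are \emph{not} $\DB^p_X$ and $\DB^{p-1}_H$ outside special situations --- by the (dual of the) Steenbrink triangle they compute $\DD_X(\DB^{n-p}_X)$-type objects, and the paper shows in Remark \ref{remark:two-defn-k-rational-distinct} that the two genuinely differ even for simplicial toric varieties. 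What actually closes the gap, and what the paper does, is: (i) cite \cite[V.2.2.1]{GNPP} for the triangle $\DB^{p-1}_H(-H)\to \bR\epsilon'_{\bullet*}(\Omega^p_{X_\bullet}\dt\O_{H_\bullet})\to\DB^p_H\xto{+1}$ at the level of a simplicial resolution $\epsilon_\bullet\colon X_\bullet\to X$, and (ii) prove a base change lemma identifying the middle term with $\DB^p_X\dt\O_H$, by writing $\DB^p_X$ as an iterated cone of the $\bR\epsilon_{i*}\Omega^p_{X_i}$ and applying Tor-independent base change to each $X_i\times_X H$, using that $H$ is general. You correctly name ``cubical hyperresolutions plus Tor-vanishing from generality'' as the needed tool, but since this identification is the technical heart of the theorem, deferring it leaves the proof incomplete; the remaining diagram chases are, as you say, routine once the triangle is in hand.
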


As a corollary, one can show that if a variety $X$ has strict $k$-rational or strict $k$-Du Bois singularities, then so does a general hyperplane section of $X$.

It is well-known by the works of Kov\'acs \cite{kovacs-rational-singularities-are-DB} and Saito \cite{Saito-MHC} that rational singularities are Du Bois. Recently, it was proved that strict $k$-rational implies strict $k$-Du Bois when $X$ is lci (\cite[Theorem B]{MP-lci}, \cite[Theorem 1.6]{FL-Saito}), or when $X$ has isolated singularities (\cite[Theorem 3.20]{FL-Saito}). Along these lines, we prove:

\begin{thmx}\label{theorem:pre-k-rational-implies-pre-k-DB}
Let $X$ be a normal variety with pre-$k$-rational singularities. Then $X$ has pre-$k$-Du Bois singularities.
\end{thmx}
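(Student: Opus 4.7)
The plan is to extend Kov\'acs's classical argument (rational singularities are Du Bois) to higher $p$, by exhibiting $\DB^p_X$ as a direct summand of $\DD_X(\DB^{n-p}_X)$ in the derived category. Once such a splitting is in place, the pre-$k$-rationality hypothesis $\H^i\DD_X(\DB^{n-p}_X) = 0$ for $i > 0$ transfers directly to the summand $\DB^p_X$, yielding pre-$k$-Du Bois.

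For the base case $p = 0$, the identification $\DB^n_X \cong \bR f_*\omega_{\tilde X}$ (for a resolution $f: \tilde X \to X$) combined with Grothendieck duality gives $\DD_X(\DB^n_X) \cong \bR f_*\O_{\tilde X}$. Thus pre-$0$-rational is equivalent to $R^i f_*\O_{\tilde X} = 0$ for $i > 0$; since $X$ is normal, this recovers classical rationality, and Kov\'acs's theorem then gives $\O_X \xrightarrow{\sim} \DB^0_X$, i.e., pre-$0$-Du Bois.

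For $p \ge 1$, I would fix a strong log resolution $f: \tilde X \to X$ with SNC exceptional divisor $E$ and use the natural map $\alpha_p: \DB^p_X \to \DD_X(\DB^{n-p}_X)$ coming from the pairing $\DB^p_X \otimes \DB^{n-p}_X \to \omega_X^\bullet[-n]$ (an isomorphism on the smooth locus, where it recovers the wedge-product pairing between $\Omega^p$ and $\Omega^{n-p}$). The aim is to build a retraction $\beta_p: \DD_X(\DB^{n-p}_X) \to \DB^p_X$ with $\beta_p \circ \alpha_p = \mathrm{id}$, using trace maps for log differentials on $\tilde X$ together with canonical comparisons between $\DB^p_X$, $\DD_X(\DB^{n-p}_X)$, and $\bR f_*\Omega^p_{\tilde X}(\log E)$ via Grothendieck duality on $\tilde X$.

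The hard part will be constructing $\beta_p$ for $p \ge 1$. In the $p = 0$ case the trace map $\bR f_*\O_{\tilde X} \to \O_X$ is classical and well-behaved, but the analogous trace for log differentials is subtler, and the identification $\DD_X(\DB^{n-p}_X) \cong \bR f_*\Omega^p_{\tilde X}(\log E)$ one would like to invoke is known only when $p < \codim_X X_{\sing}$ (see the footnote to Definition \ref{definition:pre-k-DB-rational}), so additional care is needed outside this range. Normality of $X$ enters both to identify the relevant trace maps with $\O_X$ on the regular locus and to control their extension across $X_{\sing}$ in codimension at least two, which I expect to be the technically most involved step.
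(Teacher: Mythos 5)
There is a genuine gap, and it sits exactly where you have placed the ``hard part'': the retraction $\beta_p$ with $\beta_p \circ \alpha_p = \mathrm{id}_{\DB^p_X}$ cannot be constructed by trace maps or any local/formal device, because its existence is essentially \emph{equivalent} to the theorem you are trying to prove. Under the pre-$k$-rational hypothesis, $\DD_X(\DB^{n-p}_X)$ is a sheaf $\mathcal G$ concentrated in degree $0$. If $\alpha_p$ admitted a retraction, $\DB^p_X$ would be a direct summand of $\mathcal G$ in the derived category, forcing $\tau_{\ge 1}\DB^p_X$ to be a summand of $\tau_{\ge 1}\mathcal G = 0$; conversely, if the theorem holds then $\alpha_p$ is a quasi-isomorphism (its $\H^0$ is an isomorphism by Kebekus--Schnell) and the retraction exists trivially. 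So deferring the construction of $\beta_p$ defers the entire content of the theorem. Note also that the natural candidate $\beta_p = \DD_X(\alpha_{n-p})$ does exist, and $\beta_p\circ\alpha_p$ is the identity on the smooth locus, but there is no formal reason it is the identity globally --- and there cannot be, since the statement genuinely requires transcendental input (degeneration of the Hodge--Du Bois spectral sequence), which your argument has no place to absorb.

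The direction of the splitting in the Kov\'acs strategy, and in the paper, is the opposite of yours: one splits $\H^0\DB^p_X \to \DB^p_X$, not $\DB^p_X \to \DD_X(\DB^{n-p}_X)$. Concretely, normality plus Kebekus--Schnell give $\H^0\DB^p_X \cong f_*\Omega^p_{\tilde X}(\log E) \cong \H^0(\DD_X(\DB^{n-p}_X))$, and pre-$k$-rationality makes the target a sheaf, so the composition $\H^0\DB^p_X \to \DB^p_X \to \DD_X(\DB^{n-p}_X)$ is a quasi-isomorphism and $\H^0\DB^p_X \to \DB^p_X$ is a split monomorphism. But a split monomorphism is not yet a quasi-isomorphism, and upgrading it requires the second half of the argument that your proposal omits entirely: the surjectivity $\HH^i(X,\Omega^{\le p}_{X,h}) \twoheadrightarrow \HH^i(X,\DB^{\le p}_X)$ coming from the $E_1$-degeneration on a compactification, an excision/local-cohomology argument to strip off the boundary, and an induction on dimension via general hyperplane sections (Theorem \ref{theorem:gen-hyperplane-sections-pre-k-DB}) to reduce the locus where $\H^0\DB^p_X \to \DB^p_X$ fails to be an isomorphism to dimension zero, where the hypercohomology isomorphism finally forces the cone to vanish. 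Your base case $p=0$ is fine (it is Kov\'acs's theorem quoted as a black box), and your caution about $\DD_X(\DB^{n-p}_X)$ versus $\bR f_*\Omega^p_{\tilde X}(\log E)$ outside the range $p < \codim_X(X_{\sing})$ is well placed --- the paper avoids this by working only with the $\H^0$-level identification of Lemma \ref{lemma:h0-dual-and-log-differentials}(2), valid for all $p$ on a normal variety --- but as written the proposal for $p\ge 1$ reduces the theorem to an unprovable (because equivalent) splitting statement.
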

 
As a corollary, we obtain 

\begin{corollaryx} \label{corollary:k-rational-implies-k-DB}
Let $X$ be a variety with strict $k$-rational singularities. Then $X$ has strict $k$-Du Bois singularities.  
\end{corollaryx}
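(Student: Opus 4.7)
My plan is to reduce to Theorem \ref{theorem:pre-k-rational-implies-pre-k-DB} and then upgrade the \emph{pre-} conclusion to the \emph{strict-} one via the factorization of the identification in \eqref{def:old-k-rat}. Strict-$k$-rational trivially implies pre-$k$-rational, since the isomorphism $\Omega^p_X \xrightarrow{\sim} \DD_X(\DB^{n-p}_X)$ forces the higher cohomologies of the target to vanish. Taking $p=0$ in \eqref{def:old-k-rat} yields $\mathcal{O}_X \simeq \DD_X(\DB^n_X)$, which by Grothendieck duality applied to a resolution $f:\tilde{X}\to X$ is the standard condition $\mathcal{O}_X \simeq \bR f_*\mathcal{O}_{\tilde{X}}$ for rational singularities; in particular $X$ is normal. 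Theorem \ref{theorem:pre-k-rational-implies-pre-k-DB} therefore applies, giving pre-$k$-Du Bois, so $\DB^p_X$ is concentrated in cohomological degree $0$ for $0 \le p \le k$.

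To obtain strict-$k$-Du Bois I would exploit the canonical factorization
\[
\Omega^p_X \xrightarrow{\alpha_p} \DB^p_X \xrightarrow{\beta_p} \DD_X(\DB^{n-p}_X),
\]
where $\beta_p$ arises from the wedge pairing $\DB^p_X \otimes^{\bL} \DB^{n-p}_X \to \DB^n_X$ combined with the trace to $\omega_X^\bullet[-n]$. Under pre-$k$-Du Bois and pre-$k$-rational, both source and target of $\beta_p$ are coherent sheaves, and by the strict-$k$-rational hypothesis $\beta_p \circ \alpha_p$ is an isomorphism. Consequently $\alpha_p$ is a split monomorphism and $\beta_p$ a split epimorphism of coherent sheaves, giving $\DB^p_X \simeq \Omega^p_X \oplus K_p$ with $K_p := \ker \beta_p$. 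On the smooth locus all three sheaves agree, so $K_p$ is supported on $X_{\mathrm{sing}}$.

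The remaining task is to show $K_p = 0$, which I expect to be the main obstacle. Since $X$ is normal, $X_{\mathrm{sing}}$ has codimension at least $2$, so it suffices to establish that the sheaf $\DB^p_X$ is torsion-free (or, more strongly, reflexive) for $p \le k$. I would try to obtain this either from a structural purity property of Du Bois complexes under pre-$k$-Du Bois, or directly: using that $\Omega^p_X \simeq \DD_X(\DB^{n-p}_X)$ is automatically reflexive (as a derived Grothendieck dual) and comparing $\DB^p_X$ with the pushforward $j_* j^* \DB^p_X = j_* \Omega^p_{X_{\mathrm{sm}}}$ along the inclusion $j: X_{\mathrm{sm}} \hookrightarrow X$. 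Once torsion-freeness is in hand, the summand $K_p$, supported in codimension $\ge 2$, must vanish, making $\alpha_p$ (and $\beta_p$) an isomorphism and yielding strict-$k$-Du Bois.
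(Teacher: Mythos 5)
Your reduction to Theorem \ref{theorem:pre-k-rational-implies-pre-k-DB} and the subsequent splitting argument are set up correctly, and they parallel the paper's proof up to the last step: after invoking the theorem, everything comes down to showing that the split monomorphism $\Omega^p_X \to \H^0\DB^p_X$ has no complementary summand $K_p$ supported on $X_{\sing}$. But you explicitly leave the decisive input --- torsion-freeness (or reflexivity) of $\H^0\DB^p_X$ --- unproved, and neither of your two suggested routes closes it. Reflexivity of $\H^0(\DD_X(\DB^{n-p}_X))$ says nothing about the \emph{middle} term of the factorization; what you actually need is injectivity of the natural map $\H^0\DB^p_X \to j_*\Omega^p_{X_{\rm sm}}$, and this is not formal: it is the identification of $\H^0\DB^p_X$ with the sheaf of $h$-differentials for seminormal varieties \cite[Theorem 7.12]{HJ14}, combined with the Kebekus--Schnell theorems \cite[Corollaries 1.8, 1.12]{Kebekus-Schnell} which, for rational singularities, identify both $\H^0\DB^p_X$ and $f_*\Omega^p_{\tilde X}(\log E)\cong \H^0(\DD_X(\DB^{n-p}_X))$ with $\Omega^{[p]}_X$ \emph{compatibly with the natural maps}. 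This is precisely the content of Remark \ref{remark:comparing-various-differentials}, which the paper cites at this point: it shows directly that the composite $\Omega^p_X \to \H^0\DB^p_X \to \H^0(\DD_X(\DB^{n-p}_X))$ being an isomorphism forces the first arrow to be one (equivalently, in your language, that $\beta_p$ is injective on $\H^0$, so $K_p=0$). So the skeleton of your argument is sound and essentially the paper's, but the step you flag as ``the main obstacle'' is a genuine gap requiring these external theorems, not something recoverable by a purity or $j_*j^*$ comparison alone.

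One small additional remark: your observation that the $p=0$ case of (\ref{def:old-k-rat}) gives $\O_X \simeq \bR f_*\O_{\tilde X}$, hence normality and rational singularities, is correct and is exactly what licenses the application of both Theorem \ref{theorem:pre-k-rational-implies-pre-k-DB} and Remark \ref{remark:comparing-various-differentials}.
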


Using Theorem \ref{theorem:pre-k-rational-implies-pre-k-DB}, one can show that the Hodge-Du Bois numbers of a normal variety with pre-$k$-rational singularities enjoy partial Hodge symmetry (Corollary \ref{cor:Hodge-symmetry}). Moreover, the same techniques of the proof of Theorem \ref{theorem:pre-k-rational-implies-pre-k-DB} can be used to study the behavior of pre-$k$-Du Bois singularities under finite maps (Proposition \ref{proposition-boutot-pre-k-DB}). Using this, we obtain a hyperresolution-free proof of the classical result \cite[Theorem 5.3]{DB} that describes the Du Bois complex for (finite) quotient singularities (Corollary \ref{cor:quotient-singularities-pre-n-DB}). 
 
We now proceed to discuss notions that take into account the behavior of the $0$-th cohomology sheaves. Despite the validity of the results of Corollary \ref{corollary:k-rational-implies-k-DB}, we stress that the notions of strict $k$-Du Bois and strict $k$-rational singularities are too restrictive for non-lci varieties, and need not be the right definitions of $k$-Du Bois and $k$-rational singularities in general. We propose instead the following new definitions. 

\begin{definition} 
Let $X$ be a variety. $X$ has \textbf{$k$-Du Bois singularities} if it is seminormal, and 
\begin{enumerate}
\item $\codim_X (X_{\sing}) \ge 2k+1$;
\item $X$ has pre-$k$-Du Bois singularities;
\item $\H^0\DB_X^p$ is reflexive, for all $p \le k$.
\end{enumerate}
\end{definition}

\begin{definition}
Let $X$ be a variety. $X$ is said to have \textbf{$k$-rational singularities} if it is normal, and
\begin{enumerate}
\item $\codim_X (X_{\sing}) > 2k+1$;
\item For all $i>0$ and $0\le p\le k$, 
\[ R^if_*\Omega^p_X(\log E) = 0\]
for any strong log resolution $f:\tilde X \to X$. 
\end{enumerate}
\end{definition}
\noindent By \cite[Lemma 3.14]{FL-Saito}, condition $(2)$ can be replaced by:


\begin{flushleft}
\hspace{5mm}   $(2')$ $X$ has pre-$k$-rational singularities.
\end{flushleft}
One important feature of $k$-rational and $k$-Du Bois singularities in the lci case is that they imply bounds on the codimension of the singular locus (\cite[Corollary 9.26]{MP-local-cohomology}, \cite[Corollary 1.3]{CDM-k-rational}), which are useful in applications, (see e.g. Remark \ref{remark:new-notions}). In general, such codimension bounds are not guaranteed, and so we include them as part of the definitions. One verifies that in the lci case, the above definitions agree with the well-studied notions of strict $k$-Du Bois and strict $k$-rational singularities (Proposition \ref{proposition: def-equiv-lci}), and extend the classical notions of Du Bois and rational singularities when $k=0$ (Proposition \ref{proposition:k-DB-rational-0}). Moreover, the stability under general hyperplane sections, and the implication $k$-rational $\implies$ $k$-Du Bois continue to hold. 
\begin{thmx}\label{corollary:new-k-rat-implies-new-k-DB}
Let $X$ be a quasi-projective variety. We have
\begin{enumerate}[label = (\alph*)]
    \item If $X$ has rational and $k$-Du Bois (resp. $k$-rational) singularities, then so does a general hyperplane section $H$ of $X$.
    \item If $X$ has $k$-rational singularities, then it has $k$-Du Bois singularities.
\end{enumerate}
\end{thmx}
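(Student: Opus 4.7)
The plan is to reduce both parts to Theorems \ref{theorem:gen-hyperplane-sections-pre-k-DB} and \ref{theorem:pre-k-rational-implies-pre-k-DB}, supplementing them with Bertini-type arguments for the normality/seminormality and codimension conditions, and a separate reflexivity verification.

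For part (a), suppose $X$ is rational and $k$-Du Bois (the $k$-rational case is entirely parallel). Theorem \ref{theorem:gen-hyperplane-sections-pre-k-DB} produces a general hyperplane $H$ with pre-$k$-Du Bois singularities. Normality of $X$ (from rationality) gives via Bertini that $H$ is normal, hence seminormal. Transversality of $H$ to each stratum of $X_{\sing}$ forces $H_{\sing}\subseteq X_{\sing}\cap H$ and
\[ \codim_H H_{\sing}\;\geq\;\codim_X X_{\sing}\;\geq\; 2k+1. \]
It then remains to show that $\H^0\DB_H^p$ is reflexive for $p\leq k$. I would use (i) that $H$ inherits rational (in particular Cohen--Macaulay) singularities from $X$ via an Elkik--Flenner-type Bertini theorem, (ii) a residue-type distinguished triangle relating $\DB_X^p$, its log-along-$H$ variant, and $\DB_H^{p-1}$, and (iii) the pre-$k$-Du Bois vanishings on both sides, to compare $\H^0\DB_H^p$ with the restriction $(\H^0\DB_X^p)|_H$. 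Reflexivity of the resulting sheaf on $H$ then follows from an $S_2$-argument at codimension $\geq 2$ points.

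For part (b), let $X$ be $k$-rational. Then $X$ is normal (hence seminormal) and $\codim_X X_{\sing}>2k+1$ in particular implies $\geq 2k+1$, so the first two conditions of the $k$-Du Bois definition hold, and Theorem \ref{theorem:pre-k-rational-implies-pre-k-DB} yields the pre-$k$-Du Bois condition. For reflexivity of $\H^0\DB_X^p$ with $p\leq k$, I would invoke the identification $\DB_X^p\cong \bR f_*\Omega_{\tilde X}^p(\log E)$ available under the pre-$k$-rational hypothesis (cf.\ \cite[Lemma 3.14]{FL-Saito}); combined with the pre-$k$-rational vanishings $R^if_*\Omega_{\tilde X}^p(\log E)=0$ for $i>0$, this collapses to
\[ \H^0\DB_X^p\;\cong\; f_*\Omega_{\tilde X}^p(\log E). \]
The right-hand sheaf is torsion-free (agreeing with $\Omega^p$ on the smooth locus), and an $S_2$-check at codimension-$\geq 2$ points of $X_{\sing}$ via local cohomology together with the higher-direct-image vanishing yields depth $\geq 2$, hence reflexivity on the normal base $X$.

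The principal obstacle in both parts is the reflexivity verification. In part (a), pinning down the compatibility between $\H^0\DB$ on $X$ and on $H$ is delicate, since reflexivity is not preserved under hyperplane restriction in general; one must leverage both the Cohen--Macaulayness coming from rational singularities on $X$ and the full force of the pre-$k$-Du Bois vanishing on $X$ and $H$ to close the gap. In part (b), the $S_2$-estimate for $f_*\Omega_{\tilde X}^p(\log E)$ requires careful local cohomology bookkeeping, and this is precisely where the strict codimension bound $>2k+1$ (as opposed to $\geq 2k+1$) in the definition of $k$-rational plays an essential role.
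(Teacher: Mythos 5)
Your overall reduction -- feed Theorems \ref{theorem:gen-hyperplane-sections-pre-k-DB} and \ref{theorem:pre-k-rational-implies-pre-k-DB} into the definitions and then check seminormality/normality, the codimension bound, and reflexivity separately -- is exactly the paper's skeleton, and the first two checks are handled the same way. The genuine gap is in the reflexivity step, in both parts. In part (a) you propose to compare $\H^0\DB_H^p$ with $(\H^0\DB_X^p)|_H$ via a residue-type triangle and then run an $S_2$ argument, and you yourself concede that this does not close (restriction of a reflexive sheaf to a hyperplane need not be reflexive). In part (b) you correctly reduce to showing $f_*\Omega^p_{\tilde X}(\log E)$ is reflexive, but the claim that depth $\ge 2$ along $X_{\sing}$ follows from ``careful local cohomology bookkeeping'' together with $R^if_*\Omega^p_{\tilde X}(\log E)=0$ is not a formal consequence of those vanishings: the normality (in Serre's sense) of $f_*\Omega^p_{\tilde X}(\log E)$ is the content of the extension theorem for differential forms, a deep result, not something one extracts from the Leray/local-cohomology spectral sequences. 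The strict bound $\codim_X(X_{\sing})>2k+1$ plays no role here either.

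The missing ingredient is Remark \ref{remark:comparing-various-differentials} (i.e.\ Kebekus--Schnell together with \cite[Theorem 7.12]{HJ14}): on any variety with rational singularities, the natural maps $\H^0\DB^p\to \Omega^{[p]}$ and $f_*\Omega^p(\log E)\to\Omega^{[p]}$ are isomorphisms, and $\Omega^{[p]}$ is reflexive by definition. This makes the reflexivity condition automatic once one knows the relevant variety has rational singularities. In part (b), $k$-rational implies $0$-rational, hence rational by Proposition \ref{proposition:k-DB-rational-0}, so $\H^0\DB_X^p$ is reflexive for all $p$ with no further work. In part (a), the rationality hypothesis on $X$ passes to a general hyperplane section $H$ (a fact you already invoke), and then the same remark applied to $H$ gives reflexivity of $\H^0\DB_H^p$ directly, with no comparison to $X$ needed. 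So your proof is repairable, but only by importing this external theorem; the ad hoc $S_2$ arguments as written would fail.
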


Significantly, there is an abundance of varieties with $k$-Du Bois or $k$-rational singularities as defined above. We characterize these notions for toric varieties and cones over smooth projective varieties, which are typically non-lci (for more concrete examples of cones over Veronese varieties and K3 surfaces, see Example \ref{ex:cone-over-Veronese-and-K3}).

\begin{propositionx}\label{criteria:toric-varieties}
Let $X$ be an affine toric variety and let $c := \codim_X(X_{\sing})$. Then:
    \begin{enumerate}
        \item $X$ has pre-$k$-Du Bois singularities for all $k \geq 0$. It has $k$-Du Bois singularities for all $0\le k\le \frac{c-1}{2}$.
        \item If $X$ is simplicial, then it has pre-$k$-rational singularities for all $k \geq 0$. It has $k$-rational singularities for all $0\le k<\frac{c-1}{2}$.
        \item If $X$ is non-simplicial, then it does not have pre-$1$-rational singularities, hence it does not have $1$-rational singularities.
    \end{enumerate}
\end{propositionx}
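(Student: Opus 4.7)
The plan is to reduce the entire proposition to Danilov's computation of the Du Bois complex and of pushforwards of log differentials on affine toric varieties. Recall that for any affine toric $X$, Danilov showed that $\DB_X^p$ is quasi-isomorphic to a coherent reflexive sheaf $\widetilde{\Omega}_X^p$ placed in degree zero; moreover, $\widetilde{\Omega}_X^p$ agrees with the Zariski differentials $\Omega_X^{[p]} = j_* \Omega_U^p$ whenever $X$ is simplicial, where $j \colon U = X \setminus X_{\sing} \hookrightarrow X$ denotes the inclusion of the smooth locus.

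For part (1), pre-$k$-Du Bois is immediate: the concentration of $\DB_X^p$ in degree zero yields $\H^i \DB_X^p = 0$ for all $i > 0$ and all $p \geq 0$. Since affine toric varieties are normal (in particular seminormal), $\H^0 \DB_X^p \cong \widetilde{\Omega}_X^p$ is reflexive, and the hypothesis $c \geq 2k+1$ supplies the codimension bound; all defining conditions of $k$-Du Bois are then satisfied for $k \leq (c-1)/2$.

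For part (2), I would invoke Danilov's vanishing for simplicial toric varieties: for any toric log resolution $f \colon \tilde X \to X$ with reduced exceptional divisor $E$, the canonical morphism $\Omega_X^{[p]} \to \bR f_* \Omega_{\tilde X}^p(\log E)$ is a quasi-isomorphism. This gives $R^i f_* \Omega_{\tilde X}^p(\log E) = 0$ for all $i > 0$ and $p$, and via Remark \ref{remark: equiv-def-pre-k-rat} yields pre-$k$-rational for every $k \geq 0$. Normality together with the codimension bound $c > 2k+1$ then upgrades this to $k$-rational for $k < (c-1)/2$.

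Part (3) is the crux. Here I would show that if $\sigma$ is non-simplicial then $R^1 f_* \Omega_{\tilde X}^1(\log E) \neq 0$. Since the condition is local, I may pass to a torus-invariant affine neighbourhood of a non-simplicial singular point. Invoking Theorem \ref{theorem:gen-hyperplane-sections-pre-k-DB} on general hyperplane section stability, I would reduce to a three-dimensional affine toric variety whose cone has a non-simplicial face, the archetypal example being the conifold $V(xy - zw) \subset \AA^4$. The core task is then an explicit cohomology computation: on a toric log resolution obtained by star-subdivision of $\sigma$, the exceptional divisor is a toric surface (in the conifold case, $\PP^1 \times \PP^1$), and the toric Euler sequence for log differentials gives $R^1 f_* \Omega_{\tilde X}^1(\log E) \neq 0$. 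The hard part is this final calculation; equivalently, the failure can be framed as the breakdown of Danilov's quasi-isomorphism $\widetilde{\Omega}_X^1 \to \bR f_* \Omega_{\tilde X}^1(\log E)$ outside the simplicial case.
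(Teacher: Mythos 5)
Part (1) of your argument is correct and is what the paper does (Danilov/GNPP: $\DB_X^p\simeq\Omega_X^{[p]}$ in degree zero, plus normality and the imposed codimension bound). The problems are in (2) and (3).

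For (2), the ``Danilov vanishing'' you invoke is false as stated: for a \emph{singular} simplicial affine toric variety one has $R^{c-1}f_*\Omega^c_{\tilde X}(\log E)\neq 0$ with $c=\codim_X(X_{\sing})$ (this is \cite[Theorem 1.1(1)]{toric-paper}, recorded in Remark \ref{remark:two-defn-k-rational-distinct} precisely to warn that $\bR f_*\Omega^p_{\tilde X}(\log E)$ and $\DD_X(\DB_X^{n-p})$ differ for simplicial toric varieties). So your route cannot give pre-$k$-rationality for all $k$; it only works in the range $p<c$ where Lemma \ref{lemma:h0-dual-and-log-differentials}(1) identifies the two objects, which does salvage the ``$k$-rational for $k<\frac{c-1}{2}$'' half. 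The paper instead verifies Definition \ref{definition:pre-k-DB-rational} directly: Danilov's duality $\Omega^{[p]}_X\xto{\sim}\RHom_{\O_X}(\Omega^{[n-p]}_X,\Omega^{[n]}_X)$ for simplicial $\sigma$, together with Cohen--Macaulayness (so $\omega_X^\bullet$ is $\Omega^{[n]}_X$ up to shift), shows $\DD_X(\DB_X^{n-p})$ is the sheaf $\Omega^{[p]}_X$ in degree zero, whence $\H^i(\DD_X(\DB_X^{n-p}))=0$ for $i>0$ and \emph{all} $p$.

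For (3), the reduction to the conifold is not sound: a general hyperplane section of a toric variety is not toric, so cutting does not land you back in the toric world; and even restricting to a minimal non-simplicial face only gets you to an arbitrary full-dimensional non-simplicial cone (in dimension $3$, any cone over an $m$-gon with $m\ge 4$), not to $V(xy-zw)$ specifically. The actual content needed is the non-vanishing $R^1f_*\Omega^1_{\tilde X}(\log E)\neq 0$ for every non-simplicial $\sigma$, which you defer as ``the hard part''; the paper imports it from \cite[Theorem 1.1(2)]{toric-paper} and combines it with Lemma \ref{lemma:h0-dual-and-log-differentials}(1) (valid here since $p=1<c$). Without that input, or a complete computation covering all non-simplicial cones, part (3) is not proved.
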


\begin{propositionx}
Let $X$ be a smooth projective variety of dimension $n$, and $L$ an ample line bundle on $X$. Let \[C(X,L)=\Spec \bigoplus_{m\ge 0} H^0(X,L^m)\]
be the affine cone over $X$ with conormal bundle $L$. Then:
\begin{enumerate}
\item $C(X,L)$ is pre-$k$-Du Bois if and only if 
\[H^i(X,\Omega^p_X\otimes L^m)=0 \text{ for all } i>0, p\le k, m\ge 1.\]
\item $C(X,L)$ is $k$-Du Bois if and only if it is pre-$k$-Du Bois, $k\le n/2$ and
\[H^i(X,\O_X)=0 \text{ for all } 0<i\le k.\] 
\item For $k \leq n$, the cone $C(X,L)$ is pre-$k$-rational if and only if
\[H^i(X,\Omega^p_X\otimes L^m)=0 \text{ for all } i>0, p\le k, m\ge 0,\]
except possibly when $m=0, i=p$, in which case
\[H^0(X,\O_X)\overset{\simeq}{\longrightarrow} H^1(X,\Omega_X^1)\overset{\simeq}{\longrightarrow}\cdots \overset{\simeq}{\longrightarrow} H^k(X,\Omega_X^k)\]
are isomorphisms via the map $c_1(L)$.
If $C(X,L)$ is pre-$n$-rational, then it is pre-$(n+1)$-rational.
\item $C(X,L)$ is $k$-rational if it is pre-$k$-rational and $k<n/2$.
\end{enumerate}
\end{propositionx}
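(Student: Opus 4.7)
The plan is to study the natural resolution $f: Y := \mathrm{Tot}(L^{-1}) \to C(X,L)$ that contracts the zero section $E = X \hookrightarrow Y$ to the vertex; this is a strong log resolution with smooth exceptional divisor $E$. Two reductions drive the argument: since $C$ is affine and $f$ is proper birational with $0$-dimensional exceptional fiber, $R^if_*\F = 0 \Leftrightarrow H^i(Y,\F) = 0$ for any coherent $\F$ on $Y$; and since $\pi : Y \to X$ is affine, $H^i(Y,\F) = H^i(X, \pi_*\F)$. The fiber-scaling $\GG_m$-action then decomposes each pushforward into weight-$m$ pieces ($m \ge 0$), which I will analyze one weight at a time.

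Combining the relative cotangent sequence on $Y$ (where $\Omega^1_{Y/X} \cong \pi^*L$ carries weight $1$) with the residue sequence for log forms along $E$, the weight-$m$ piece of $\pi_*\Omega^p_Y(\log E)$ fits into
\[ 0 \to \Omega^p_X \otimes L^m \to (\pi_*\Omega^p_Y(\log E))_m \to \Omega^{p-1}_X \otimes L^m \to 0 \]
for $m \ge 1$, while the weight-$0$ piece sits in
\[ 0 \to \Omega^p_X \to (\pi_*\Omega^p_Y(\log E))_0 \to \Omega^{p-1}_X \to 0 \]
whose extension class is identified, via the Atiyah class of $L$, with cup product by $c_1(L) \in H^1(X,\Omega^1_X)$. (For $\pi_*\Omega^p_Y$ without log, the weight-$0$ piece collapses to $\Omega^p_X$.) Taking cohomology on $X$ and using that $\H^i\DB^p_C$ at the vertex records precisely the positive-weight contribution, (1) follows. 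For (3) (with $k \le n$, making the $R^if_*\Omega^p_Y(\log E) = 0$ formulation valid by the footnote), the $m \ge 1$ pieces give the stated vanishing, while the long exact sequence for the weight-$0$ extension forces the connecting maps $\delta_i = \cup\, c_1(L) : H^{i-1}(X,\Omega^{p-1}_X) \to H^i(X,\Omega^p_X)$ to be isomorphisms, yielding the iso chain $H^0(X,\O_X) \xrightarrow{\sim} \cdots \xrightarrow{\sim} H^k(X,\Omega^k_X)$. The implication pre-$n$-rational $\Rightarrow$ pre-$(n+1)$-rational follows from Grothendieck duality: combining pre-$n$-rational with Kodaira vanishing gives Kempf's criterion $H^i(X, L^m) = 0$ for $0 < i < n$ and all $m \in \ZZ$, making $C$ Cohen-Macaulay; the extra condition at $p = n+1$ is $\H^i\DD_C(\DB^0_C) = 0$ for $i>0$, which then reduces to $\H^i(\omega_C) = 0$ and holds automatically.

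For (2) and (4), the codimension bounds $2k+1 \le n+1$ and $2k+1 < n+1$ rewrite as $k \le n/2$ and $k < n/2$; normality and seminormality are automatic for cones over smooth projective varieties. Reflexivity of $\H^0\DB^p_C$ in (2) is translated via local cohomology at the vertex — comparing $\H^0\DB^p_C$ with its restriction to $C^\circ = C \setminus \{0\} \cong Y \setminus E$ through the $\GG_m$-weight decomposition — into the vanishing $H^i(X,\O_X) = 0$ for $0 < i \le k$. The main obstacle will be the precise identification of the weight-$0$ extension class with $c_1(L)$ via the Atiyah class of $L$, and the translation of reflexivity of $\H^0\DB^p_C$ at an isolated cone vertex into the stated $H^i(\O_X)$ vanishing; both require careful bookkeeping of the weight structure and, for the latter, a comparison of Kähler-theoretic data on $C$ with that on $Y \setminus E$.
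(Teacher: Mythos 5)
Your overall strategy is the same as the paper's: the paper also works with the resolution $\tilde Z=\mathbf{Spec}_X\bigoplus_{m\ge0}L^m\to C(X,L)$, decomposes $\pi_*\Omega^p_{\tilde Z}$ into weight pieces via the split sequence $0\to\bigoplus_{m\ge0}\Omega^p_X\otimes L^m\to\pi_*\Omega^p_{\tilde Z}\to\bigoplus_{m\ge1}\Omega^{p-1}_X\otimes L^m\to0$, extracts $\H^i\DB^p_Z$ from the blow-up triangle as exactly the positive-weight part, and for the pre-$k$-rational criterion identifies the connecting map of the residue sequence with cup product by $c_1(L)$ (your Atiyah-class phrasing of the weight-zero extension is the same computation); as in the paper, turning ``weight-zero cohomology vanishes'' into the clean chain of isomorphisms $H^0(\O_X)\xto{\sim}\cdots\xto{\sim}H^k(\Omega^k_X)$ requires the induction on $p$ that kills $H^i(X,\Omega^{p-1}_X)$ for $i\ne p-1$, which you should make explicit. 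Where you genuinely diverge is the step ``pre-$n$-rational $\Rightarrow$ pre-$(n+1)$-rational'': the paper runs a diagram chase comparing $\DD_Z(\DB^0_Z)$ with $\bR f_*\omega_{\tilde Z}(X)$ via Grauert--Riemenschneider, whereas you observe that normal plus pre-$0$-rational already gives rational, hence Cohen--Macaulay and Du Bois singularities, so $\DD_C(\DB^0_C)\cong\omega_C$ sits in degree $0$ and the $p=n+1$ condition is automatic; this is cleaner (and the detour through Kodaira vanishing and Kempf's criterion is not even needed).

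The one real gap is the reflexivity criterion in part (2), which you flag but do not close. The paper's route is concrete and you will need its two external inputs: first, $f_*\Omega^p_{\tilde Z}\cong\Omega^{[p]}_Z$ for $p\le\dim Z-2$ (Steenbrink--van Straten), so reflexivity of $\H^0\DB^p_Z$ is equivalent to the map $\H^0\DB^p_Z\to f_*\Omega^p_{\tilde Z}$ being an isomorphism; second, comparing global sections, these two graded modules differ exactly by the weight-zero summand $H^0(X,\Omega^p_X)$, so reflexivity holds iff $H^0(X,\Omega^p_X)=0$, which becomes $H^p(X,\O_X)=0$ only after invoking Hodge symmetry on the smooth projective $X$. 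Your proposed local-cohomology comparison of $\H^0\DB^p_C$ with $j_*\Omega^p_{C^\circ}$ would reach the same place, but without the identification of $\Omega^{[p]}_C$ with $f_*\Omega^p_{\tilde Z}$ (or an equivalent control of $j_*$ from the punctured cone) the ``careful bookkeeping'' you defer is precisely the content of the statement.
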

More examples of $k$-Du Bois and $k$-rational singularities in this sense are given by secant varieties, see \cite{secant}.

Note that, unlike in the lci case, it need not be true that $k$-Du Bois singularities are $(k-1)$-rational (see Remark \ref{remark:-k-Du-Bois-but-not-k-1-rational}). In view of such phenomena, it is possible that these notions may require further small adjustments; nonetheless, we intend them as the basis for the development of a general theory.


\textbf{Outline of the paper.} In Section \ref{sec:preliminaries}, we review the construction of the Du Bois complex, and list some well-known facts about it, as well as other preliminaries for the subsequent discussion. In Section \ref{section:hyperplane-cutting-argument}, we discuss the behavior of pre-$k$-Du Bois and pre-$k$-rational singularities under general hyperplane sections. In Section \ref{section:k-rational-singularity-and-k-Du-Bois}, we prove our main theorem that pre-$k$-rational (resp. strict $k$-rational) singularities are pre-$k$-Du Bois (resp. strict $k$-Du Bois). In Section \ref{section:definitions}, we propose new definitions of $k$-Du Bois and $k$-rational singularities in general, and explain how the results in previous sections carry over without much difficulty. In the final two sections \ref{section:toric-varieties} and \ref{section:cones}, we study the examples of toric varieties and cones over smooth projective varieties. 

\textbf{Acknowledgements.} We would like to express our sincere gratitude to Mircea Musta\c{t}\u{a} and Mihnea Popa for their constant support during the preparation of this paper, and in particular for suggestions regarding the definitions in section \ref{section:definitions}. We are grateful to Sándor Kovács and Radu Laza for their insightful comments on our draft. We also thank Joaqu\'in Moraga, Sung Gi Park, Benjamin Tighe and Jakub Witaszek for helpful discussions.

\section{Preliminaries}\label{sec:preliminaries}
Throughout the paper, by a variety we mean a reduced, separated scheme of finite type over $\CC$ (not necessarily irreducible). A resolution of singularities $f:\tilde X \to X$ is said to be a \textit{strong log resolution} with exceptional divisor $E$ if it is an isomorphism away from the singular locus $X_{\sing}$, and $E := f^{-1}(X_{\sing})_{\text{red}}$ is a simple normal crossing divisor. 
\subsection{The Du Bois complex}
Let us recall the definition of the Du Bois complex. For a more detailed treatment, we refer to \cite[Section 7.3]{peters-steenbrink} (see also \cite{DB}, \cite{GNPP}).

Let $X$ be a variety of dimension $n$. Following the idea of Deligne, Du Bois introduced in \cite{DB} the \textit{filtered de Rham complex}, or the \textit{Du Bois complex} $(\DB_X^{\bullet},F)$, which is an object in the bounded derived category of filtered differential complexes. The graded pieces with respect to the filtration $F$
\[\DB^p_X \coloneqq gr^p_F \DB^\bullet_X[p]\]
are objects in $D^b_{\rm coh}(X)$. It can be constructed explicitly as follows: 
\begin{enumerate}
\item Construct a simplicial resolution $\epsilon: X_{\bullet}\to X$ of $X$ using either cubical \cite{GNPP} or polyhedral \cite{Carlson-poly-res} resolutions. The simplicial resolution constructed using the method \textit{loc.cit} has the property that $\dim X_i \le n-i$. In particular, it has length $\le n$. 
\item The $p$-th graded piece $\DB_X^p:=\bR \epsilon_{\bullet *}\Omega^p_{X_{\bullet}}$ of the Du Bois complex can be represented by the single complex associated to the double complex
\[\begin{tikzcd}
&\, &0 &0 &\, &0 &\, \\
&0\ar[r] &\epsilon_*\A^{p,0}_{X_n} \ar[u]\ar[r]&\epsilon_*\A^{p,1}_{X_n}\ar[u]\ar[r] &\cdots \ar[r] &\epsilon_*\A^{p,n-p}_{X_n}\ar[r]\ar[u] &0 \\
&\, &\vdots \ar[u]&\vdots\ar[u] &\, &\vdots\ar[u] &\, \\
&0\ar[r] &\epsilon_*\A^{p,0}_{X_1} \ar[u]\ar[r]&\epsilon_*\A^{p,1}_{X_1}\ar[u]\ar[r] &\cdots \ar[r] &\epsilon_*\A^{p,n-p}_{X_1}\ar[r]\ar[u] &0 \\
&0\ar[r] &\epsilon_*\A^{p,0}_{X_0} \ar[u,"\epsilon_0-\epsilon_1"]\ar[r,"\overline{\partial}"]&\epsilon_*\A^{p,1}_{X_0}\ar[u]\ar[r] &\cdots \ar[r] &\epsilon_*\A^{p,n-p}_{X_0}\ar[r]\ar[u] &0 \\
&\, &0\ar[u] &0\ar[u] &\, &0\ar[u]
\end{tikzcd}
\tag{$\star$}\label{double-complex}\]
where $\A^{p,q}_{X_i}$ is the sheaf of smooth $(p,q)$-forms on $X_i$. The horizontal differentials in the double complex are given by $\overline{\partial}$, and the vertical differentials are induced by alternating sums of the pullback maps from the simplicial resolution.
\end{enumerate}
Since rows of (\ref{double-complex}) represent the objects $\bR \epsilon_{i*} \Omega^p_{X_i} \in D^b_{\rm coh}(X_i)$, it follows that taking the total complex associated to (\ref{double-complex}) amounts to successively taking cones of the morphisms $\bR \epsilon_{i*} \Omega^p_{X_i} \to \bR \epsilon_{i+1*} \Omega^p_{X_{i+1}}$. More precisely, we have:
\begin{lemma}\label{lemma:DB-cpx-out-of-cones}
Let $X$ be a variety, and $\epsilon: X_{\bullet}\to X$ a simplicial resolution. Then the p-th graded piece of the Du Bois complex is ``built out of cones":
\[\DB_X^p=\Cone(\dots (\Cone(\bR\epsilon_{0*}\Omega_{X_0}^p,\bR\epsilon_{1*}\Omega_{X_1}^p),\bR\epsilon_{2*}\Omega_{X_2}^p), \dots,\bR\epsilon_{m*}\Omega_{X_m}^p)[-m].\]
\end{lemma}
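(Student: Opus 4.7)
The plan is to argue by induction on the length $m$ of the simplicial resolution $X_\bullet \to X$.

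First, I would identify each row $R_i := \epsilon_{i*}\A^{p,\bullet}_{X_i}$ of the bicomplex (\ref{double-complex}) with $\bR\epsilon_{i*}\Omega^p_{X_i}$ in the derived category: since $\A^{p,\bullet}_{X_i}$ is a resolution of $\Omega^p_{X_i}$ by fine (hence $\epsilon_{i*}$-acyclic) sheaves via the $\overline{\partial}$-Poincar\'e lemma, the complex $\epsilon_{i*}\A^{p,\bullet}_{X_i}$ computes the derived pushforward. Because $\DB^p_X$ is by construction the total complex of (\ref{double-complex}), the statement reduces to a purely homological fact: the total complex of a bounded bicomplex with rows $R_0,\ldots,R_m$ and vertical differentials is quasi-isomorphic to the iterated mapping cone $C_m[-m]$, where $C_0:=R_0$ and $C_i:=\Cone(C_{i-1}\to R_i)$ for $i\ge 1$, with the connecting maps induced by the vertical differentials of the bicomplex.

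For the inductive step, my plan is to use the short exact sequence of bicomplexes $0 \to R_m \to K \to K' \to 0$, where $R_m$ sits as the top row (a sub-bicomplex since nothing maps out of it vertically) and $K'$ is the truncated bicomplex with rows $R_0,\ldots,R_{m-1}$. Letting $T_i$ denote the total complex of the bicomplex with rows $R_0,\ldots,R_i$, this yields a short exact sequence of total complexes
\[
0 \to R_m[-m] \to T_m \to T_{m-1} \to 0,
\]
and, after rotating the associated distinguished triangle, an isomorphism $T_m \cong \Cone(T_{m-1}[-1] \to R_m[-m])$. Applying the inductive hypothesis $T_{m-1} \cong C_{m-1}[-(m-1)]$ and using that shifting commutes with forming cones then gives
\[
T_m \cong \Cone(C_{m-1}[-m] \to R_m[-m]) \cong \Cone(C_{m-1} \to R_m)[-m] = C_m[-m],
\]
as required; the base case $m=0$ is trivial since there the total complex is just $R_0$.

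The main subtle point is checking that the map $C_{m-1}\to R_m$ arising from the connecting homomorphism of the short exact sequence really is the one induced by the vertical differential $R_{m-1}\to R_m$. This should follow since the composition of consecutive vertical differentials in (\ref{double-complex}) vanishes, which allows $R_{m-1}\to R_m$ to lift canonically to a map out of $C_{m-1}=\Cone(C_{m-2}\to R_{m-1})$; I would verify this compatibility by unwinding the connecting map explicitly in the double-complex description, a step that is routine but demands careful bookkeeping of signs and shifts.
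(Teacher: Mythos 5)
Your proposal is correct and follows essentially the same route as the paper, which simply asserts the lemma as an immediate consequence of the double-complex description (rows $\epsilon_{i*}\A^{p,\bullet}_{X_i}$ being fine, hence $\epsilon_{i*}$-acyclic, resolutions representing $\bR\epsilon_{i*}\Omega^p_{X_i}$, and the total complex being an iterated cone of its rows). Your inductive bookkeeping via the short exact sequence $0\to R_m[-m]\to T_m\to T_{m-1}\to 0$ and the identification of the connecting map with the vertical differential just makes that observation explicit.
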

\noindent This observation will be useful in our proof of the base change formula (Lemma \ref{lemma:conormal-DB-SES}).


We summarize other well-known facts about Du Bois complexes that will be useful later. For a more comprehensive list of basic properties of $\DB_X^{\bullet}$, we refer the readers to \cite[Theorem 4.2]{kovacs-schwede-survey}.
\begin{proposition}\label{proposition:facts-about-DB-complex}
Let $X$ be a variety, and $(\DB_X^{\bullet},F)$ the Du Bois complex on $X$. We denote $\DB_X^{\le p}:= \DB_X^{\bullet}/F^p\DB_X^{\bullet}$.
\begin{enumerate}
\item There is a spectral sequence
\[E_1^{q,p}=\HH^q(X,\DB_X^p)\implies H^{p+q}(X,\CC).\]
When $X$ is proper, it degenerates at $E_1$, so the Hodge filtration on $H^{p+q}(X,\CC)$ is given by
\[F^pH^{p+q}(X,\CC) = \HH^q(X,F^p\DB_X^{\bullet}).\]
In particular, there is a surjection
\[H^i(X,\CC) \twoheadrightarrow H^i(X,\CC)/F^pH^i(X,\CC) = \HH^i(X,\DB_X^{\le p})\]
for every $i$ and $p$.

\item Let $Z$ be a closed reduced subscheme of $X$. Let $f: \tilde X \to X$ be a resolution of singularities which is an isomorphism outside $Z$, and $E := f^{-1}(Z)_{\mathrm{red}}$ is a simple normal crossing divisor. By \cite[Example 7.25]{peters-steenbrink}, we have the following exact triangle for all $p$:
\begin{equation}
\notag\label{equation:steenbrink-exact-triangle}
    \bR f_*\Omega^p_{\tilde X}(\log E)(-E) \to \DB^p_X \to \DB^p_Z \xto{+1}.
\end{equation}
\item Suppose $X$ is quasi-projective, and $H$ a general hyperplane section of $X$. Let $\epsilon'_\bullet: H_\bullet := \epsilon_\bullet^{-1}(H) \to H$ be the induced map via pullback. Since $H$ is general, $\epsilon'_\bullet$ is also a simplicial resolution of $H$. This can be summarized in the following diagram:
\[\begin{tikzcd}    &H_\bullet\ar[r,"i_\bullet"]\ar[d,swap,"\epsilon_\bullet'"] &X_\bullet \ar[d,"\epsilon_\bullet"] \\
    &H \ar[r,"i"] &X.
\end{tikzcd}\]
\end{enumerate}
\end{proposition}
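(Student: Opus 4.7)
Parts (1) and (2) are standard results whose proofs amount to assembling references, so the substantive work concerns part (3). For part (1), the spectral sequence is the hypercohomology spectral sequence associated to the filtration $F^\bullet$ on the Du Bois complex, converging to $\mathbb{H}^{p+q}(X,\DB_X^\bullet) = H^{p+q}(X,\mathbb{C})$ because $\DB_X^\bullet$ is quasi-isomorphic to $\mathbb{C}_X$; the $E_1$-degeneration for proper $X$ is due to Du Bois \cite{DB}. The stated surjection then follows from the long exact sequence in hypercohomology associated to the distinguished triangle $F^p\DB_X^\bullet \to \DB_X^\bullet \to \DB_X^{\le p} \xrightarrow{+1}$, combined with $E_1$-degeneration. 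For part (2), the exact triangle is exactly the content of \cite[Example 7.25]{peters-steenbrink}, so nothing further is required.

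For part (3), my plan is to first establish smoothness of each $H_i$ by Bertini, then verify cohomological descent by base change. The simplicial resolution $\epsilon_\bullet: X_\bullet \to X$ produced by \cite{GNPP} or \cite{Carlson-poly-res} has finitely many nonzero strata $X_i$, each smooth of dimension at most $n-i$. Applying the classical Bertini theorem to each $X_i$ in turn, a sufficiently general hyperplane $H \subset X$ has the property that $H_i := \epsilon_i^{-1}(H) \subset X_i$ is either empty or smooth of dimension at most $n-i-1$ for every $i$. Because only finitely many such conditions must be imposed, a single general $H$ suffices uniformly. The face and degeneracy maps of $X_\bullet$ restrict to $H_\bullet$, making $\epsilon'_\bullet: H_\bullet \to H$ into a simplicial scheme over $H$, realized as the base change of $\epsilon_\bullet$ along the closed immersion $i: H \hookrightarrow X$.

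The main obstacle, or rather the point requiring care, is verifying that $\epsilon'_\bullet$ inherits cohomological descent, so that it is genuinely a simplicial resolution of $H$ in the sense needed to compute $\DB^\bullet_H$ by formula (\ref{double-complex}). The key input is that proper pushforward commutes with pullback along a closed immersion: for a bounded complex $\mathcal{F}$ on $H$, one has a natural isomorphism $i^* \mathbb{R}\epsilon_{\bullet *} \epsilon_\bullet^* (i_* \mathcal{F}) \simeq \mathbb{R}\epsilon'_{\bullet *} \epsilon'^*_\bullet \mathcal{F}$, and since $i_*$ is exact on a closed immersion and $\epsilon_\bullet$ satisfies descent on $X$, the analogous statement for $\epsilon'_\bullet$ on $H$ follows. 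This base-change verification is essentially what \cite{GNPP}, Exposé I, provides, after which part (3) is immediate.
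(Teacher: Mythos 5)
Your proposal is correct and matches the paper, which states this Proposition as a list of known facts with references and offers no proof of its own: part (1) is the standard Hodge--Du Bois spectral sequence with Du Bois's $E_1$-degeneration, and part (2) is exactly the cited triangle from Peters--Steenbrink. For part (3), where the paper simply asserts the claim, your argument is the standard and complete justification: Bertini applied to the finitely many smooth strata $X_i$ (the pullback of the hyperplane class is basepoint-free on each $X_i$, so a single general $H$ makes every $H_i$ smooth of the right dimension), together with the observation that cohomological descent is inherited under base change along the closed immersion $i$ via the identity $i^*\bR\epsilon_{\bullet*}\epsilon_\bullet^*(i_*\F)\simeq \bR\epsilon'_{\bullet*}\epsilon'^*_\bullet\F$, which follows from the compatibility of proper pushforward with restriction to a closed subscheme.
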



Analogous to the differentials of the de Rham complex, we have morphisms
\[d: \DB^{j}_X \to \DB^{j+1}_X,\] which can be defined as the connecting morphism of the exact triangle
\[F^{j+1}\DB_X^\bullet/F^{j+2}\DB_X^\bullet \to F^{j}\DB_X^\bullet/ F^{j+2}\DB_X^\bullet \to F^{j}\DB_X^\bullet/F^{j+1}\DB_X^\bullet \xto{+1}.\]
The degree zero pieces give rise to a complex of sheaves, which we denote by
\[\Omega_{X,h}^{\le p}:=[\H^0\DB^0_X \xto{d} \H^0\DB^1_X \xto{d} \dots \xto{d} \H^0\DB^p_X],\]
placed in cohomological degrees $0,1,\dots, p$. Our choice of notation is motivated by the fact that $\H^0\DB_X^p$ agrees with the $h$-differentials studied in \cite{HJ14}.

\begin{proposition}\label{prop:Hodge-theory-surjection}
Let $X$ be a proper variety. Then there is a surjection
\[\HH^i(X,\Omega_{X,h}^{\le p})\to \HH^i(X,\DB_X^{\le p})\]
for all $i$ and $p$.
\begin{proof}
We first argue by induction on $p$ that there are natural maps
\[\Omega_{X,h}^{\le p}\to \DB_X^{\le p}.\]
When $p=0$, it is clear that we have a map $\Omega_{X,h}^{\le 0}=\H^0\DB_X^0 \to \DB_X^0$. Assume we have defined a map $\Omega_{X,h}^{\le p-1}\to \DB_X^{\le p-1}$, then from the following exact triangles
\[\begin{tikzcd}
\H^0\DB^p_X[-p] \ar[r]\ar[d] &\Omega_{X,h}^{\le p}\ar[r]\ar[d, dotted] &\Omega^{\le p-1}_{X,h} \ar[r,"+1"]\ar[d] &\, \\
\DB_X^{p}[-p] \ar[r] &\DB_X^{\le p}\ar[r]&\DB_X^{\le p-1}\ar[r,"+1"] &\, \\
\end{tikzcd}\]
we obtain a map $\Omega_{X,h}^{\le p}\to \DB_X^{\le p}$.

The natural maps $\CC \to \Omega_{X,h}^{\le p} \to \DB^{\le p}_X$ gives rise to a commutative diagram
\[\begin{tikzcd}
&H^i(X,\CC)\ar[rr]\ar[rd] & &\HH^i(X,\DB_X^{\le p}) \\
&\, &\HH^i(X,\Omega_{X,h}^{\le p})\ar[ru] &\,
\end{tikzcd}\]
For each $i$ and $p$, the horizontal map is surjective by Proposition \ref{proposition:facts-about-DB-complex}(1). It follows that
\[\HH^i(X,\Omega_{X,h}^{\le p})\to \HH^i(X,\DB_X^{\le p})\]
is surjective.
\end{proof}
\end{proposition}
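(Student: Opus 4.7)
The plan is to reduce the claim to the $E_1$-degeneration of the Hodge-to-de Rham spectral sequence. By Proposition \ref{proposition:facts-about-DB-complex}(1), the natural map $\CC \to \DB_X^{\le p}$ induces a surjection $H^i(X,\CC) \twoheadrightarrow \HH^i(X,\DB_X^{\le p})$. Thus it suffices to construct a morphism $\alpha_p: \Omega_{X,h}^{\le p} \to \DB_X^{\le p}$ in the derived category through which this map factors, since functoriality of hypercohomology will then force the induced map $\HH^i(X, \Omega_{X,h}^{\le p}) \to \HH^i(X, \DB_X^{\le p})$ to be surjective as well.

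I would construct $\alpha_p$ by induction on $p$. The base case $p = 0$ is tautological: $\Omega_{X,h}^{\le 0} = \H^0\DB_X^0$ maps naturally to $\DB_X^0 = \DB_X^{\le 0}$ via the inclusion of the zeroth cohomology sheaf. For the inductive step, I would place the exact triangles
\[
\H^0\DB_X^p[-p] \to \Omega_{X,h}^{\le p} \to \Omega_{X,h}^{\le p-1} \xto{+1}
\]
and
\[
\DB_X^p[-p] \to \DB_X^{\le p} \to \DB_X^{\le p-1} \xto{+1}
\]
in parallel and fill in a dashed arrow $\alpha_p$ between the middle terms via the axioms of triangulated categories, using the tautological inclusion $\H^0\DB_X^p \hookrightarrow \DB_X^p$ on the left and the inductive map $\alpha_{p-1}$ on the right.

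The main obstacle is checking that the rightmost square commutes, i.e., that the connecting morphism $\Omega_{X,h}^{\le p-1} \to \H^0\DB_X^p[-p+1]$ is compatible with $\DB_X^{\le p-1} \to \DB_X^p[-p+1]$ under $\alpha_{p-1}$. This ought to be essentially a tautology: the differential $d: \H^0\DB_X^{p-1} \to \H^0\DB_X^p$ appearing in $\Omega_{X,h}^{\le p}$ was defined precisely as the connecting morphism of the exact triangle built from the graded pieces $F^{j}/F^{j+2}$ of the Hodge filtration, so both connecting morphisms come from the same source and agree by construction. Unpacking this carefully is the one nontrivial verification in the argument.

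Finally, the natural morphism $\CC \to \H^0\DB_X^0$ extends via the truncation triangles of $\Omega_{X,h}^{\le p}$ to a map $\CC \to \Omega_{X,h}^{\le p}$, and its composition with $\alpha_p$ recovers the Hodge-theoretic map $\CC \to \DB_X^{\le p}$. Taking $\HH^i(X, -)$ of the resulting commutative triangle and invoking the surjectivity provided by Proposition \ref{proposition:facts-about-DB-complex}(1) then forces $\HH^i(X,\Omega_{X,h}^{\le p}) \to \HH^i(X,\DB_X^{\le p})$ to be surjective, completing the proof.
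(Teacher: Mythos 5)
Your proposal is correct and follows essentially the same route as the paper's own proof: construct $\Omega_{X,h}^{\le p}\to \DB_X^{\le p}$ by induction on $p$ via the parallel truncation triangles, then factor the surjection $H^i(X,\CC)\twoheadrightarrow \HH^i(X,\DB_X^{\le p})$ coming from $E_1$-degeneration through it. Your extra attention to the commutativity of the right-hand square (which the paper leaves implicit) is a reasonable refinement, and your justification via the definition of $d$ as a connecting morphism is the right one.
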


\subsection{Grothendieck duality}
Following notations in \cite[Section 3.3]{FL-Saito}, we consider the shifted Grothendieck duality functor on an $n$-dimensional variety $X$ as
\[\DD_X(\blank):= \RHom(\blank, \omega_X^{\bullet})[-n].\]
It is straight-forward to check that $\DD_X(\DB_X^p)$ is supported in non-negative degree, and that if $Z\subset X$ is a subvariety of codimension $c$, then
\[\DD_X(\blank) = \DD_Z(\blank)[-c].\]

For all $p$, dualizing the exact triangle in Proposition \ref{proposition:facts-about-DB-complex}(2), we obtain the exact triangle:
    \[ \DD_X(\DB^{n-p}_Z) \to \DD_X(\DB^{n-p}_X) \to \bR f_*\Omega^p_{\tilde X}(\log E)\xto{+1}. \]

\begin{lemma}\label{lemma:h0-dual-and-log-differentials}
Let $X$ be a variety.
\begin{enumerate}
    \item $\DD_X(\DB^{n-p}_X) \to \bR f_*\Omega^p_{\tilde X}(\log E)$ is an isomorphism for $p < \codim_X(X_{\sing})$.
    \item  If $X$ is normal, then the natural map
\[ \H^0(\DD_X(\DB^{n-p}_X)) \to f_*\Omega^p_{\tilde X}(\log E). \]
is an isomorphism for all $p$.
\end{enumerate} 
\begin{proof}
Let $Z$ denote $X_{\sing}$. The first statement is clear because $\DB_Z^{n-p}=0$ for $p<\codim_X Z$. For the second statement, since $X$ is normal, $c:=\codim_X Z\ge 2$, so $\DD_X(\DB^{n-p}_Z) \cong \DD_Z(\DB^{n-p}_Z)[-c]$ is supported in degree $\ge 2$; it then follows from the above exact triangle that    \[\H^0(\DD_X(\DB^{n-p}_X)) \to f_*\Omega^p_{\tilde X}(\log E)\]
is an isomorphism for all $p$.
\end{proof}
\end{lemma}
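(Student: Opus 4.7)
The plan is to use the dualized exact triangle
\[ \DD_X(\DB^{n-p}_Z) \to \DD_X(\DB^{n-p}_X) \to \bR f_*\Omega^p_{\tilde X}(\log E) \xto{+1} \]
displayed just above the lemma, with $Z := X_{\sing}$. Both parts of the lemma reduce to showing that, in the appropriate cohomological range, the first term either vanishes or contributes nothing to $\H^0$.

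For part (1), I would observe that $\dim Z = n - c$, where $c := \codim_X(X_{\sing})$. Since by construction (recalled in the preliminaries) the Du Bois complex of any variety $Y$ is built from a simplicial resolution of length at most $\dim Y$, the graded piece $\DB^{n-p}_Z$ vanishes whenever $n - p > \dim Z$, i.e., whenever $p < c$. Under the hypothesis $p < \codim_X(X_{\sing})$, the first term of the triangle therefore vanishes, and the second arrow becomes an isomorphism.

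For part (2), normality forces $c \geq 2$. Since $\DB^{n-p}_Z$ is supported on $Z$, Grothendieck duality for the closed immersion $Z \hookrightarrow X$ gives $\DD_X(\DB^{n-p}_Z) \cong \DD_Z(\DB^{n-p}_Z)[-c]$. Combined with the already-noted fact that $\DD_Y(\DB^\bullet_Y)$ lives in non-negative cohomological degrees for any variety $Y$, this places $\DD_X(\DB^{n-p}_Z)$ in degrees $\geq c \geq 2$. Taking $\H^0$ of the long exact sequence of the triangle then identifies $\H^0 \DD_X(\DB^{n-p}_X)$ with $\H^0 \bR f_*\Omega^p_{\tilde X}(\log E) = f_*\Omega^p_{\tilde X}(\log E)$. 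I do not anticipate any serious obstacle; the only point worth checking carefully is the shift $[-c]$ in the duality comparison for the closed immersion, but this is a standard application of Grothendieck duality.
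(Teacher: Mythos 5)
Your argument is correct and coincides with the paper's own proof: both parts are read off from the dualized exact triangle $\DD_X(\DB^{n-p}_Z) \to \DD_X(\DB^{n-p}_X) \to \bR f_*\Omega^p_{\tilde X}(\log E) \xto{+1}$, with (1) following from $\DB^{n-p}_Z = 0$ when $n-p > \dim Z$, and (2) from the degree shift $[-c]$ placing $\DD_X(\DB^{n-p}_Z)$ in degrees $\geq 2$. Nothing further is needed.
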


\subsection{Reflexive differentials}
Let $X$ be a variety. We denote the reflexive differential on $X$ by
\[\Omega_X^{[p]}:=(\Omega^p_X)^{\vee\vee},\]
where $(\blank)^{\vee}=\mathcal{H}om(\blank,\O_X)$. 

When $X$ is normal, $\Omega_X^{[p]} \cong j_*\Omega_U^p$, where $j: U\to X$ is the inclusion of the smooth locus $U$ into $X$. 

When $X$ has rational singularities, deep theorems of Kebekus-Schnell \cite{Kebekus-Schnell} imply that the sheaves $\H^0\DB_X^p, f_*\Omega^p_{\tilde{X}}$ and $\H^0(\DD_X(\DB_X^{n-p})) \cong f_*\Omega^p_{\tilde X}(\log E)$ (where $f: \tilde{X}\to X$ is a strong log resolution of $X$ with reduced exceptional divisor $E$) are all isomorphic to the reflexive differential $\Omega^{[p]}_X$. The following remark gives more precise information about the isomorphisms between these sheaves, which will be useful later for technical purposes.
\begin{remark}\label{remark:comparing-various-differentials}
Let $X$ be a variety with rational singularities. Then we have a commutative diagram
    \[
    \begin{tikzcd}
        \Omega^p_X \ar[r,"a"] & \H^0\DB^p_X \ar[d, swap, "g"] \ar[r, "b"] & \H^0(\DD_X(\DB_X^{n-p})) \ar[d, "\simeq"]\\ 
        & \Omega_X^{[p]} & f_*\Omega^p_{\tilde X}(\log E)  \ar[l, "h"]
    \end{tikzcd}
    \]
    Combining \cite[Theorem 7.12]{HJ14} and \cite[Corollary 1.12]{Kebekus-Schnell}, we have that $g$ is an isomorphism. By \cite[Corollary 1.8]{Kebekus-Schnell}, $h$ is an isomorphism. As a consequence, we see that $b \circ a$ is an isomorphism if and only if $a$ is an isomorphism.
\end{remark}

\section{Behavior under taking general hyperplane sections} \label{section:hyperplane-cutting-argument}

The goal of this section is to establish Theorem \ref{theorem:gen-hyperplane-sections-pre-k-DB}.
The technical ingredient in the proof is the following base change result for hyperresolutions.

\begin{lemma}\label{lemma:DB-of-hyperplane}
Let $X$ be a quasi-projective variety, and $H\xhookrightarrow{i} X$ a general hyperplane section. Then for every $p$, we have
    \[ \bR\epsilon'_{\bullet*}(\Omega_{X_\bullet}^p \dt_{\mathcal{O}_{X_\bullet}} \O_{H_\bullet}) \simeq \bR\epsilon_{\bullet*}\Omega_{X_{\bullet}}^p \dt_{\mathcal{O}_X} \O_H,\]
    where $\epsilon: X_{\bullet} \to X$ is any simplicial resolution of $X$, and $\epsilon': H_{\bullet}=\epsilon_{\bullet}^{-1}(H) \to H$ the corresponding simplicial resolution of $H$ as in Proposition \ref{proposition:facts-about-DB-complex}(3).
\end{lemma}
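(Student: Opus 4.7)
The plan is to reduce the stated quasi-isomorphism to a simplicial-level statement using Lemma \ref{lemma:DB-cpx-out-of-cones}, and then apply derived flat base change after establishing Tor-independence via Bertini's theorem.

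First, by Lemma \ref{lemma:DB-cpx-out-of-cones}, the object $\bR\epsilon_{\bullet*}\Omega^p_{X_\bullet}$ is built as an iterated cone of the level-wise pushforwards $\bR\epsilon_{i*}\Omega^p_{X_i}$; the same construction, applied to the simplicial object $\Omega^p_{X_\bullet} \dt_{\O_{X_\bullet}} \O_{H_\bullet}$ on $H_\bullet$, exhibits $\bR\epsilon'_{\bullet*}(\Omega^p_{X_\bullet} \dt_{\O_{X_\bullet}} \O_{H_\bullet})$ as an iterated cone of $\bR\epsilon'_{i*}(\Omega^p_{X_i} \dt_{\O_{X_i}} \O_{H_i})$, using that the face maps on $H_\bullet$ are the restrictions of those on $X_\bullet$. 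Since the derived functor $(\blank) \dt_{\O_X} \O_H$ is triangulated and hence preserves cones, it suffices to establish, for each $i$,
\[ \bR\epsilon_{i*}\Omega^p_{X_i} \dt_{\O_X} \O_H \;\simeq\; \bR\epsilon'_{i*}(\Omega^p_{X_i} \dt_{\O_{X_i}} \O_{H_i}), \]
and then to assemble these level-wise isomorphisms via the cone construction.

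For this level-wise claim, I would invoke Bertini: embedding $X$ into some $\mathbb{P}^N$ and using that each $X_i$ is smooth (with only finitely many nontrivial levels), a general hyperplane $H \subset \mathbb{P}^N$ cuts out a smooth divisor $H_i = \epsilon_i^{-1}(H)$ in $X_i$ for all $i$ simultaneously. Hence a local defining equation $f$ for $H$ in $X$ pulls back to a non-zero-divisor on each $X_i$, so $\O_{X_i} \dt_{\O_X} \O_H \simeq \O_{H_i}$; equivalently, the Cartesian square
\[\begin{tikzcd} H_i \ar[r] \ar[d, swap, "\epsilon'_i"] & X_i \ar[d, "\epsilon_i"] \\ H \ar[r, "i"] & X \end{tikzcd}\]
is Tor-independent. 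Derived flat base change, combined with the projection formula, then yields
\[ \bR\epsilon_{i*}\Omega^p_{X_i} \dt_{\O_X} \O_H \;\simeq\; \bR\epsilon_{i*}\bigl(\Omega^p_{X_i} \dt_{\O_{X_i}} \O_{H_i}\bigr) \;\simeq\; i_*\bR\epsilon'_{i*}\bigl(\Omega^p_{X_i} \dt_{\O_{X_i}} \O_{H_i}\bigr), \]
the last identification using that the sheaf is supported on $H_i$ and that $i$ is a closed immersion.

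The main obstacle is guaranteeing Tor-independence simultaneously across all simplicial levels; this is why $H$ must be chosen \emph{general}, so that the finitely many open Bertini conditions on the dual projective space intersect in a nonempty open set. Once this is in hand, the rest of the argument is formal: level-wise derived base change assembled through the iterated-cone decomposition of Lemma \ref{lemma:DB-cpx-out-of-cones}.
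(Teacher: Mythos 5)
Your proposal is correct and follows essentially the same route as the paper: decompose $\bR\epsilon_{\bullet*}\Omega^p_{X_\bullet}$ into iterated cones via Lemma \ref{lemma:DB-cpx-out-of-cones}, commute $\bL i^*$ past the cones, and apply Tor-independent derived base change level by level, with Tor-independence supplied by the generality of $H$. The extra detail you give (Bertini across the finitely many levels, the non-zero-divisor criterion for Tor-independence) is a correct elaboration of the paper's one-line justification.
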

\begin{proof}
By Lemma \ref{lemma:DB-cpx-out-of-cones}, we get
\[ \bR\epsilon_{\bullet*}\Omega_{X^{\bullet}}^p = \Cone(\dots (\Cone(\bR\epsilon_{0*}\Omega_{X_0}^p,\bR\epsilon_{1*}\Omega_{X_1}^p),\bR\epsilon_{2*}\Omega_{X_2}^p), \dots,\bR\epsilon_{m*}\Omega_{X_m}^p)[-m].\]
Being an exact functor, $\bL i^*$ commutes with taking cones, so
\begin{align*}
    \bL i^*\bR\epsilon_{\bullet*}\Omega_{X^{\bullet}}^p &= \bL i^*\Cone(\dots (\Cone(\bR\epsilon_{0*}\Omega_{X_0}^p,\bR\epsilon_{1*}\Omega_{X_1}^p), \dots,\bR\epsilon_{m*}\Omega_{X_m}^p)[-m]\\
    &= \Cone(\dots (\Cone(\bL i^*\bR\epsilon_{0*}\Omega_{X_0}^p,\bL i^*\bR\epsilon_{1*}\Omega_{X_1}^p), \dots,\bL i^*\bR\epsilon_{m*}\Omega_{X_m}^p)[-m]\\
    &= \Cone(\dots (\Cone(\bR\epsilon'_{0*}\bL i_0^*\Omega_{X_0}^p,\bR\epsilon'_{1*}\bL i_1^*\Omega_{X_1}^p), \dots,\bR\epsilon'_{m*}\bL i_m^*\Omega_{X_m}^p)[-m]\\
    &= \bR\epsilon'_{\bullet*}\bL i_\bullet^*\Omega^p_{X_{\bullet}},
\end{align*}
where we used Lemma \ref{lemma:DB-cpx-out-of-cones} again in the last equality. The third equality holds by the base change formula \cite[\href{https://stacks.math.columbia.edu/tag/08IB}{Tag 08IB}]{stacks-project}, where the Tor-independence of $X_i$ and $H$ over $X$ is satisfied because $H$ is general.
\end{proof}


From this lemma, we obtain an analogue of the conormal exact sequence for the  graded pieces of the Du Bois complex. 

\begin{lemma}\label{lemma:conormal-DB-SES}
Let $X$ be a quasi-projective variety and $H\subset X$ a general hyperplane section. Then there is an exact triangle:
\[\DB_H^{p-1}\dt_{\O_H} \O_H(-H)\to \DB_X^p\dt_{\O_X} \O_H\to \DB_H^p\xto{+1},\]
which we abbreviate as:
\begin{equation}
\notag\label{equation:exact-triangle-DB-hyperplane}
    \DB_H^{p-1}(-H)\to \DB_X^p|_H\to \DB_H^p\xto{+1}.
\end{equation}
\end{lemma}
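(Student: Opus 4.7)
The plan is to build the exact triangle level by level on a hyperresolution, where each level is smooth and admits the classical conormal sequence, and then to pass to the Du Bois pieces by iterating cones as in Lemma \ref{lemma:DB-cpx-out-of-cones}.

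Fix a simplicial resolution $\epsilon_\bullet : X_\bullet \to X$ and let $H_\bullet := \epsilon_\bullet^{-1}(H)$ with induced resolution $\epsilon'_\bullet : H_\bullet \to H$ as in Proposition \ref{proposition:facts-about-DB-complex}(3). Since $H$ is a general hyperplane section, each $H_i$ is a smooth divisor in the smooth variety $X_i$, cut out by the pullback of a defining equation of $H$ in $X$; in particular $\O_{H_i}(-H_i) \cong (\epsilon'_i)^*\O_H(-H)$. On each smooth pair $(X_i,H_i)$ we have the standard conormal short exact sequence
\[0 \to \Omega^{p-1}_{H_i}(-H_i) \to \Omega^p_{X_i}|_{H_i} \to \Omega^p_{H_i} \to 0,\]
and these sequences are natural under the face maps of the hyperresolution (all maps being maps of smooth pairs with transversality ensured by genericity of $H$), so they assemble into a short exact sequence of simplicial sheaves.

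Applying $\bR\epsilon'_{i*}$ termwise and using the projection formula on the leftmost term together with the identification $\O_{H_i}(-H_i) \cong (\epsilon'_i)^*\O_H(-H)$, one obtains on each level $i$ an exact triangle
\[\bigl(\bR\epsilon'_{i*}\Omega^{p-1}_{H_i}\bigr) \dt_{\O_H} \O_H(-H) \to \bR\epsilon'_{i*}\bigl(\Omega^p_{X_i} \dt_{\O_{X_i}} \O_{H_i}\bigr) \to \bR\epsilon'_{i*}\Omega^p_{H_i} \xto{+1}.\]
Both $\bR\epsilon'_{i*}$ and $(-) \dt_{\O_H} \O_H(-H)$ commute with taking cones (the latter because $\O_H(-H)$ is a line bundle, hence flat over $\O_H$), so iterating the cone construction as in Lemma \ref{lemma:DB-cpx-out-of-cones} assembles these into an exact triangle
\[\DB^{p-1}_H(-H) \to \bR\epsilon'_{\bullet*}\bigl(\Omega^p_{X_\bullet} \dt \O_{H_\bullet}\bigr) \to \DB^p_H \xto{+1}.\]
Finally, Lemma \ref{lemma:DB-of-hyperplane} identifies the middle term with $\DB^p_X \dt_{\O_X} \O_H = \DB^p_X|_H$, which yields the desired triangle.

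The main delicate point I expect is the compatibility across levels: namely, checking that the levelwise conormal short exact sequences really fit together into a short exact sequence of simplicial sheaves that survives the iterated cone construction. This rests on the naturality of the conormal sequence under maps of smooth pairs, together with the genericity of $H$, which guarantees that its pullback to each $X_i$ is a smooth Cartier divisor and underlies the Tor-independence that Lemma \ref{lemma:DB-of-hyperplane} implicitly exploits; once these are in hand, the rest of the argument is essentially formal.
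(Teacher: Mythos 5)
Your proof is correct and follows essentially the same route as the paper: establish the exact triangle with middle term $\bR\epsilon'_{\bullet*}(\Omega^p_{X_\bullet}\dt_{\O_{X_\bullet}}\O_{H_\bullet})$, then identify that term with $\DB^p_X\dt_{\O_X}\O_H$ via Lemma \ref{lemma:DB-of-hyperplane}. The only difference is that the paper obtains the first triangle by citing \cite[V.2.2.1]{GNPP}, whereas you reconstruct it by assembling the levelwise conormal sequences on the hyperresolution — which is precisely the content of that reference (and, as you anticipate, the assembly is cleanest if phrased as a short exact sequence of total complexes rather than iterated cones of abstract triangles, since cones are not functorial in the derived category).
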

\begin{proof}
    By \cite[ V.2.2.1]{GNPP}, we have an exact triangle:
    \[ \DB_H^{p-1}\dt_{\O_H} \O_H(-H)\to \bR\epsilon'_{\bullet*}(\Omega_{X_\bullet}^p \dt_{\O_{X_\bullet}} \O_{H_\bullet}) \to \DB_H^p\xto{+1}. \]
    Thus, we only need to show that
    \[ \bR\epsilon'_{\bullet*}(\Omega_{X_\bullet}^p \dt_{O_{X_\bullet}} \O_{H_\bullet}) \simeq \DB_X^p\dt_{\O_{X}} \O_H, \]
    which holds by Lemma \ref{lemma:DB-of-hyperplane}.
\end{proof}

We are now ready to show that pre-$k$-Du Bois and pre-$k$-rational singularities are stable under taking general hyperplanes.

\begin{proof}[Proof of Theorem \ref{theorem:gen-hyperplane-sections-pre-k-DB}]
We first treat the pre-$k$-Du Bois case. We prove the statement by induction on $k$, with the base case $k=-1$ being vacuous. Let $H\subset X$ be a general hyperplane section. By Lemma \ref{lemma:conormal-DB-SES}, we have an exact triangle \[\DB^{p-1}_H(-H)\to \DB^p_X|_H \to \DB_H^p\xto{+1}.\]
For $i>0$, we get 
\[\H^i (\DB^{p-1}_H(-H)) = 0\] 
by the induction hypothesis, and 
\[ \H^i(\DB^p_X|_H) \cong (\H^i\DB^p_X)|_H= 0\] 
because $X$ has pre-$k$-Du Bois singularities and $H$ is general. By taking cohomology of the above exact triangle, it follows that $H$ has pre-$k$-Du Bois singularities.

To show that $H$ has pre-$k$-rational singularities, we dualize the above exact triangle to get:
\[\DD_H(\DB^{n-p}_H) \to \DD_H(\DB^{n-p}_X|_H) \to \DD_H(\DB^{n-p-1}_H(-H)) \xto{+1}.\]
Observe that the middle term is identified with 
\[ \DD_H(\DB^{n-p}_X|_H) \cong i^!\DD_X(\DB^{n-p}_X)[1] \cong \DD_X(\DB^{n-p}_X)(-H)|_H,\]
by \cite[Tag 0AU3, 4(b) and 7]{stacks-project}. Thus the above exact triangle is the same as
\[\DD_H(\DB^{n-p}_H) \to \DD_X(\DB^{n-p}_X)(-H)|_H\to \DD_H(\DB^{n-p-1}_H)(H) \xto{+1}.\]
Now, the same argument as above shows that $H$ has pre-$k$-rational singularities. 
\end{proof}

\begin{corollary}\label{cor:old-k-DB-stable-under-hyperplanes}
Let $X$ be a quasi-projective variety. If $X$ has strict $k$-Du Bois (resp. strict-$k$-rational) singularities, then a general hyperplane section $H$ of $X$ also has strict-$k$-Du Bois (resp. strict-$k$-rational) singularities.
\begin{proof}
We argue by induction on $k$. Assume $X$ satisfies has strict $k$-Du Bois singularities, then $H$ has pre-$k$-Du Bois singularities by Theorem \ref{theorem:gen-hyperplane-sections-pre-k-DB}. By induction we obtain $\Omega_H^{p}\to \DB_H^p$ is an isomorphism for $p\le k-1$, hence it suffices to show the natural map
\[\Omega_H^k \to \H^0\DB_H^k\]
is an isomorphism. We have a commutative diagram:
\[
\begin{tikzcd}
&\, &\Omega_H^{k-1}(-H) \ar[r]\ar[d] &\Omega_X^k|_H \ar[r]\ar[d] &\Omega_H^k \ar[r]\ar[d] &0 \\
&0\ar[r] &\H^0\DB_H^{k-1}(-H) \ar[r] &\H^0(\DB_X^k|_H)\ar[r] &\H^0\DB_H^k \ar[r] &0
\end{tikzcd}\]
where the second row is obtained by taking cohomology of the exact triangle (\ref{equation:exact-triangle-DB-hyperplane}). The first column is an isomorphism by induction, and the second column is an isomorphism because $X$ has strict $k$-Du Bois singularities. It follows that the third column is an isomorphism, as desired.

The proof for strict $k$-rational singularities is similar.
\end{proof}
\end{corollary}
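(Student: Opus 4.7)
The plan is to proceed by induction on $k$. Theorem \ref{theorem:gen-hyperplane-sections-pre-k-DB} already supplies the pre-$k$-Du Bois (resp. pre-$k$-rational) half of the conclusion for $H$, so what remains is to verify the condition in cohomological degree $0$, namely that $\Omega^p_H \to \H^0\DB^p_H$ (resp. its rational analogue) is an isomorphism for $0 \le p \le k$. The induction hypothesis handles $p \le k-1$, so the task reduces to the case $p=k$. (For $k=0$, one checks separately that seminormality of $X$ passes to a general hyperplane section via a Bertini-type argument, which is the content of strict-$0$-Du Bois.)

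For the inductive step, I would compare two natural sequences relating the graded pieces on $X$ and $H$. On one hand, the conormal sequence for K\"ahler differentials reads
\[\Omega^{k-1}_H(-H) \to \Omega^k_X|_H \to \Omega^k_H \to 0.\]
On the other hand, taking $\H^0$ of the exact triangle from Lemma \ref{lemma:conormal-DB-SES} yields
\[ 0 \to \H^0\DB^{k-1}_H(-H) \to \H^0(\DB^k_X|_H) \to \H^0\DB^k_H \to 0,\]
where left-exactness uses the vanishing $\H^1\DB^{k-1}_H(-H)=0$, which is a consequence of pre-$(k-1)$-Du Bois singularities of $H$ obtained from Theorem \ref{theorem:gen-hyperplane-sections-pre-k-DB}. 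Arranging these two rows into a commutative diagram, the inductive hypothesis supplies an isomorphism on the left vertical arrow, while the strict $k$-Du Bois assumption on $X$ identifies $\Omega^k_X|_H \cong \DB^k_X|_H$ and hence gives an isomorphism on the middle vertical arrow. A straightforward four-lemma argument then forces the rightmost vertical map to be an isomorphism.

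The pre-$k$-rational case proceeds identically, after dualizing the triangle of Lemma \ref{lemma:conormal-DB-SES} via $\DD_H(\blank)$; the middle term is identified with $\DD_X(\DB^{n-k}_X)(-H)|_H$ by the same Grothendieck duality computation used in the proof of Theorem \ref{theorem:gen-hyperplane-sections-pre-k-DB}, and one compares this against the log-differential incarnation of the $0$-th cohomology from Lemma \ref{lemma:h0-dual-and-log-differentials}.

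The main technical subtlety I expect is the compatibility $\H^0(\DB^k_X|_H) \cong (\H^0\DB^k_X)|_H$, which is needed to transport the hypothesis on $X$ across the restriction. This is a Tor-independence issue that is resolved by taking $H$ general enough to avoid the associated points of $\H^i\DB^k_X$ for $i \ge 0$; beyond this, the argument is a routine diagram chase leveraging the already-established pre-$k$-versions of the statements.
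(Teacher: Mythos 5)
Your proposal is correct and follows essentially the same route as the paper: induction on $k$, Theorem \ref{theorem:gen-hyperplane-sections-pre-k-DB} for the pre-$k$ part, and a four-lemma comparison of the conormal sequence for K\"ahler differentials with the $\H^0$-sequence of the triangle from Lemma \ref{lemma:conormal-DB-SES}, with the middle isomorphism coming from $\Omega^k_X \simeq \DB^k_X$ restricted to the general $H$. The extra points you flag (left-exactness of the bottom row via pre-$(k-1)$-Du Bois of $H$, and commuting $\H^0$ with restriction by genericity of $H$) are exactly the implicit ingredients of the paper's argument.
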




\section{pre-$k$-rational implies pre-$k$-Du Bois and Corollaries}\label{section:k-rational-singularity-and-k-Du-Bois}

In this section, we give proofs of Theorem \ref{theorem:pre-k-rational-implies-pre-k-DB} and Corollary \ref{corollary:k-rational-implies-k-DB}, following the strategy of Koll\'ar \cite{Kollar} and Kov\'acs \cite{kovacs-rational-singularities-are-DB} (see also \cite{FL-Saito}).

Before starting the proof, we outline an argument when $X$ is projective, which we hope will make the proof in the general case easier to follow. Note that we have natural maps
\[\H^0\DB_X^k \xto{f} \DB_X^k \xto{g} \DD_X(\DB_X^{n-k}).\]
\begin{enumerate}
    \item Since $X$ is normal and pre-$k$-rational, the composition $g\circ f$ is a quasi-isomorphism. It follows that $f$ induces injective map
    \begin{equation}         H^i(X,\H^0\DB_X^k)\to \HH^i(X,\DB_X^k)\tag{$\ast$} \label{eqn:hypercoh} 
    \end{equation}
    on hypercohomology for all $i$; 
    \item Using surjectivity given by Hodge theory (Proposition \ref{prop:Hodge-theory-surjection}), one can show the map (\ref{eqn:hypercoh})
    is surjective when $X$ is pre-$(k-1)$-Du Bois, which we may assume by induction on $k$;
    \item Combining (1) and (2), we see that 
    (\ref{eqn:hypercoh})
    is an isomorphism. This implies \[\H^0\DB_X^k \xto{\qis} \DB_X^k\](i.e. $X$ is pre-$k$-Du Bois), provided the ``bad locus"
    \[\Sigma_X:= \supp \Cone(\H^0\DB_X^k \to \DB_X^k)\]
    is zero-dimensional;
    \item Using induction, as well as results about hyperplane sections developed in Section \ref{section:hyperplane-cutting-argument}, one can reduce to the case $\dim \Sigma_X\le 0$. 
\end{enumerate}
When $X$ is not necessarily projective, Proposition \ref{prop:Hodge-theory-surjection} does not apply directly. The trick, due to Kov\'acs, is to compactify $X$ and use excision of local cohomology to get rid of the contribution from the boundary of the compactification.

\vspace{\medskipamount}

Now we come to the proof in full generality. 
\begin{proof}[Proof of Theorem \ref{theorem:pre-k-rational-implies-pre-k-DB}]
The properties of having pre-$k$-Du Bois and pre-$k$-rational singularities are both local, so we can reduce to the case when $X$ is quasi-projective. 

We proceed by induction on both $\dim X$ and $k$. The theorem is trivial when $X$ has dimension $0$, or $k=-1$. Suppose the theorem holds for varieties of dimension at most $n-1$, and up to $k-1$ for varieties of dimension $n$. Consider an $n$-dimensional variety $X$ with pre-$k$-rational singularities. By the induction hypothesis, it suffices to show $\H^i\DB^k_X =0$ for all $i>0$, or equivalently, the morphism $\H^0 \DB^k_X \to \DB^k_X$ is an isomorphism. Let $\Sigma_X$ be the locus where this morphism is not an isomorphism.

We first show that $\dim \Sigma_X \le 0$. Let $H$ be a general hyperplane section of $X$. By Lemma \ref{lemma:conormal-DB-SES}, we have the exact triangle 
    \[\DB^{k-1}_H(-H) \to {\DB^k_X|_H} \to {\DB^k_H} \xto{+1}.\]
    By Theorem \ref{theorem:gen-hyperplane-sections-pre-k-DB}, $H$ is pre-$k$-rational, hence has pre-$k$-Du Bois singularities by the induction hypothesis. The associated long exact sequence of the above exact triangle gives 
    \[(\H^i\DB^k_X)|_H \cong \H^i(\DB^{k}_X|_H) = 0 \text{ for } i>0.\]
    Thus we have 
    \[\Sigma_X \cap H = \emptyset.\]
    Since $H$ is general, we get $\dim \Sigma_X\le 0$. 

    \vspace{\medskipamount}

In a series of steps, we will now prove that $\Sigma_X = \emptyset$, thus completing the proof.

\vspace{\medskipamount}

\noindent \textbf{Step 1.} Let us first show that it suffices to prove $H^i(X,\H^0\DB_X^k)\xrightarrow{f_i} \HH^i(X,\DB_X^k)$ is an isomorphism for all $i$.

Indeed, consider $C^{\bullet}=\Cone\bracket{\H^0\DB_X^k\to \DB_X^k}$, this is supported on $\Sigma_X$. The above isomorphisms imply that $\HH^i(X,C^{\bullet})=0$ for all $i$. Since $\dim \Sigma_X \leq 0$, the spectral sequence
    \[E_2^{pq} = H^p(X,\H^q C^{\bullet})\implies \HH^{p+q}(X,C^{\bullet})\]
degenerates at $E_2$ and $H^0(X,\H^q C^{\bullet})=\HH^q(C^{\bullet})=0$ for all $q$. Thus, we have $\H^q C^{\bullet}=0$ for all $q$. This shows that
    \[\H^0\DB^k_X \to \DB^k_X.\]
is an isomorphism.

\vspace{\medskipamount}

\noindent\textbf{Step 2.} We now show that to prove $f_i$ is an isomorphism for all $i$, it suffices to prove 
\[ \HH^i_{\Sigma_X}(X,\Omega_{X,h}^{\le k}) \xrightarrow{\psi_i} \HH^i_{\Sigma_X}(X,\DB^{\le k}_X)\]
is an isomorphism for all $i$. We do this because we can use arguments from the projective case when considering $\psi_i$ (see Step 4 for more details).

Consider the diagram
\[    \begin{tikzcd} 
	{\HH^{i-1}(X,\Omega_{X,h}^{\le k-1})) } & {\HH^i(X,\H^0(\DB^k_X)[-k]) } & {\HH^i(X,\Omega_{X,h}^{\le k}))} & {\HH^i(X,\Omega_{X,h}^{\le k-1})) } \\
	{\HH^{i-1}(X, \DB^{\le k-1}_X) } & {\HH^i(X,\DB^k_X[-k])} & {\HH^i(X,\DB^{\le k}_X) } & {\HH^i(X, \DB^{\le k-1}_X) }.
	\arrow[from=1-2, to=1-3]
	\arrow[from=1-3, to=1-4]
	\arrow["{f_i[-k]}", from=1-2, to=2-2]
	\arrow[from=2-2, to=2-3]
	\arrow[from=2-3, to=2-4]
	\arrow["{\phi_i}", from=1-3, to=2-3]
	\arrow["{\cong }"', from=1-4, to=2-4]
	\arrow[from=2-1, to=2-2]
	\arrow[from=1-1, to=1-2]
	\arrow["\cong"', from=1-1, to=2-1]
    \end{tikzcd}\]
By the induction hypothesis, $\H^0\DB_X^p\to \DB_X^p$ are isomorphisms for all $p\le k-1$. A simple induction argument shows $\Omega_{X,h}^{\le k-1}\to \DB_X^{\le k-1}$ is an isomorphism, which implies the isomorphisms for the left-most and right-most columns. Thus, to show $f_i$ is an isomorphism for all $i$, it suffices to show that $\phi_i$ is an isomorphism for all $i$.

Now consider the following diagram involving local cohomology
\[    \begin{tikzcd} 
	{\HH^{i-1}(U,\Omega_{U,h}^{\le k})} & {\HH^i_{\Sigma_X}(X,\Omega_{X,h}^{\le k})} & {\HH^i(X,\Omega_{X,h}^{\le k})} & {\HH^i(U,\Omega_{U,h}^{\le k})} \\
	{\HH^{i-1}(U,\DB^{\le k}_U) } & {\HH^i_{\Sigma_X}(X,\DB^{\le k}_X) } & {\HH^i(X,\DB^{\le k}_X) } & {\HH^i(U,\DB^{\le k}_U) }.
	\arrow[from=1-2, to=1-3]
	\arrow[from=1-3, to=1-4]
	\arrow["{\psi_i}", from=1-2, to=2-2]
	\arrow[from=2-2, to=2-3]
	\arrow[from=2-3, to=2-4]
	\arrow["{\phi_i}", from=1-3, to=2-3]
	\arrow["{\cong }"', from=1-4, to=2-4]
	\arrow[from=2-1, to=2-2]
	\arrow[from=1-1, to=1-2]
	\arrow["\cong"', from=1-1, to=2-1]
    \end{tikzcd}\]
 The left-most and right-most maps are isomorphisms because $\H^0\DB^k_X \to \DB^k_X$ is an isomorphism on $U$. Thus, we reduce to showing that $\psi_i$ is an isomorphism for all $i$.

 \vspace{\medskipamount}

\noindent\textbf{Step 3.} We show that $\psi_i$ is injective for all $i$.

Since $X$ is normal and has pre-$k$-rational singularities, it has rational singularities and so, by Proposition \ref{lemma:h0-dual-and-log-differentials} and Remark \ref{remark:comparing-various-differentials}, we have the isomorphism  
    \[\H^0\DB^k_X\cong f_*\Omega^k_{\tilde X}(\log E) = \H^0(\DD_X(\DB_X^{n-k})).\] Moreover, the composition
    \[\H^0\DB^k_X \to \DB^k_X \to \DD_X(\DB_X^{n-k})\]
    is an isomorphism. Taking hypercohomology shows
    \[\HH^i_{\Sigma_X}(X, \H^0 \DB^k_X) \to \HH^i_{\Sigma_X}(X, \DB^k_X)\] 
    is injective for all $i$. Now consider the diagram
    \[    \begin{tikzcd} 
	{\HH^{i-1}_{\Sigma_X}(X,\Omega_{X,h}^{\le k-1})) } & {\HH^i_{\Sigma_X}(X,\H^0(\DB^k_X)[-k]) } & {\HH^i_{\Sigma_X}(X,\Omega_{X,h}^{\le k}))} & {\HH^i_{\Sigma_X}(X,\Omega_{X,h}^{\le k-1})) } \\
	{\HH^{i-1}_{\Sigma_X}(X, \DB^{\le k-1}_X) } & {\HH^i_{\Sigma_X}(X,\DB^k_X[-k])} & {\HH^i_{\Sigma_X}(X,\DB^{\le k}_X) } & {\HH^i_{\Sigma_X}(X, \DB^{\le k-1}_X) }.
	\arrow[from=1-2, to=1-3]
	\arrow[from=1-3, to=1-4]
	\arrow[hook, from=1-2, to=2-2]
	\arrow[from=2-2, to=2-3]
	\arrow[from=2-3, to=2-4]
	\arrow["{\psi_i}", from=1-3, to=2-3]
	\arrow["{\cong }"', from=1-4, to=2-4]
	\arrow[from=2-1, to=2-2]
	\arrow[from=1-1, to=1-2]
	\arrow["\cong"', from=1-1, to=2-1]
    \end{tikzcd}\]
Just as in Step 2, we have by induction that $\Omega_{X,h}^{\le k-1}\to \DB_X^{\le k-1}$ is an isomorphism, which implies the isomorphisms for the left-most and right-most columns. Now by a simple diagram chase, we conclude that $\psi_i$ is injective for all $i$.

\vspace{\medskipamount}

\noindent\textbf{Step 4.} We finish the proof by showing surjectivity of $\psi_i$ for all $i$. 

For this purpose, let $Y$ be a compactification of $X$ with boundary $D = Y \smallsetminus X$. Let $Z = D \bigsqcup \Sigma_X$, which is a closed set in $Y$. Let $U =Y\smallsetminus Z= X \smallsetminus \Sigma_X$, which is the locus where $\H^0\DB^k_X \to \DB^k_X$ is an isomorphism. 
    Since $Y$ is projective, by Proposition \ref{prop:Hodge-theory-surjection},
    \[\HH^i(Y, \Omega_{Y,h}^{\le k}) \twoheadrightarrow \HH^i(Y, \DB^{\le k}_Y)\]
    is surjective for all $i$. In fact, a similar map for local cohomology is also surjective. Indeed, consider the following commutative diagram, whose rows are given by long exact sequences of local cohomology:
    \[\begin{tikzcd} 
	{\HH^{i-1}(U,\Omega_{U,h}^{\le k})} & {\HH^i_Z(Y,\Omega_{Y,h}^{\le k})} & {\HH^i(Y,\Omega_{Y,h}^{\le k})} & {\HH^i(U,\Omega_{U,h}^{\le k})} \\
	{\HH^{i-1}(U,\DB^{\le k}_U) } & {\HH^i_Z(Y,\DB^{\le k}_Y) } & {\HH^i(Y,\DB^{\le k}_Y) } & {\HH^i(U,\DB^{\le k}_U) }
	\arrow[from=1-2, to=1-3]
	\arrow[from=1-3, to=1-4]
	\arrow[from=1-2, to=2-2]
	\arrow[from=2-2, to=2-3]
	\arrow[from=2-3, to=2-4]
	\arrow[two heads, from=1-3, to=2-3]
	\arrow["{\cong }"', from=1-4, to=2-4]
	\arrow[from=2-1, to=2-2]
	\arrow[from=1-1, to=1-2]
	\arrow["\cong"', from=1-1, to=2-1].
    \end{tikzcd}\]
    Diagram chasing shows that the morphism
    \[\HH^i_Z(Y, \Omega_{Y,h}^{\le k}) \twoheadrightarrow \HH^i_Z(Y, \DB^{\le k}_Y)\] 
    is surjective for all $i$. Moreover, this surjection is compatible with the decomposition of local cohomology $\HH^i_Z(Y,\blank) = \HH^i_{\Sigma_X}(Y,\blank) \oplus \HH^i_D(Y,\blank)$. Thus 
    \[\HH^i_{\Sigma_X}(Y, \Omega_{Y,h}^{\le k}) \twoheadrightarrow \HH^i_{\Sigma_X}(Y, \DB^{\le k}_Y)\]
    is surjective for all $i$. By excision, we conclude that the morphism
    \[\HH^i_{\Sigma_X}(X, \Omega_{X,h}^{\le k}) \xrightarrow{\psi_i} \HH^i_{\Sigma_X}(X, \DB^{\le k}_X)\]
    is surjective for all $i$. This completes the proof.
\end{proof} 


\begin{proof}[Proof of Corollary \ref{corollary:k-rational-implies-k-DB}]
    By Theorem \ref{theorem:pre-k-rational-implies-pre-k-DB}, $X$ has pre-$k$-Du Bois singularities. Thus we only need to show that $\Omega^p_X \to \H^0\DB^p_X$ is an isomorphism for all $0\le p\le k$. Since $X$ has rational singularities, we know that the natural map $\Omega^p_X \to \H^0(\DD_X(\DB^{n-p}_X))$ is an isomorphism for all $0 \leq p \leq k$. Thus by Remark \ref{remark:comparing-various-differentials}, we have that $\Omega^p_X \to \H^0\DB^p_X$ is an isomorphism for all $0\le p\le k$, as required.
\end{proof}

As another consequence of Theorem \ref{theorem:pre-k-rational-implies-pre-k-DB}, we extend the results concerning the symmetry of the Hodge-Du Bois numbers in \cite[Corollary 3.21]{FL-Saito} and \cite[Theorem 3.23]{FL-Saito}.
\begin{corollary}\label{cor:Hodge-symmetry}
    If $X$ is a normal proper variety with pre-$k$-rational singularities, then 
        \[\underline{h}^{p,q} = \underline{h}^{q,p} = \underline{h}^{n-p,n-q}\]
        for any $0\le p \le k$ and $0\le q\le n$, where $\underline{h}^{p,q}\coloneqq \dim  \HH^q(Y,\DB_X^p)$.
\end{corollary}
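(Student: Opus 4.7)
The plan is to leverage Theorem \ref{theorem:pre-k-rational-implies-pre-k-DB} to identify $\DB_X^p$ with a concrete sheaf for $p \le k$, and then combine Grothendieck-Serre duality on the projective variety $X$ with a Hodge-theoretic analysis of the mixed Hodge structure on $H^\bullet(X, \CC)$, following the strategy of \cite[Corollary 3.21]{FL-Saito} and \cite[Theorem 3.23]{FL-Saito}. There are two equalities to verify: a duality symmetry and a Hodge (bidegree) symmetry.

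For the duality symmetry $\underline{h}^{p,q} = \underline{h}^{n-p,n-q}$, I would first note that by Theorem \ref{theorem:pre-k-rational-implies-pre-k-DB}, $X$ has pre-$k$-Du Bois singularities, so $\DB_X^p \simeq \H^0 \DB_X^p$ for $0 \le p \le k$. Combined with the pre-$k$-rational hypothesis, which makes $\DD_X(\DB_X^{n-p})$ concentrated in cohomological degree zero, and the isomorphism $\H^0 \DB_X^p \simeq \H^0 \DD_X(\DB_X^{n-p})$ for normal $X$ from Remark \ref{remark:comparing-various-differentials}, one obtains a quasi-isomorphism $\DB_X^p \simeq \DD_X(\DB_X^{n-p})$ in the derived category. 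Applying $\DD_X$ yields $\DD_X(\DB_X^p) \simeq \DB_X^{n-p}$, and Grothendieck-Serre duality on the projective $X$ then produces
\[
\HH^q(X, \DB_X^p)^\vee \cong \HH^{n-q}(X, \DD_X(\DB_X^p)) \cong \HH^{n-q}(X, \DB_X^{n-p}),
\]
which gives $\underline{h}^{p,q} = \underline{h}^{n-p,n-q}$ for $0 \le p \le k$ and $0 \le q \le n$.

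For the Hodge symmetry $\underline{h}^{p,q} = \underline{h}^{q,p}$, the plan is to invoke the $E_1$-degeneration of the Hodge-de Rham spectral sequence (Proposition \ref{proposition:facts-about-DB-complex}(1)) to realize $\underline{h}^{p,q}$ as $\dim \Gr^p_F H^{p+q}(X, \CC)$, and then apply complex conjugation on each weight-graded piece of the mixed Hodge structure. Concretely, one can combine the identification $\DB_X^p \simeq f_*\Omega^p_{\tilde X}(\log E)$ — provided by Kebekus-Schnell under our hypotheses together with the pre-$k$-rational vanishing $R^i f_*\Omega^p_{\tilde X}(\log E) = 0$ — with Deligne's Hodge theory on the open variety $U = X \setminus X_{\sing} = \tilde X \setminus E$, giving $\HH^q(X, \DB_X^p) \cong H^q(\tilde X, \Omega^p_{\tilde X}(\log E))$ as a graded piece of the Hodge filtration on the MHS of $U$. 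The main obstacle will be to isolate the weight-$(p+q)$ pure part of $H^{p+q}(X, \CC)$ (or of $H^{p+q}(U, \CC)$) that contributes to $\Gr^p_F$ when $p \le k$, since Hodge symmetry only holds for pure Hodge structures; this purity in the pertinent bidegree range is expected to follow from the pre-$k$-rational condition restricting the relevant weights, by the same type of analysis as in \cite[Theorem 3.23]{FL-Saito}.
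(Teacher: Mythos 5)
Your proposal matches the paper's proof: the key step in both is to show that the natural map $\DB_X^p \to \DD_X(\DB_X^{n-p})$ is a quasi-isomorphism for $p \le k$, obtained exactly as you do by combining Theorem \ref{theorem:pre-k-rational-implies-pre-k-DB} (so both sides are concentrated in degree $0$) with the $\H^0$-identification of Lemma \ref{lemma:h0-dual-and-log-differentials} and Remark \ref{remark:comparing-various-differentials}. The remaining duality and Hodge-symmetry arguments are then imported verbatim from \cite[Corollary 3.21, Theorem 3.23]{FL-Saito}, just as in your outline.
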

\begin{proof}
    The same proofs as in \cite[Corollary 3.21]{FL-Saito} and \cite[Theorem 3.23]{FL-Saito} work, provided one can show
    \[\psi_p:\DB^p_X \to \DD_X (\DB^{n-p}_X)\]
    is a quasi-isomorphism, for $p\le k$. Indeed, $\psi_p$ induces an isomorphism on $\H^0$ by Lemma \ref{lemma:h0-dual-and-log-differentials} and Remark \ref{remark:comparing-various-differentials}. For $i>0$, $\H^i$ of both sides vanish since $X$ is normal and pre-$k$-rational, hence pre-$k$-Du Bois by Theorem \ref{theorem:pre-k-rational-implies-pre-k-DB}. It follows that $\psi_p$ is a quasi-isomorphism for all $p\le k$.
\end{proof}

The proof of Theorem \ref{theorem:pre-k-rational-implies-pre-k-DB} also gives us the following result analogous to \cite[Corollary 2.5]{kovacs-rational-singularities-are-DB}.

\begin{proposition}\label{proposition-boutot-pre-k-DB}
    Let $\pi:Y \to X$ be a finite dominant morphism of normal varieties.
    \begin{enumerate}
        \item If $Y$ has rational singularities and pre-$k$-Du Bois singularities, then $X$ also has pre-$k$-Du Bois singularities.
        \item If $Y$ is smooth, then $X$ has pre-$k$-rational singularities for all $k \geq 0$.
    \end{enumerate}
\end{proposition}
\begin{proof}
(1) Observe first that $X$ has rational singularities by \cite[Theorem 1]{kovacs-characterization-of-rational-singularities}. We claim that for each $p \leq k$, there exists a map $\varphi:\DB_X^p \to \H^0\DB_X^p$ such that the composition
    \[\H^0\DB_X^p\to \DB_X^p\xto{\varphi} \H^0\DB_X^p\]
    is an isomorphism. Then the same proof as in Theorem \ref{theorem:pre-k-rational-implies-pre-k-DB}, with $\DD_X(\DB_X^{n-p})$ replaced by $\H^0\DB_X^p$, shows that $X$ has pre-$k$-Du Bois singularities.
    

    Consider the natural maps for each $p \leq k$:
    \[ \H^0\DB^p_X \to \DB^p_X \xrightarrow{\alpha} R\pi_*\DB^p_Y \simeq \pi_*\H^0\DB^p_Y \]
    where the isomorphism comes from the fact that $\pi$ is finite and $Y$ has pre-$k$-Du Bois singularities. Now since $X$ and $Y$ have rational singularities, by Remark \ref{remark:comparing-various-differentials} we have natural isomorphisms:
    \[ \H^0\DB^p_X \simeq (\Omega^p_X)^{\vee\vee}, \quad \H^0\DB^p_Y \simeq (\Omega^p_Y)^{\vee\vee}. \]
    Observe that we can find open subsets $U \subset X$ and $V \subset Y$ such that $U, V$ are smooth, $\pi^{-1}(U) = V$, and the complements $X \setminus U$, $Y \setminus V$ are of codimension $\geq 2$. We get the following commutative diagram:
    \[
    \begin{tikzcd}
        V \ar[r, hook, "i"] \ar[d, swap, "\pi'"] & Y \ar[d, "\pi"]\\
        U \ar[r, hook, "j"] & X.
    \end{tikzcd}
    \]
    By \cite{zannier-traces-of-differential-forms} applied to the finite map $\pi' : V \to U$, we have a trace map on differential forms:
    \[ \pi'_*\Omega^p_{V} \xrightarrow{\psi} \Omega^p_{U}, \]
    which is a left inverse to the natural inclusion. Therefore we get
    \begin{align*}
        \pi_*\left((\Omega^p_Y)^{\vee\vee} \right) \simeq \pi_*i_*\Omega^p_{V} \simeq j_*\pi'_*\Omega^p_{V} \xrightarrow{j_*\psi} j_*\Omega^p_{U} = (\Omega^p_X)^{\vee\vee}.
    \end{align*}
    Composing this map with $\alpha$ gives the desired $\varphi:\DB_X^p \to \H^0\DB_X^p$. 

(2) By the previous part, we have that $X$ has rational singularities and pre-$k$-Du Bois singularities for all $k \geq 0$. Hence 
    \[\DB_X^k \cong (\Omega_X^k)^{\vee \vee}\]
    for all $k \geq 0$. Consider the natural maps
    \[(\Omega_X^k)^{\vee \vee} \to \pi_*(\Omega_Y^k)^{\vee \vee}\] 
    with a left-inverse defined by the trace map as above. This implies that $(\Omega_X^k)^{\vee\vee}$ is a direct summand of $\pi_*(\Omega_Y^k)^{\vee \vee}$. Since $Y$ is smooth, $(\Omega_Y^k)^{\vee\vee}$ is a Cohen-Macaulay sheaf; by \cite[Proposition 5.4]{kollar-bir-geom}, we get $\pi_*(\Omega_Y^k)^{\vee \vee}$ is also Cohen-Macaulay. Thus $(\Omega_X^k)^{\vee\vee}$, as a direct summand of $\pi_*(\Omega_Y^k)^{\vee \vee}$, is Cohen-Macaulay for all $k \geq 0$. This implies 
    \[\H^i(\DD_X(\DB_X^{n-k})) \cong \sExt^{i-n}_{\O_X}((\Omega_X^{n-k})^{\vee \vee}, \omega_X^{\bullet}) = 0\]
    for $i>0$, by \cite[\href{https://stacks.math.columbia.edu/tag/0A7U}{Tag 0A7U}]{stacks-project}.
\end{proof}

As a corollary, we recover the following classical result (\cite[Theorem 5.3]{DB}, \cite[Corollary 2.47]{peters-steenbrink}) about finite quotient singularities.
\begin{corollary}\label{cor:quotient-singularities-pre-n-DB}
    Let $Y$ be a smooth affine variety and let $G$ be a finite group acting on $Y$. Then the quotient $X:= Y/G$ has pre-$k$-rational singularities (hence pre-$k$-Du Bois singularities) for all $k \geq 0$.
\end{corollary}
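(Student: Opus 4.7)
The plan is to apply Proposition \ref{proposition-boutot-pre-k-DB} directly to the quotient map $\pi : Y \to X = Y/G$, so the task reduces to verifying its hypotheses. We may as well assume $Y$ is irreducible, since otherwise $Y$ decomposes as a disjoint union of smooth affine varieties, $G$ permutes the components, and $Y/G$ is then a disjoint union of quotients of the form $Y_\alpha / G_\alpha$ for $G_\alpha$ the stabilizer of a component $Y_\alpha$; since the pre-$k$-Du Bois condition is local, it suffices to treat each such quotient separately.

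Next I would check the three required inputs of Proposition \ref{proposition-boutot-pre-k-DB}. The map $\pi$ is finite and surjective (hence dominant): by elementary invariant theory $X = \Spec(\mathcal{O}_Y(Y)^G)$ and $\mathcal{O}_Y(Y)$ is a finite $\mathcal{O}_Y(Y)^G$-module. Both $Y$ and $X$ are normal: $Y$ because it is smooth, and $X$ because the ring of invariants of a normal domain under a finite group action is again normal. Also, $Y$ has rational singularities trivially, being smooth. Finally, smoothness of $Y$ gives $\DB_Y^p \simeq \Omega_Y^p$ as a coherent sheaf concentrated in cohomological degree zero; in particular $\H^i \DB_Y^p = 0$ for every $i > 0$ and $p \geq 0$, so $Y$ is pre-$k$-Du Bois for all $k \geq 0$.

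With all hypotheses verified, Proposition \ref{proposition-boutot-pre-k-DB} immediately gives that $X = Y/G$ has pre-$k$-Du Bois singularities for every $k \geq 0$. There is no genuine obstacle; the corollary is essentially a dictionary translation, and the only mild subtlety is the reduction to the irreducible case at the start, which is handled by locality of the pre-$k$-Du Bois condition.
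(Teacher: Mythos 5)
Your proof is correct and matches the paper's intended argument exactly: the paper deduces this corollary by applying Proposition \ref{proposition-boutot-pre-k-DB} to the finite quotient map $\pi: Y \to Y/G$, using that a smooth variety trivially has rational and pre-$k$-Du Bois singularities. Your verification of the hypotheses (finiteness and dominance of $\pi$, normality of $X$ and $Y$) and the reduction to the irreducible case are routine and correctly handled.
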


\begin{remark}\label{remark: equiv-def-pre-k-rat}
    Another possible definition for pre-$k$-rational-singularities is that 
        \[R^if_*\Omega^p_{\tilde X}(\log E) = 0\]
        for all $i>0$ and $0\le p\le k$; where $f:\tilde X \to X$ is any strong log resolution of singularities of $X$ with reduced exceptional divisor $E$. By Lemma \ref{lemma:h0-dual-and-log-differentials}(1), the two definitions agree if $k < \codim_X (X_{\sing})$. With this definition, stability under general hyperplane sections and the implication pre-$k$-rational $\implies$ pre-$k$-Du Bois continue to hold with similar proofs, using the exact sequence
        \[0\to \Omega^{p-1}_H(-H)\to \Omega_X^p(\log E)|_H \to \Omega_H^p(\log F)\to 0\]
        (where $F = E\cap H$) in place of the dual of the conormal sequence of Lemma \ref{lemma:conormal-DB-SES}. 
        However, the two definitions of pre-$k$-rational need not agree; see Remark \ref{remark:two-defn-k-rational-distinct} below. We choose to work with Definition \ref{definition:pre-k-DB-rational} because of the partial Hodge-Du Bois symmetry result in Corollary \ref{cor:Hodge-symmetry}.
\end{remark}

\section{New notions of $k$-Du Bois and $k$-rational singularities}\label{section:definitions}

Due to the non-reflexivity of the K\"ahler differentials of non-lci varieties, the definitions of strict $k$-Du Bois and strict $k$-rational singularities seem too restrictive to include interesting examples outside the lci case. As an explicit example, we have

\begin{example} \label{ex: cone-over-Pr}
The cone $Z$ over the $d$-th Veronese embedding of $\PP^r$ is not strict $1$-Du Bois, unless $d=1$, in which case $Z \cong \PP^{r+1}$ is smooth. Indeed, since  $Z$ is rational, if it has strict $1$-Du Bois singularities, then the K\"ahler differential $\Omega_Z^1 = \H^0\DB_X^1$ is reflexive by Remark \ref{remark:comparing-various-differentials}. However, this is not the case by \cite[Proposition 10]{greb-rollenske-torsion-kahler}, unless $d=1$.
Moreover, since $Z$ is a toric variety, it has finite quotient singularities, and is log terminal (even terminal when $r+1 >d$), which are mild singularities from the perspective of the Minimal Model Program.
\end{example}


In this section we take a closer look at the new definitions of $k$-Du Bois and $k$-rational singularities that we have proposed in this paper.
These definitions agree with the conditions (\ref{def:old-k-DB}) and (\ref{def:old-k-rat}) in the lci case, but are less restrictive in general, and in particular include examples like cones over Veronese varieties. 

\begin{definition} \label{def: new k-Du Bois}
Let $X$ be a variety. $X$ has \textbf{$k$-Du Bois singularities} if
\begin{enumerate}
\item $X$ is seminormal;
\item \label{condition: db-codim} $\codim_X (X_{\sing}) \ge 2k+1$;
\item \label{condition: db-pre} $X$ has pre-$k$-Du Bois singularities. 
\item \label{condition: db-ref} $\H^0\DB_X^p$ is reflexive, for all $p \le k$.
\end{enumerate}
\end{definition}

\begin{definition} \label{def: new k-rational}
Let $X$ be a variety. $X$ is said to have \textbf{$k$-rational singularities} if 
\begin{enumerate}
\item $X$ is normal;
\item \label{condition: rat-codim} $\codim_X (X_{\sing}) > 2k+1$;
\item \label{condition: rati-pre} For all $i>0$ and $0\le p\le k$, 
\[\bR^if_*\Omega^p_X(\log E) = 0\]
for any strong log resolution $f:\tilde X \to X$. 
\end{enumerate}
\end{definition}

\begin{remark} \label{remark:new-notions}
\begin{enumerate}
    \item Lemma \ref{lemma:h0-dual-and-log-differentials}(1) shows that if $p < \codim_X (X_{\sing})$, then 
    \[\bR f_*\Omega^p_{\tilde X}(\log E) \cong \DD_X(\DB^{n-p}_X).\]
    Thus condition (\ref{condition: rati-pre}) in Definition \ref{def: new k-rational} can be replaced by (\ref{condition: rati-pre}') that $X$ has pre-$k$-rational singularities. 
    \item There is no analogous condition (\ref{condition: db-ref}) in the definition of $k$-rational singularities. This is because if $X$ has $k$-rational singularities, it has rational singularities by Proposition \ref{proposition:k-DB-rational-0} below, and so by Remark \ref{remark:comparing-various-differentials}, $f_*\Omega^p_{\tilde{X}}(\log E)$ is reflexive.
    \item Conditions (\ref{condition: db-codim}) in both definitions are important properties enjoyed by $k$-Du Bois and $k$-rational singularities in the lci case (see \cite[Corollary 9.26]{MP-local-cohomology} and \cite[Corollary 1.3]{CDM-k-rational}), and they take presence in various arguments involving these singularities, most notably in the characterization of $k$-Du Bois and $k$-rational singularities in terms of the minimal exponents (\cite{MOPW}, \cite{JKSY}, \cite{MP-lci}, \cite{CDM-k-rational}), and in the proof of the local freeness theorem \cite[Theorem 1.2]{FL-Saito}. The mere vanishing of higher cohomology groups as imposed by pre-$k$-Du Bois or pre-$k$-rational singularities is not expected to ensure the validity of these deeper aspects of the theory of $k$-Du Bois and $k$-rational singularities. We hope that with the addition of the codimension bound condition (\ref{condition: db-codim}), one can eventually extend the local freeness theorem, among other results, to cases beyond local complete intersections. 
\end{enumerate}
\end{remark}

When $k=0$, we recover the usual notions of rational and Du Bois singularities.
\begin{proposition}\label{proposition:k-DB-rational-0}
A variety $X$ has $0$-Du Bois singularities if and only if it has Du Bois singularities. Similarly, it has $0$-rational singularities if and only if it has rational singularities.
\begin{proof}
If $X$ is Du Bois, then $\DB_X^0$ is quasi-isomorphic to $\O_X$, so conditions (\ref{condition: db-pre}) and (\ref{condition: db-ref}) are satisfied, because $\H^0\DB_X^0=\O_X$ is reflexive. By \cite[Corollary 0.3]{Saito-MHC}, $X$ is seminormal. Moreover, condition (\ref{condition: db-codim}) is vacuous when $k=0$. It follows that $X$ is $0$-Du Bois. Conversely, if $X$ is $0$-Du Bois, seminormality of $X$ implies $\H^0\DB_X^0 = \O_X$, by \cite[Corollary 0.3]{Saito-MHC}. This, together with condition (\ref{condition: db-pre}), implies $X$ is Du Bois. 

Note that by definition, a variety has rational singularities if and only if it is pre-$0$-rational and normal. Since $\codim_X (X_{\sing}) \ge 2$ when $X$ is normal, having $0$-rational singularities is equivalent to having rational singularities.
\end{proof}
\end{proposition}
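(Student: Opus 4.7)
My plan is to treat the two equivalences separately, in each case unpacking the new definition at $k=0$ and matching it against the classical one via a standard structural result, with no genuinely new ingredients needed.

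For the Du Bois equivalence, the key input I will invoke is Saito's characterization (\cite[Corollary 0.3]{Saito-MHC}) that the canonical map $\O_X \to \H^0\DB_X^0$ is an isomorphism precisely when $X$ is seminormal. Given this, the forward direction is immediate: if $X$ has $0$-Du Bois singularities in the sense of Definition \ref{def: new k-Du Bois}, the pre-$0$-Du Bois condition kills $\H^i\DB_X^0$ for $i>0$, while seminormality together with Saito's theorem identifies $\H^0\DB_X^0$ with $\O_X$; combining the two yields $\O_X \xrightarrow{\sim} \DB_X^0$, i.e.\ $X$ is Du Bois. Conversely, if $X$ is Du Bois, I observe that all four conditions in Definition \ref{def: new k-Du Bois} at $k=0$ hold automatically: the codimension bound $\codim_X(X_{\sing}) \ge 1$ is vacuous, while the isomorphism $\DB_X^0 \simeq \O_X$ supplies pre-$0$-Du Bois and the reflexivity of $\H^0\DB_X^0 = \O_X$; seminormality is then again given by Saito.

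For the rational equivalence, I will first note that normality of $X$ forces $\codim_X(X_{\sing}) \ge 2 > 1$, so the codimension bound in Definition \ref{def: new k-rational} is automatic. Using Remark \ref{remark:new-notions}(1), condition (\ref{condition: rati-pre}) is equivalent to $X$ being pre-$0$-rational; since $p = 0 < \codim_X(X_{\sing})$, Lemma \ref{lemma:h0-dual-and-log-differentials}(1) lets me identify this in turn with the usual vanishing $R^i f_*\O_{\tilde X} = 0$ for all $i>0$, which combined with normality is exactly the classical definition of rational singularities.

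The only real obstacle is knowing and citing the right characterization of seminormality through $\H^0\DB_X^0$; once that ingredient is in hand, both equivalences amount to a tautological unwinding of definitions, so I do not anticipate any substantive difficulties.
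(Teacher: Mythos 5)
Your argument is correct and follows essentially the same route as the paper: both directions of the Du Bois equivalence rest on Saito's identification of $\H^0\DB_X^0$ with the structure sheaf of the seminormalization, and the rational equivalence reduces, via the codimension bound from normality and Lemma \ref{lemma:h0-dual-and-log-differentials}(1), to the classical vanishing $R^if_*\O_{\tilde X}=0$. No gaps.
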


When $X$ is lci, our new notions agree with the well-studied conditions (\ref{def:old-k-DB}) and (\ref{def:old-k-rat}).

\begin{proposition} \label{proposition: def-equiv-lci}
Let $X$ be a local complete intersection. Then $X$ has $k$-Du Bois singularities if and only if $X$ has strict $k$-Du Bois singularities. Similarly,  $X$ has $k$-rational singularities if and only if $X$ has strict $k$-rational singularities.
\begin{proof}
First, assume $X$ has strict $k$-Du Bois singularities. Clearly, $X$ is seminormal, and condition (\ref{condition: db-pre}) is satisfied. By \cite[Corollary 9.26]{MP-local-cohomology}, the codimension bound (\ref{condition: db-codim}) is satisfied. Since $X$ is lci, by \cite[Corollary 3.1]{MV-kahler-differential-lci}, we have $\H^0\DB_X^p\cong \Omega_X^p$ is reflexive when $p\le \codim_X (X_{\sing})-2$; hence condition (\ref{condition: db-ref}) follows from (\ref{condition: db-codim}). 
Conversely, assume $X$ has $k$-Du Bois singularities. Then \cite[Corollary 3.1]{MV-kahler-differential-lci} and the codimension bound (\ref{condition: db-codim}) implies
$\Omega_X^p$ is reflexive, for all $p\le k$. It follows that $X$ has strict $k$-Du Bois singularities.

Next, we assume $X$ has strict $k$-rational singularities. Clearly, $X$ is normal and pre-$k$-rational. By \cite[Corollary 1.3]{CDM-k-rational}, $X$ also satisfies condition (\ref{condition: rat-codim}).
Conversely, if $X$ has $k$-rational singularities, we just need to prove that $\Omega^p_X \cong f_*\Omega^p_{\tilde{X}}(\log E)$. Because of condition (\ref{condition: rat-codim}), we have $\Omega_X^p\cong \Omega_X^{[p]}$ by \cite[Corollary 3.1]{MV-kahler-differential-lci} and by Remark \ref{remark:comparing-various-differentials}, we have $f_*\Omega^p_{\tilde{X}}(\log E) \cong \Omega_X^{[p]}$. So $X$ has strict $k$-rational singularities.
\end{proof}
\end{proposition}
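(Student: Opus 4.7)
The plan is to treat the two equivalences in parallel, proving each direction separately. The main ingredients available are: (a) the codimension bounds for strict $k$-Du Bois/$k$-rational lci singularities from \cite[Corollary 9.26]{MP-local-cohomology} and \cite[Corollary 1.3]{CDM-k-rational}; (b) the reflexivity result \cite[Corollary 3.1]{MV-kahler-differential-lci}, which says that $\Omega_X^p \cong \Omega_X^{[p]}$ when $X$ is lci and $p \le \codim_X(X_{\sing}) - 2$; (c) the identification of $f_*\Omega^p_{\tilde X}(\log E)$ with $\Omega_X^{[p]}$ (and with $\H^0 \DB^p_X$) on a variety with rational singularities, recorded in Remark \ref{remark:comparing-various-differentials}; and (d) the $k=0$ case, i.e., the classical characterization of Du Bois/rational singularities given by Proposition \ref{proposition:k-DB-rational-0}.

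For the $k$-Du Bois equivalence, the direction strict $\Rightarrow$ new is essentially formal: pre-$k$-Du Bois follows from $\Omega^p_X \xrightarrow{\sim} \DB^p_X$ by taking $\H^i$; seminormality is the $k=0$ specialization; the codimension bound is (a); and reflexivity of $\H^0 \DB^p_X \cong \Omega^p_X$ is obtained by combining the codimension bound with (b), after checking that $\codim_X(X_{\sing}) \ge 2k+1$ forces $p \le k \le \codim_X(X_{\sing})-2$ for $k \ge 1$ (with $k=0$ being trivial since $\Omega_X^0 = \O_X$ is always reflexive on a seminormal variety). For the converse, the pre-$k$-Du Bois condition collapses $\DB^p_X$ to $\H^0 \DB^p_X$, so it suffices to show that the natural map $\Omega^p_X \to \H^0 \DB^p_X$ is an isomorphism for $p \le k$; by (b), the codimension bound, and the assumed reflexivity of $\H^0 \DB^p_X$, both source and target are reflexive sheaves that agree with $\Omega^p$ on the smooth locus, hence are isomorphic.

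The $k$-rational equivalence follows the same pattern, using Remark \ref{remark:new-notions}(1) to restate condition (3) of Definition \ref{def: new k-rational} as pre-$k$-rational. The direction strict $\Rightarrow$ new uses \cite[Corollary 1.3]{CDM-k-rational} for the codimension bound and the $k=0$ case of Proposition \ref{proposition:k-DB-rational-0} for normality; the pre-$k$-rational condition is immediate from $\Omega^p_X \xrightarrow{\sim} \DD_X(\DB^{n-p}_X)$. For the converse, since the new definition implies rational singularities (the $k=0$ case), Remark \ref{remark:comparing-various-differentials} identifies $\H^0 \DD_X(\DB^{n-p}_X) \cong f_*\Omega^p_{\tilde X}(\log E) \cong \Omega_X^{[p]}$, while \cite[Corollary 3.1]{MV-kahler-differential-lci} together with the codimension bound $\codim_X(X_{\sing}) > 2k+1$ gives $\Omega_X^p \cong \Omega_X^{[p]}$; combining these shows that $\Omega^p_X \to \DD_X(\DB^{n-p}_X)$ is a quasi-isomorphism.

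The only real obstacle is the bookkeeping of numerical inequalities: one must verify that $\codim_X(X_{\sing}) \ge 2k+1$ (respectively $> 2k+1$) is enough to apply \cite[Corollary 3.1]{MV-kahler-differential-lci} at all $p \le k$, and handle the boundary case $k=0$ separately, where $\Omega_X^0 = \O_X$ is reflexive on any seminormal (or normal) variety directly rather than via \cite{MV-kahler-differential-lci}. Once the inequalities are arranged, the whole argument reduces to the principle that two reflexive sheaves agreeing on the complement of a codimension $\ge 2$ subset are canonically isomorphic.
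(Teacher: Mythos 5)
Your proposal is correct and follows essentially the same route as the paper: the same three external inputs (the codimension bounds of \cite[Corollary 9.26]{MP-local-cohomology} and \cite[Corollary 1.3]{CDM-k-rational}, the reflexivity statement \cite[Corollary 3.1]{MV-kahler-differential-lci}, and the identifications of Remark \ref{remark:comparing-various-differentials}) are combined in the same way in each of the four directions. The only difference is one of exposition: you spell out the numerical check $p\le k\le \codim_X(X_{\sing})-2$ and the ``reflexive sheaves agreeing in codimension one are isomorphic'' step that the paper leaves implicit.
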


\begin{remark} \label{remark: comparision-of-two-definitions}
While the two notions agree when $X$ is lci, we do not know whether one implies the other in general.
\end{remark}

With these new definitions, the analogues of Theorems \ref{theorem:gen-hyperplane-sections-pre-k-DB} and \ref{theorem:pre-k-rational-implies-pre-k-DB} continues to hold, with some extra mild assumptions on $X$.
\begin{corollary}[=Theorem \ref{corollary:new-k-rat-implies-new-k-DB}]
Let $X$ be a quasi-projective variety. We have
\begin{enumerate}[label = (\alph*)]
    \item If $X$ has rational and $k$-Du Bois (resp. $k$-rational) singularities, then so does a general hyperplane section $H$ of $X$.
    \item If $X$ has $k$-rational singularities, then it has $k$-Du Bois singularities.
\end{enumerate}
\end{corollary}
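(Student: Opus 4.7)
The plan is to reduce both parts to the corresponding ``pre-" versions established earlier (Theorems \ref{theorem:gen-hyperplane-sections-pre-k-DB} and \ref{theorem:pre-k-rational-implies-pre-k-DB}), and then verify the remaining conditions in Definitions \ref{def: new k-Du Bois} and \ref{def: new k-rational}---seminormality/normality, the codimension bound on $X_{\sing}$, and reflexivity of $\H^0\DB_X^p$---by standard Bertini-type considerations and Remark \ref{remark:comparing-various-differentials}.

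For part (a), I will take $H \subset X$ to be a general hyperplane section. In both scenarios $X$ has rational singularities: by hypothesis in the $k$-Du Bois case, and because $k$-rational implies rational (via Proposition \ref{proposition:k-DB-rational-0} together with Remark \ref{remark:new-notions}(1)) in the $k$-rational case. I would first show that $H$ itself is rational: normality of $H$ follows from classical Bertini, and since $X$ is in particular pre-$0$-rational, Theorem \ref{theorem:gen-hyperplane-sections-pre-k-DB} applied at $k=0$ gives that $H$ is pre-$0$-rational, which combined with normality yields rational singularities on $H$. Next, since for general $H$ we have $H_{\sing} \subseteq X_{\sing} \cap H$ with $\codim_H(X_{\sing} \cap H) = \codim_X(X_{\sing})$, the codimension bound ($\ge 2k+1$ or $> 2k+1$) is preserved from $X$ to $H$. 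The pre-$k$-Du Bois (resp. pre-$k$-rational) condition on $H$ is given by Theorem \ref{theorem:gen-hyperplane-sections-pre-k-DB}. Finally, in the $k$-Du Bois case, the reflexivity of $\H^0\DB_H^p$ for $p\le k$ follows because $H$ is rational, so Remark \ref{remark:comparing-various-differentials} identifies $\H^0\DB_H^p$ with the reflexive sheaf $\Omega_H^{[p]}$.

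For part (b), assuming $X$ has $k$-rational singularities, I will check the four conditions of Definition \ref{def: new k-Du Bois} in turn. Normality of $X$ gives seminormality; the strict inequality $\codim_X(X_{\sing}) > 2k+1$ automatically yields the required non-strict bound $\ge 2k+1$; the pre-$k$-Du Bois property is exactly the content of Theorem \ref{theorem:pre-k-rational-implies-pre-k-DB}; and reflexivity of $\H^0\DB_X^p$ for $p\le k$ follows at once from Remark \ref{remark:comparing-various-differentials}, since $X$ is rational.

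The ``obstacle" in this argument is essentially only organizational: once Theorems \ref{theorem:gen-hyperplane-sections-pre-k-DB} and \ref{theorem:pre-k-rational-implies-pre-k-DB} are available, the additional conditions built into the new definitions are either direct consequences of rationality (via Remark \ref{remark:comparing-various-differentials}) or preserved under general hyperplane sections by classical arguments. The most delicate point is ensuring that rationality itself passes to $H$ in part (a), which I handle by recognizing rationality as ``pre-$0$-rational plus normal" and invoking the $k=0$ case of Theorem \ref{theorem:gen-hyperplane-sections-pre-k-DB}.
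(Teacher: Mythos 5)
Your proposal is correct and follows essentially the same route as the paper: reduce to the ``pre-'' statements via Theorems \ref{theorem:gen-hyperplane-sections-pre-k-DB} and \ref{theorem:pre-k-rational-implies-pre-k-DB}, preserve the codimension bound using $H_{\sing}\subseteq X_{\sing}\cap H$, and obtain reflexivity of $\H^0\DB^p$ from rationality via Remark \ref{remark:comparing-various-differentials}. Your extra step of justifying that $H$ inherits rational singularities (normality by Bertini plus the $k=0$ case of Theorem \ref{theorem:gen-hyperplane-sections-pre-k-DB}) is a slightly more explicit version of what the paper simply asserts, not a different argument.
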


\begin{proof}
First, we prove part $(a)$. In both statements, conditions (\ref{condition: db-codim}) follows because $H_{\sing}\subset X_{\sing}\cap H$, and condition (\ref{condition: db-pre}) is immediate from Theorem \ref{theorem:gen-hyperplane-sections-pre-k-DB}. It suffices to show if $X$ has $k$-Du Bois singularities, then a general hyperplane $H$ of $X$ satisfies condition (\ref{condition: db-ref}). Since $X$ has rational singularities, $H$ also has rational singularities. Thus Remark \ref{remark:comparing-various-differentials} implies that condition (\ref{condition: db-ref}) is satisfied for $H$ as well.

Now, we are left with part $(b)$. Condition (\ref{condition: rat-codim}) is clear, and condition (\ref{condition: rati-pre}) follows from Theorem \ref{theorem:pre-k-rational-implies-pre-k-DB}. Since $X$ has rational singularities, Remark \ref{remark:comparing-various-differentials} implies $\H^0\DB_X^p$ is reflexive for all $p$. Thus $X$ has $k$-Du Bois singularities.
\end{proof}

Our new definitions have the advantage of including large classes of non-lci examples.
\begin{example}
Toric varieties (\cite[Theorem 4.6]{GNPP}) and quotient singularities (Corollary \ref{cor:quotient-singularities-pre-n-DB}) have $k$-Du Bois singularities for $k\le(\codim_X (X_{\sing})-1)/2$. 

Similarly, by Proposition \ref{criteria:toric-varieties}, simplicial toric varieties have $k$-rational singularities for $k<(\codim_X(X_{\sing})-1)/2$. 
On the other hand, again by Proposition \ref{criteria:toric-varieties}, non-simplicial toric varieties have rational singularities but do not have $1$-rational singularities.
\end{example}

Criteria for cones over smooth projective varieties to have $k$-Du Bois and $k$-rational singularities can be given as follows. We will study cones extensively in Section \ref{section:cones}. In particular, 
it follows from Corollary \ref{cor:reflexive-push-forward} and Proposition \ref{corollary: pre-k-Du Bois for cone} that

\begin{proposition}[$k$-Du Bois criterion for cones] \label{prop: new-k-Du Bois for cone}
Let $X$ be a smooth projective variety and $L$ an ample line bundle on $X$. Let $Z=C(X,L)$ be the cone over $X$ with conormal bundle $L$. Then for $k\ge 1$, we have $Z$ has $k$-Du Bois singularities if and only if 
\begin{enumerate}
\item $k\le\frac{\dim X}{2}$;
\item $H^p(X,\O_X)=0$ for all $1\le p\le k$;
\item $H^i(\Omega_X^p\otimes L^m)=0$ for all $i\ge 1, m\ge 1$, and $0\le p\le k$.
\end{enumerate}
\end{proposition}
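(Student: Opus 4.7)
The plan is to verify each of the four clauses of Definition \ref{def: new k-Du Bois} for $Z = C(X,L)$ in turn, matching them against the three conditions of the proposition by invoking the cone analysis of Section \ref{section:cones}.

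Since $X$ is smooth projective and $L$ is ample, $Z$ is normal (in particular seminormal), and its singular locus consists of the vertex alone. Thus $\codim_Z(Z_{\sing}) = n+1$ with $n = \dim X$, so the codimension bound (\ref{condition: db-codim}) is equivalent to $k \le n/2$, which is condition (1) of the proposition. In particular the normality hypothesis rules out nothing beyond what already appears in the statement.

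Next, the pre-$k$-Du Bois clause (\ref{condition: db-pre}) is, by Proposition \ref{corollary: pre-k-Du Bois for cone} (the cone criterion for pre-$k$-Du Bois singularities), equivalent to the vanishings $H^i(X, \Omega^p_X \otimes L^m) = 0$ for all $i \ge 1$, $m \ge 1$, and $0 \le p \le k$---this is exactly condition (3) of the proposition.

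Finally, the reflexivity of $\H^0 \DB^p_Z$ required in (\ref{condition: db-ref}) is the content of Corollary \ref{cor:reflexive-push-forward}, which characterizes it for cones in terms of the vanishings $H^p(X, \O_X) = 0$ for $1 \le p \le k$, i.e., condition (2) of the proposition. This is the substantive input: the strategy there is to express $\H^0 \DB^p_Z$ as a pushforward along the blowup $\pi : \tilde Z \to Z$ of the vertex (whose exceptional divisor is isomorphic to $X$), and to decompose the resulting sheaf according to the $\GG_m$-action on the cone, so that the obstructions to reflexivity are precisely the groups $H^p(X, \O_X)$. Once these three translations are in place, the ``if and only if'' of the proposition follows by direct comparison with Definition \ref{def: new k-Du Bois}; the main technical obstacle is the reflexivity analysis underlying Corollary \ref{cor:reflexive-push-forward}, while the remaining steps are a formal assembly.
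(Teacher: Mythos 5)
Your proof is correct and follows the same route the paper intends: the paper gives no separate argument for this proposition, merely asserting that it follows from Proposition \ref{corollary: pre-k-Du Bois for cone} and Corollary \ref{cor:reflexive-push-forward}, which is exactly the clause-by-clause assembly against Definition \ref{def: new k-Du Bois} that you carry out (including the seminormality and codimension checks the paper leaves implicit). The only point worth making explicit is that Corollary \ref{cor:reflexive-push-forward}(2) characterizes reflexivity of $\H^0\DB_Z^p$ in terms of $H^0(X,\Omega_X^p)$ rather than $H^p(X,\O_X)$, so passing to condition (2) of the proposition implicitly uses Hodge symmetry on the smooth projective variety $X$.
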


Similarly, from Proposition \ref{corollary: pre-k-rational for cone}, we obtain
\begin{proposition}[$k$-rational criterion for cones]\label{prop: new-k-raitonal for cone}
Let $X$ be a smooth projective variety and $L$ an ample line bundle on $X$ with conormal bundle $L$. Let $Z=C(X,L)$ be the cone over $X$. Then $Z$ is $k$-rational if and only if  
\begin{enumerate}
\item $k<\frac{\dim X}{2}$;
\item $H^0(X, \O_X) \xto{\cup c_1(L)} H^1(X,\Omega^1_X)\xto{\cup c_1(L)} H^2(X,\Omega_X^2)\to\cdots \to H^k(X,\Omega^k_X)$ are isomorphisms.
\item $H^i(X,\Omega_X^p\otimes L^m)=0$ for all $i\ge 1$, $m\ge 0$ and $0\le p\le k$, except possibly when $m=0$ and $i=p$.
\end{enumerate}
\end{proposition}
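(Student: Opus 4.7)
The plan is to deduce this statement directly from Definition \ref{def: new k-rational} combined with the previously established pre-$k$-rational criterion for cones (Proposition \ref{corollary: pre-k-rational for cone}). The first step is to pin down the singular locus of $Z = C(X,L)$. Since $X$ is smooth and $L$ is ample, the complement $Z \setminus \{v\}$ of the vertex $v$ is naturally identified with the total space of $L^{-1}$ with its zero section removed, which is a $\GG_m$-torsor over the smooth variety $X$, hence smooth. Therefore $Z_{\sing} = \{v\}$ and $\codim_Z(Z_{\sing}) = \dim Z = n+1$, where $n = \dim X$. In addition, $Z$ is normal because the section ring $\bigoplus_{m\ge 0} H^0(X, L^m)$ of an ample line bundle on a smooth projective variety is a normal graded ring.

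Next, I would unfold Definition \ref{def: new k-rational} for $Z$: it requires (a) normality of $Z$, (b) $\codim_Z(Z_{\sing}) > 2k+1$, and (c) the vanishing $R^i f_* \Omega^p_{\tilde Z}(\log E) = 0$ for $i>0$, $p \le k$. Condition (a) holds automatically by the previous paragraph. Condition (b) reduces to $n+1 > 2k+1$, i.e. $k < n/2$, which is exactly condition (1) of the proposition. For condition (c), note that $k < n/2 < n+1 = \codim_Z(Z_{\sing})$, so Remark \ref{remark:new-notions}(1) applies and shows (c) is equivalent to $Z$ having pre-$k$-rational singularities in the sense of Definition \ref{definition:pre-k-DB-rational}.

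Finally, I would invoke the pre-$k$-rational criterion for cones (Proposition \ref{corollary: pre-k-rational for cone}): the pre-$k$-rational property of $Z = C(X,L)$ is equivalent to the vanishing $H^i(X, \Omega^p_X \otimes L^m) = 0$ for all $i \geq 1$, $m \geq 0$, $0 \leq p \leq k$ except possibly when $m = 0$ and $i = p$, together with the statement that $\cup c_1(L)$ induces isomorphisms $H^0(X,\O_X) \to H^1(X,\Omega^1_X) \to \cdots \to H^k(X,\Omega^k_X)$. These are precisely conditions (2) and (3) of the statement. Since every step above is an equivalence, both directions of the ``if and only if'' are established simultaneously. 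The only nontrivial ingredient is the identification of $Z_{\sing}$ with the vertex, which pins down the codimension and aligns the codimension bound in the definition with condition (1); the rest of the argument is a direct translation through the already-proved pre-$k$-rational criterion.
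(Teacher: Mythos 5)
Your argument is correct and follows exactly the route the paper intends: the paper derives this proposition directly from the pre-$k$-rational criterion for cones (Proposition \ref{corollary: pre-k-rational for cone}), with the codimension condition in Definition \ref{def: new k-rational} translating to $k<\dim X/2$ via $\codim_Z(Z_{\sing})=\dim X+1$ and condition (3) of that definition translating to pre-$k$-rationality via Remark \ref{remark:new-notions}(1). Your filling-in of the normality of the section ring and the identification of the singular locus with the vertex matches the implicit computations the paper relies on (e.g.\ in the proof of Proposition \ref{corollary: pre-k-rational for cone}), so nothing is missing.
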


From Examples \ref{ex:cone-over-P^r-pre-DB}, \ref{ex: cone-over-Pr} ,\ref{ex:cone-over-K3-not-rational}, \ref{ex:cone-over-K3-pre-DB}, we immediately obtain the following concrete examples of varieties with $k$-rational and $k$-Du Bois singularities.
\begin{example}\label{ex:cone-over-Veronese-and-K3}
\begin{enumerate}
\item The cone $Z=C(\PP^r,\O(d))$ has $k$-Du Bois singularities if and only if $k\le\frac{r}{2}$. It is $k$-rational if and only if $k<\frac{r}{2}$.
\item 
The cone $Z=C(X,\O(l))$ over a quartic surface $X\subset \PP^3$ is not $2$-Du Bois, and is $1$-Du Bois if and only if $l\ge 5$. However, $Z$ does not have rational singularities.
\end{enumerate}
\end{example}

\begin{remark}\label{remark:-k-Du-Bois-but-not-k-1-rational}
When $X$ is lci, it is shown in \cite{FL-isolated} and \cite{CDM-k-rational} that $k$-Du Bois implies $(k-1)$-rational. Unfortunately, this fails to hold in general, as shown by the the examples of non-simplicial toric varieties and cones $C(X,\O(l))$ over quartic surface, where $l\ge 5$.
\end{remark}

\section{Toric Varieties}\label{section:toric-varieties}

In this section, we prove Proposition \ref{criteria:toric-varieties}.

Let $N$ be a free abelian group of rank $n$. Given a strongly convex, rational polyhedral cone $\sigma \subset N \otimes \mathbb{R}$, we have the associated $n$ dimensional affine toric variety $X :=X_\sigma$. For general notions regarding toric varieties, we refer to \cite{fulton:toric-varieties} and \cite{cox-little-schenk:toric-varieties}.

\subsection{Pre-$k$-Du Bois singularities for toric varieties}
By \cite[Theorem 4.6]{GNPP}, we have that the Du Bois complex of $X$ is quasi isomorphic to
\[0 \to \Omega^{[1]}_X \to \Omega^{[2]}_X\to \dots \to \Omega^{[n]}_X \to 0 \]
which is the complex of reflexive differentials of $X$ along with the trivial filtration. Therefore we have
\[ \DB^p_X \simeq \Omega^{[p]}_X. \]
Thus $X$ has pre-$k$-Du Bois singularities for all $0 \leq k \leq n$.

\subsection{Pre-$k$-rational singularities for toric varieties}

If $\sigma$ is simplicial, a result of Danilov (see \cite[Proposition 3.10]{Oda}) says that the natural map:
\[ \Omega^{[p]}_X \to \RHom_{\O_X}(\Omega^{[n-p]}_X,\Omega^{[n]}_X) \]
is an isomorphism. Since $X$ has rational singularities, it is Cohen-Macaulay, therefore the dualizing sheaf is just $\Omega^{[n]}_X$. Thus we get
\[ \Omega^{[p]}_X \to \RHom_{\O_X}(\Omega^{[n-p]}_X,\Omega^{[n]}_X) \simeq \DD_X(\DB^{n-p}_X). \]
Therefore, we conclude that $X$ has pre-$k$-rational singularities for all $0 \leq k \leq n$.

Now we consider the case when $\sigma$ is non-simplicial. We have $\codim_X(X_{\sing}) \geq 2$ by normality of $X$. Therefore, for a strong log resolution of $f:\tilde X \to X$ with reduced exceptional divisor $E$, we have that:
\[ \DD_X(\DB^{n-1}_X) \simeq \bR f_*\Omega^1_{\tilde X}(\log E)  \]
by Lemma \ref{lemma:h0-dual-and-log-differentials}. But \cite[Theorem 1.1(2)]{toric-paper} says that $R^1f_*\Omega^1_{\tilde X}(\log E) \neq 0$ because $\sigma$ is non-simplicial. Therefore, $X$ is not pre-$1$-rational.

\begin{remark}\label{remark:two-defn-k-rational-distinct}
    By \cite[Theorem 1.1(1)]{toric-paper}, when $\sigma$ is simplicial we have:
    \[R^{c-1}f_*\Omega^c_{\tilde X}(\log E) \neq 0 \]
    where $c = \codim_X(X_{\sing})$. This shows that $\bR f_*\Omega^p_{\tilde X}(\log E)$ is distinct from $\DD_X(\DB^{n-p}_X)$ for simplicial toric varieties.
\end{remark}

\section{Cones over smooth projective varieties} \label{section:cones}
In this section we describe the Du Bois complex for cones over a smooth projective variety $X$ and give conditions for cones to have (pre)-$k$-Du Bois and (pre)-$k$-rational singularities. 

\subsection{Convention for cones}
Let $X\subset \PP^N$ be a projective variety and $L$ an ample line bundle on $X$. Following \cite[Section 3.1]{MMP-singularities}, we define the \textit{affine cone over $X$ with conormal bundle $L$} as the affine variety
\[C(X,L) = \Spec\bigoplus_{m\ge 0} H^0(X,L^m).\]
If $X$ is normal, then the cone $C(X,\O_X(1))$ is the normalization of the \textit{classical affine cone} 
\[C(X):=Z(f_1=\cdots =f_s=0)\subset \AA^{N+1},\] 
where $f_1,\cdots, f_s$ are defining equations for the projective variety $X\subset \PP^{N}$.

There are advantages for working with the normalized cones $C(X,L)$ rather than the classic ones. First, by varying the ample line bundle $L$, we get more flexibility and could potentially construct more interesting examples. Moreover, non-normal cones cannot be Du Bois by the following Lemma, so they are not relevant to the study of $k$-Du Bois and $k$-rational singularities.

\begin{lemma}
Let $X$ be a smooth variety and $C(X)$ the classical cone over $X$. If $C(X)$ is Du Bois, then it is normal. 
\begin{proof}
The normalization map $C(X,\O_X(1))\to C(X)$ is a seminormalization, since it induces a bijection on the underlying set of $C(X,\O_X(1))$ and $C(X)$. Since $C(X)$ is Du Bois, by \cite[Proposition 5.2]{Saito-MHC}, it is seminormal. Thus $C(X)=C(X,\O_X(1))$ is normal.
\end{proof}
\end{lemma}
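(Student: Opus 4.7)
The plan is to reduce the claim to Saito's theorem that Du Bois singularities are seminormal (\cite[Proposition 5.2]{Saito-MHC}), together with the characterization of the seminormalization as the universal scheme equipped with a finite universal homeomorphism from the normalization.

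First I would verify that the normalization map $\pi\colon C(X,\O_X(1))\to C(X)$ is a bijection on underlying sets. Away from the vertices, $\pi$ is an isomorphism: the punctured cone $C(X)\setminus\{0\}$ admits a natural $\CC^*$-bundle structure over the smooth variety $X$, so it is already smooth and hence normal. Both cones have a unique closed point lying over the vertex of $C(X)$, and $\pi$ sends vertex to vertex. Since we are working over $\CC$, the residue field at every closed point is $\CC$, so $\pi$ is automatically a finite, birational, universal homeomorphism whose source $C(X,\O_X(1))$ is normal (in particular seminormal).

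Any such map realizes its source as the seminormalization of its target. If $C(X)$ is Du Bois, Saito's result gives that $C(X)$ is itself seminormal, so the seminormalization map $\pi$ must be an isomorphism. Consequently $C(X)\cong C(X,\O_X(1))$ is normal.

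I do not anticipate any serious obstacle here; the content is entirely packaged inside Saito's theorem and the universal property of seminormalization, while the required bijection is immediate from the explicit cone description and the smoothness of $X$.
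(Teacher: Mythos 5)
Your proof is correct and follows essentially the same route as the paper: identify the normalization $C(X,\O_X(1))\to C(X)$ as the seminormalization via bijectivity on points (you supply slightly more detail, noting smoothness of the punctured cone and triviality of residue field extensions over $\CC$), then invoke Saito's theorem that Du Bois implies seminormal. No gaps beyond those already implicit in the paper's own argument.
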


\subsection{Du Bois complexes for cones}
We describe the Du Bois complexes of cones $Z=C(X,L)$ over a smooth projective variety $X$ in terms of cohomology of the K\"ahler differentials on $X$. 

\begin{proposition}\label{prop:DB-complex}
Let $X$ be a smooth projective variety and $L$ an ample line bundle on $X$. Then
\[\H^0\DB_Z^0=\O_Z \quad \text{ and }\]
\[\Gamma(\H^i\DB_Z^0) = \bigoplus_{m\ge 1} H^i(X,L^m) \quad \text{ for all } i>0.\]
For $p\ge 1$ and all $i\ge 0$,
\[\Gamma(\H^i\DB_Z^p)=\bigoplus_{m\ge 1} H^i(\Omega_X^p\otimes L^m)\oplus \bigoplus_{m\ge 1} H^i(\Omega_X^{p-1}\otimes L^m).\]
Note that since $Z$ is affine, the global sections $\Gamma(\H^i\DB_Z^p)$ determine $\H^i\DB_Z^p$ completely.
\end{proposition}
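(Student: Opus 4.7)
The plan is to use an explicit strong log resolution of $Z$ and read off the cohomology sheaves of $\DB^p_Z$ via the natural $\GG_m$-action. Let $\tilde Z:=\Spec_X\bigl(\bigoplus_{m\ge 0}L^m\bigr)$ be the total space of $L^{-1}$, let $p:\tilde Z\to X$ be the affine projection, and let $E\subset\tilde Z$ be the zero section. Then the contraction $\pi:\tilde Z\to Z$ is a strong log resolution with reduced exceptional divisor $E\cong X$, and $p_*\O_{\tilde Z}=\bigoplus_{m\ge 0}L^m$ is graded by the $\GG_m$-action scaling fibers. Applying the exact triangle of Proposition~\ref{proposition:facts-about-DB-complex}(2) yields
\[
\bR\pi_*\Omega^p_{\tilde Z}(\log E)(-E)\longrightarrow \DB^p_Z\longrightarrow \DB^p_{\{0\}}\xrightarrow{+1}\!.
\]
For $p\ge 1$ one has $\DB^p_{\{0\}}=0$, so $\DB^p_Z\simeq \bR\pi_*\Omega^p_{\tilde Z}(\log E)(-E)$; for $p=0$, $\DB^0_{\{0\}}\simeq \CC_{\{0\}}$ and the normality of $Z$ (hence seminormality) forces $\H^0\DB^0_Z=\O_Z$ and $\H^i\DB^0_Z\simeq R^i\pi_*\O_{\tilde Z}(-E)$ for $i>0$.

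Since $Z$ and $p$ are both affine, $\bR\pi_*=R\Gamma\bigl(X,p_*(-)\bigr)$, so everything reduces to computing $p_*\Omega^p_{\tilde Z}(\log E)(-E)$ as a coherent sheaf on $X$. For $p=0$, $\O_{\tilde Z}(-E)\simeq p^*L$ and the projection formula give $p_*\O_{\tilde Z}(-E)=\bigoplus_{m\ge 1}L^m$, yielding the $p=0$ formulas immediately.

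For $p\ge 1$, I would compute the $\GG_m$-weight decomposition of $p_*\Omega^p_{\tilde Z}(\log E)(-E)$. In a local trivialization of $L$ with fiber coordinate $t$ of weight one, a direct computation identifies the weight-$m$ sections as those spanned by $t^m\,dx_I$ ($|I|=p$) and $t^{m-1}\,dx_J\wedge dt$ ($|J|=p-1$); tracking the transition rules under $t\mapsto ft$ displays the weight-$m$ piece $\B^p_m$ as an extension on $X$,
\[
0\to\Omega^p_X\otimes L^m\to \B^p_m\to\Omega^{p-1}_X\otimes L^m\to 0,
\]
so that $p_*\Omega^p_{\tilde Z}(\log E)(-E)=\bigoplus_{m\ge 1}\B^p_m$. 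Applying $H^i(X,-)$ to each weight summand and assembling over $m$ then matches, piece by piece, the two direct sums in the statement.

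The main delicate point is that the extension above does not split as a sheaf on $X$ in general: its class is a multiple of $c_1(L)\in H^1(X,\Omega^1_X)$ acting by wedge product. The direct-sum form of the proposition is therefore the assertion that the associated connecting homomorphisms in the long exact cohomology sequence vanish. I would handle this by a $\GG_m$-equivariance argument using the Euler vector field $\theta=t\,\partial_t$ on $\tilde Z$, whose contraction operator $\iota_\theta$ provides a canonical retraction of the quotient map that decouples the sequence after $R\Gamma(X,-)$. With this vanishing in hand, the long exact sequence collapses into short ones and the desired decomposition of $\Gamma(\H^i\DB^p_Z)$ follows.
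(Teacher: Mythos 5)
Your route is viable and genuinely different from the paper's. The paper works with the descent triangle of the blow-up square, $\DB^p_Z \to \bR f_*\Omega^p_{\tilde Z}\oplus\DB^p_{\{v\}}\to\bR f_*\Omega^p_X\xto{+1}$, computes $\pi_*\Omega^p_{\tilde Z}$ (no log poles, no twist) from the exact sequence $0\to\pi^*\Omega^p_X\to\Omega^p_{\tilde Z}\to\pi^*(\Omega^{p-1}_X\otimes L)\to 0$, splits the resulting pushforward sequence (Lemma \ref{Lemma:SES-cones}), and then uses the retraction $\Omega^p_X\to\pi_*\Omega^p_{\tilde Z}\to\Omega^p_X$ to break the long exact sequence. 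You instead invoke the triangle of Proposition \ref{proposition:facts-about-DB-complex}(2) with the vertex as the closed subscheme, which for $p\ge 1$ immediately gives $\DB^p_Z\simeq\bR\pi_*\Omega^p_{\tilde Z}(\log E)(-E)$ and shifts all the work into the weight decomposition $p_*\Omega^p_{\tilde Z}(\log E)(-E)=\bigoplus_{m\ge1}\B^p_m$; your $p=0$ discussion is also fine (for $i=1$ one should note that the connecting map $\CC_v\to R^1\pi_*\O_{\tilde Z}(-E)$ vanishes because $\O_Z=\H^0\DB^0_Z\to\CC_v$ is surjective). Both approaches reduce to the same crux: a graded piece which is a non-split $\O_X$-extension of $\Omega^{p-1}_X\otimes L^m$ by $\Omega^p_X\otimes L^m$ whose cohomology sequence must nonetheless decouple. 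Your version is arguably cleaner for $p\ge 1$, at the cost of treating $p=0$ separately, and you correctly identify that the non-splitting of the extension is the delicate point.

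The one step that fails as written is your splitting mechanism. On the weight-$m$ piece $\B^p_m$, the contraction $\iota_\theta$ with $\theta=t\partial_t$ kills the subsheaf $\Omega^p_X\otimes L^m$ (spanned locally by $t^m\,dx_I$) and sends $t^{m-1}\,dx_J\wedge dt$ to $\pm t^m\,dx_J$; in other words $\iota_\theta$ coincides, up to sign, with the quotient map $q$ itself, so it is neither a section of $q$ nor a retraction of the inclusion, and by itself it cannot "decouple the sequence." What does work is to bring in the de Rham differential: the Lie derivative $\mathrm{Lie}_\theta=d\iota_\theta+\iota_\theta d$ acts on weight $m$ as multiplication by $m$, so the $\CC$-linear operator $\tfrac1m\,\iota_\theta\circ d$ is a retraction of $\Omega^p_X\otimes L^m\hookrightarrow\B^p_m$ (equivalently, $\tfrac1m d$ applied to the tautological lift of $\omega\otimes t^m$ is a section of $q$). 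This is precisely the splitting used in the paper's Lemma \ref{Lemma:SES-cones}, and it is where characteristic zero enters. The resulting splitting is only $\CC$-linear, not $\O_X$-linear, but that suffices to split the long exact sequence of cohomology. With this correction, your argument goes through.
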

 
In the form of cdh-sheaves, our result is subsumed by \cite[Lemma 2.4, Corollary 2.10]{CHWW-cones}, which works over arbitrary fields of characteristic zero. For the convenience of the reader, we include a proof of Proposition \ref{prop:DB-complex} in Appendix \ref{appendix:cones}.

\vspace{\medskipamount}
    
As a corollary of the proof, we obtain a criterion for $\H^0\DB_X^p$ to be reflexive.

\begin{corollary}\label{cor:reflexive-push-forward}
Let $X$ be a smooth projective variety, and $L$ an ample line bundle on $X$. Let $Z=C(X,L)$ be the cone over $X$, and $f: \tilde{Z}\to Z$ be the blowup at the cone point. Then
\begin{enumerate}
    \item For $p\ge 1$, the natural map $\H^0\DB_Z^p\to f_*\Omega^p_{\tilde{Z}}$ is an isomorphism if and only if
\[H^0(X,\Omega_X^p)=0.\]
    \item For $1\le p\le \dim Z-2$, the sheaf $\H^0\DB_X^p$ is reflexive if and only if $H^0(X,\Omega_X^p)=0$.
\end{enumerate}
\end{corollary}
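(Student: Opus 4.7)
The plan is to first construct a short exact sequence
\[
0 \to \H^0\DB^p_Z \to f_*\Omega^p_{\tilde Z} \to (H^0(X, \Omega^p_X))_{\{0\}} \to 0
\]
of sheaves on $Z$, where $(\,\cdot\,)_{\{0\}}$ denotes the skyscraper sheaf at the cone point. Part $(1)$ then follows immediately: the inclusion $\H^0\DB^p_Z \hookrightarrow f_*\Omega^p_{\tilde Z}$ is an isomorphism if and only if the skyscraper quotient vanishes. For part $(2)$, I will show that $f_*\Omega^p_{\tilde Z}$ is already reflexive in the stated range, so that reflexivity of $\H^0\DB^p_Z$ becomes equivalent to the vanishing of the same skyscraper.

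To construct the sequence, apply Proposition \ref{proposition:facts-about-DB-complex}(2) to the blowup $f$: since $\DB^p_{\{0\}} = 0$ for $p \geq 1$, we get $\DB^p_Z \simeq \bR f_*\Omega^p_{\tilde Z}(\log X)(-X)$, hence $\H^0\DB^p_Z = f_*\Omega^p_{\tilde Z}(\log X)(-X)$. Twisting the residue sequence $0 \to \Omega^p_{\tilde Z} \to \Omega^p_{\tilde Z}(\log X) \to \Omega^{p-1}_X \to 0$ by $\O_{\tilde Z}(-X)$ (noting $\O_{\tilde Z}(-X)|_X = L$) and combining with the conormal sequence $0 \to L \otimes \Omega^{p-1}_X \to \Omega^p_{\tilde Z}|_X \to \Omega^p_X \to 0$ via the snake lemma yields the short exact sequence
\[
0 \to \Omega^p_{\tilde Z}(\log X)(-X) \to \Omega^p_{\tilde Z} \to \Omega^p_X \to 0
\]
on $\tilde Z$. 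Pushing forward by $f$ gives a long exact sequence whose third term is $f_*\Omega^p_X = (H^0(X, \Omega^p_X))_{\{0\}}$. The map $f_*\Omega^p_{\tilde Z} \to f_*\Omega^p_X$ is surjective because, using the $\mathbb{G}_m$-action on $\tilde Z$ coming from its structure as the total space of $L^\vee$ over $X$, its weight-$0$ piece is the identity restriction $\pi^*\Omega^p_X \to \Omega^p_X$ along the zero section. This yields the claimed sequence.

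To prove reflexivity of $f_*\Omega^p_{\tilde Z}$ for $1 \leq p \leq \dim Z - 2 = n - 1$ (where $n = \dim X$), compare $\Gamma(Z, f_*\Omega^p_{\tilde Z}) = \Gamma(\tilde Z, \Omega^p_{\tilde Z})$ with $\Gamma(Z, j_*\Omega^p_U) = \Gamma(\tilde Z \setminus X, \Omega^p_{\tilde Z})$, where $j : U = Z \setminus \{0\} \hookrightarrow Z$ is the smooth locus. Pushing forward the relative cotangent sequence $0 \to \pi^*\Omega^p_X \to \Omega^p_{\tilde Z} \to \pi^*\Omega^{p-1}_X \otimes \pi^*L \to 0$ by the affine map $\pi$ (or its restriction to $\tilde Z \setminus X$), the $\mathbb{G}_m$-weight-$m$ piece of global sections fits in the long exact sequence
\[
0 \to H^0(X, \Omega^p_X \otimes L^m) \to (\,\cdot\,)_m \to H^0(X, \Omega^{p-1}_X \otimes L^m) \xrightarrow{\delta_m} H^1(X, \Omega^p_X \otimes L^m).
\]
The weight-$m$ pieces of $\Gamma(\tilde Z, \Omega^p_{\tilde Z})$ and $\Gamma(\tilde Z \setminus X, \Omega^p_{\tilde Z})$ coincide for $m \geq 1$; for $m \leq -1$ both vanish by Kodaira--Akizuki--Nakano vanishing for the anti-ample $L^m$ (this requires $p \leq n-1$); and in weight $0$ the only potential discrepancy is $\ker \delta_0$, where $\delta_0 : H^0(X, \Omega^{p-1}_X) \to H^1(X, \Omega^p_X)$ is identified with cup product with $c_1(L)$. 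By Hard Lefschetz applied to the ample class $c_1(L)$, this map is injective on $H^{p-1}(X,\CC)$ for $p - 1 \leq n-1$, so $f_*\Omega^p_{\tilde Z} = j_*\Omega^p_U$ is reflexive.

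Finally, if $H^0(X, \Omega^p_X) = 0$, then $\H^0\DB^p_Z \cong f_*\Omega^p_{\tilde Z}$ by the short exact sequence, and the latter is reflexive. Conversely, if $H^0(X, \Omega^p_X) \neq 0$, then $\H^0\DB^p_Z$ is a strict subsheaf of $f_*\Omega^p_{\tilde Z}$, and both agree on $U$; hence the reflexive hull of $\H^0\DB^p_Z$ is the strictly larger sheaf $f_*\Omega^p_{\tilde Z}$, so $\H^0\DB^p_Z$ is not reflexive. The main technical point is the identification $\delta_0 = \cup\, c_1(L)$ and the application of Hard Lefschetz, which is precisely what enforces the codimension bound $p \leq \dim Z - 2$.
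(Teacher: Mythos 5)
Your proof is correct. For part (1) it is essentially the paper's argument in different packaging: the paper computes $\Gamma(\H^0\DB_Z^p)$ and $\Gamma(f_*\Omega^p_{\tilde Z})$ explicitly as direct sums of $\GG_m$-weight spaces (Proposition \ref{prop:DB-complex} and Lemma \ref{Lemma:SES-cones}) and observes that the natural map is the inclusion of direct summands, whereas you extract the same discrepancy $H^0(X,\Omega^p_X)$ in one step from the Steenbrink identification $\DB^p_Z\simeq \bR f_*(\Omega^p_{\tilde Z}(\log X)(-X))$ together with the short exact sequence $0\to\Omega^p_{\tilde Z}(\log X)(-X)\to\Omega^p_{\tilde Z}\to\Omega^p_X\to 0$; the derived pushforward of that sequence is exactly the triangle the appendix works with, and your weight-$0$ argument for the surjectivity of $f_*\Omega^p_{\tilde Z}\to f_*\Omega^p_X$ is the same splitting used there. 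The genuine difference is in part (2): the paper simply cites \cite[Theorem 1.3]{SSt85} for $f_*\Omega^p_{\tilde Z}\cong\Omega^{[p]}_Z$ when $p\le\dim Z-2$, while you prove this input from scratch by comparing the weight spaces of $\Gamma(\tilde Z,\Omega^p_{\tilde Z})$ and $\Gamma(\tilde Z\setminus X,\Omega^p_{\tilde Z})$, killing the negative weights with Kodaira--Akizuki--Nakano (which is precisely where $p\le \dim X-1=\dim Z-2$ enters) and the weight-$0$ discrepancy by identifying the connecting map with $\cup\,c_1(L)$ (the Atiyah class of $L$) and invoking Hard Lefschetz. This buys a self-contained argument that makes transparent why the codimension bound is needed, at the cost of reproving the relevant case of the Steenbrink--van Straten result. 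The final step is also sound: $\H^0\DB^p_Z$ is torsion-free, agrees with $\Omega^p_U$ on $U=Z\setminus\{0\}$, and hence has double dual $j_*\Omega^p_U=f_*\Omega^p_{\tilde Z}$, so it is reflexive exactly when the skyscraper quotient $H^0(X,\Omega^p_X)$ vanishes.
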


\begin{proof}
By Proposition \ref{prop:DB-complex}, for $p\ge 1$,
\[\Gamma(\H^0\DB_Z^p) \cong \bigoplus_{m\ge 1} H^0(\Omega_X^p\otimes L^m)\oplus \bigoplus_{m\ge 1}H^0(\Omega_X^{p-1}\otimes L^m).\]
By Lemma \ref{Lemma:SES-cones}, for $p\ge 1$,
\[\Gamma(f_*\Omega^p_{\tilde{Z}})\cong \bigoplus_{m\ge 0} H^0(X,\Omega_X^p\otimes L^m)\bigoplus_{m\ge 1} H^0(X,\Omega_X^{p-1}\otimes L^m).\]
Moreover, a close inspection of the proof of Proposition \ref{prop:DB-complex} shows that the natural map $\H^0\DB_Z^p\to f_*\Omega^p_{\tilde{Z}}$ induces the inclusion map of the right-hand sides as direct summands. A comparison between these two formulas gives (1).

(2) is a consequence of (1) and \cite[Theorem 1.3]{SSt85}, which implies that $f_*\Omega_{\tilde{Z}}^p \cong \Omega_Z^{[p]}$ for $p \le \dim Z-2$.
\end{proof}

\subsection{Criteria for cones to have $k$-Du Bois and $k$-rational singularities}
As an immediate consequence of Proposition \ref{prop:DB-complex}, we obtain 

\begin{corollary}[Pre-$k$-Du Bois criterion for cones]
\label{corollary: pre-k-Du Bois for cone}
Let $X$ be a smooth projective variety, and $L$ an ample line bundle on $X$. Let $Z=C(X,L)$ be the affine cone over $X$ with conormal bundle $L$. Then $Z$ has pre-$k$-Du Bois singularities if and only if 
\[H^i(X,\Omega_X^{p}\otimes L^m)=0\]
for all $i\ge 1, m\ge 1$ and $p\le k$.
\end{corollary}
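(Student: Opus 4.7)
The plan is to unpack the definition of pre-$k$-Du Bois singularities and translate it term-by-term using Proposition \ref{prop:DB-complex}. By definition, $Z$ has pre-$k$-Du Bois singularities precisely when $\H^i \DB_Z^p = 0$ as coherent sheaves on $Z$ for all $i > 0$ and all $0 \le p \le k$. Since $Z = C(X,L) = \Spec \bigoplus_{m \ge 0} H^0(X, L^m)$ is affine and $\H^i\DB_Z^p$ is coherent, each such sheaf vanishes if and only if its module of global sections vanishes.

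Next, I would apply Proposition \ref{prop:DB-complex} case by case. For $p = 0$ and $i > 0$, the proposition gives $\Gamma(\H^i\DB_Z^0) = \bigoplus_{m \ge 1} H^i(X, L^m)$, so vanishing of this sheaf is equivalent to $H^i(X, L^m) = 0$ for all $m \ge 1$, which is exactly the stated condition at $p = 0$. For $1 \le p \le k$ and $i > 0$, the proposition gives
\[\Gamma(\H^i\DB_Z^p) = \bigoplus_{m \ge 1} H^i(X, \Omega_X^p \otimes L^m) \oplus \bigoplus_{m \ge 1} H^i(X, \Omega_X^{p-1} \otimes L^m),\]
whose vanishing is equivalent to the simultaneous vanishing of both $H^i(X, \Omega_X^p \otimes L^m)$ and $H^i(X, \Omega_X^{p-1} \otimes L^m)$ for all $m \ge 1$. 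Aggregating these conditions as $p$ ranges over $1, \ldots, k$ yields exactly the stated vanishing $H^i(X, \Omega_X^q \otimes L^m) = 0$ for all $i \ge 1$, $m \ge 1$, and $0 \le q \le k$.

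There is no substantive obstacle: all of the cohomological content has been packaged into Proposition \ref{prop:DB-complex}, and this corollary is just a direct reinterpretation of that computation. The only minor bookkeeping point is to observe that the $\Omega_X^{p-1}$ summand appearing at index $p$ is already absorbed into the $\Omega_X^q$ summand at index $q = p-1$ (together with the $p = 0$ case), so the two families of vanishings collapse cleanly into the single condition over $0 \le q \le k$.
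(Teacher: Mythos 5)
Your proof is correct and is exactly the argument the paper intends: the corollary is stated as an immediate consequence of Proposition \ref{prop:DB-complex}, and your term-by-term translation (using affineness of $Z$ to reduce sheaf vanishing to vanishing of global sections, then aggregating the $\Omega_X^p$ and $\Omega_X^{p-1}$ summands over $p\le k$) is the intended reading. Nothing is missing.
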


We learned from B. Tighe (private communication) that similar criterion for the cone $C(X,L)$ to have strict $k$-Du Bois singularities will appear in his Ph.D. thesis.

\begin{remark}
Since $Z$ is normal,
it is pre-$0$-Du Bois if and only if it is Du Bois. Both conditions are equivalent to
\[H^i(X,L^m)=0, \text{ for all }i>0, m\ge 1\]
by the above corollary. This recovers \cite[Proposition 4.13]{DB} (see also \cite[Theorem 2.5]{graf-kovacs}).
\end{remark}

A smooth projective variety $X\subset \PP^N$ is said to \textit{satisfy Bott vanishing} if 
\[H^i(\Omega_X^k\otimes L)=0\]
for all $i>0$, $k\ge 0$ and any ample line bundle $L$ on $X$. From Corollary \ref{corollary: pre-k-Du Bois for cone}, we immediately obtain
\begin{corollary}\label{cor:Bott-van}
If a smooth projective variety $X$ satisfies Bott vanishing, and $L$ is any ample line bundle on $X$, then the cone $C(X,L)$ is pre-$k$-Du Bois for all $k$.
\end{corollary}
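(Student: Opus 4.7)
The plan is to deduce this directly from the pre-$k$-Du Bois criterion for cones (Corollary \ref{corollary: pre-k-Du Bois for cone}), which reduces the statement to a vanishing of cohomology groups on $X$ that follows immediately from the Bott vanishing hypothesis.

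More precisely, by Corollary \ref{corollary: pre-k-Du Bois for cone}, to prove that $Z = C(X,L)$ has pre-$k$-Du Bois singularities for every $k$, it suffices to show that
\[ H^i(X, \Omega_X^p \otimes L^m) = 0 \quad \text{for all } i \geq 1,\ m \geq 1,\ p \geq 0. \]
First I would observe that since $L$ is ample and $m \geq 1$, the line bundle $L^m$ is itself ample. Then applying the Bott vanishing hypothesis to $X$ with the ample line bundle $L^m$ (and with $k = p$ in the statement recalled just before the corollary) yields exactly the displayed vanishing. Feeding this back into Corollary \ref{corollary: pre-k-Du Bois for cone} shows that $Z$ has pre-$k$-Du Bois singularities for every $k \geq 0$.

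There is essentially no obstacle here: the only small thing to note is that the Bott vanishing hypothesis, as stated, is required to hold for \emph{every} ample line bundle on $X$, so that we are free to plug in each positive tensor power $L^m$. The proof is therefore just a direct application of the criterion, with the quantification over $m$ in the criterion matched by the quantification over ample line bundles in the definition of Bott vanishing.
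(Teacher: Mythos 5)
Your proof is correct and is exactly the paper's argument: the paper also deduces the corollary immediately from Corollary \ref{corollary: pre-k-Du Bois for cone}, using that $L^m$ is ample for $m\ge 1$ so Bott vanishing gives $H^i(X,\Omega_X^p\otimes L^m)=0$ for all $i\ge 1$. Nothing further is needed.
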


Bott vanishing has been studied extensively in the literature. For example\footnote{Some of these examples satisfy Bott vanishing over more general base fields. For simplicity, we state the results over the complex numbers.}, projective spaces \cite{Bott-proj-space}, toric varieties \cite{Danilov-toric,Batyrev-Cox-toric,char-p-toric}, abelian varieties, del Pezzo surfaces of degree $\ge 5$ \cite{Totaro-Bott-vanishing}, K3 surfaces of Picard number 1 and degree $20$ and $\ge 24$ \cite{Bott-van-K3-1,Bott-van-K3-2,Totaro-Bott-vanishing}, stable GIT quotients of $\PP^n$ by the action of $\PGL_2$ \cite{Torres-Bott-van-GIT}, $37$ Fano $3$-folds \cite{Totaro-Bott-van-Fano}, projective varieties with int-amplified endomorphism \cite{Kawakami-Totaro-Bott-van}, all satisfy Bott vanishing. It follows that the classical cones over any projectively normal embedding of these varieties are pre-$k$-Du Bois for all $k$.

Next, we state the criteria for the cone $Z=C(X,L)$ to be $k$-rational.

\begin{corollary}[Pre-$k$-rational criterion for cones] \label{corollary: pre-k-rational for cone}
Let $X$ be a smooth projective variety of dimension $n$ and $L$ an ample line bundle on $X$. Let $Z=C(X,L)$ be the cone over $X$ with conormal bundle $L$. Then
\begin{enumerate}
\item \label{condition: pre-k-rat-cones} For $k\le n$, the cone $Z$ has pre-$k$-rational singularities if and only if
\begin{itemize}
\item $H^i(X,\Omega_X^p\otimes L^m)=0$ for all $i\ge 1$, $m\ge 0$ and $0\le p\le k$, except possibly when $m=0$ and $i=p$.
\item $H^0(X, \O_X) \xto{\cup c_1(L)} H^1(X,\Omega^1_X)\xto{\cup c_1(L)} H^2(X,\Omega_X^2)\to\cdots \to H^k(X,\Omega^k_X)$ are isomorphisms.
\end{itemize}

\item If $Z$ has pre-n-rational singularities, it also has pre-$(n+1)$-rational singularities. 

\end{enumerate}
\end{corollary}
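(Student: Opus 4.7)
My plan is to exploit that $Z = C(X, L)$ has an isolated singularity at the cone point, so $\codim_Z(Z_{\sing}) = n+1$. By Lemma \ref{lemma:h0-dual-and-log-differentials}(1), for $p \le n$ we have $\DD_Z(\DB_Z^{n+1-p}) \simeq \bR f_* \Omega^p_{\tilde Z}(\log E)$, where $f: \tilde Z \to Z$ is the blow-up at the cone point and $E \cong X$ is the exceptional divisor. Hence, for $k \le n$, pre-$k$-rationality of $Z$ is equivalent to $R^i f_* \Omega^p_{\tilde Z}(\log E) = 0$ for all $i \ge 1$ and $0 \le p \le k$. Since $Z$ is affine and $\pi: \tilde Z \to X$ (the line bundle projection) is affine, this is equivalent to $H^i(X, \pi_* \Omega^p_{\tilde Z}(\log E)) = 0$ for $i \ge 1$, and the $\CC^*$-action on $\tilde Z$ decomposes $\pi_* \Omega^p_{\tilde Z}(\log E)$ into graded pieces indexed by $m \ge 0$.

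The key structural input is the log relative cotangent short exact sequence
\[ 0 \to \pi^* \Omega^p_X \to \Omega^p_{\tilde Z}(\log E) \to \pi^* \Omega^{p-1}_X \to 0, \]
whose $\pi_*$-pushforward, restricted to graded piece $m$, yields
\[ 0 \to \Omega^p_X \otimes L^m \to M_{p,m} \to \Omega^{p-1}_X \otimes L^m \to 0. \]
A direct \v{C}ech cocycle computation, using that in local trivializations one has $dt_\beta/t_\beta - dt_\alpha/t_\alpha = d\log g_{\alpha\beta}$ for transition functions $g_{\alpha\beta}$ of $L$, identifies the extension class of this SES with $c_1(L) \in H^1(X, \Omega^1_X)$, independent of $m$. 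Consequently, the connecting homomorphism $\delta_m: H^i(X, \Omega^{p-1}_X \otimes L^m) \to H^{i+1}(X, \Omega^p_X \otimes L^m)$ is cup product with $c_1(L)$ for every $m \ge 0$.

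Analysing the long exact sequences in cohomology yields both implications. For sufficiency, conditions (a) and (b) make the relevant outer terms vanish for $m \ge 1$, while for $m = 0$ the $c_1(L)$-isomorphisms in (b) precisely handle the diagonal contributions. For necessity, one proceeds by induction on $p$: the vanishing $H^i(M_{p,m}) = 0$ forces the $c_1(L)$-maps in the LES to be isomorphisms for $i \ge 2$ and surjective for $i = 1$, which combined with the inductive hypothesis and ampleness of $L$ (so that $c_1(L) \neq 0$ gives injectivity at the base step $H^0(\O_X) \to H^1(\Omega^1_X)$) recovers (a) and (b). The main technical hurdle is the cocycle identification of $\delta_m$ with cup product by $c_1(L)$, together with the careful bookkeeping to distinguish $m = 0$ from $m \ge 1$, and isomorphisms from mere surjections, in the LES.

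For part (2), the only new condition beyond pre-$n$-rationality is the $p = n+1$ case, namely $\H^i(\DD_Z(\DB^0_Z)) = 0$ for $i > 0$. Since pre-$0$-rationality and the normality of $Z$ imply that $Z$ has rational singularities, $Z$ is both Cohen--Macaulay and Du Bois. Thus $\DB^0_Z \simeq \O_Z$, and $\DD_Z(\O_Z) \simeq \omega_Z^\bullet[-n-1] \simeq \omega_Z$ is a sheaf concentrated in degree $0$, so $\H^i$ vanishes for all $i > 0$, completing the proof.
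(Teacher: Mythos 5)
Your reduction of pre-$k$-rationality (for $k\le n$) to the vanishing of $H^i(X,M_{p,m})$ for $i\ge 1$, where $M_{p,m}$ is the weight-$m$ piece of $\pi_*\Omega^p_{\tilde Z}(\log E)$ sitting in $0\to\Omega^p_X\otimes L^m\to M_{p,m}\to\Omega^{p-1}_X\otimes L^m\to 0$, is sound, and your \v{C}ech computation of the $\O_X$-extension class as $c_1(L)$ is correct. Part (2) is also correct, and is a genuinely simpler route than the paper's (which dualizes the Steenbrink triangle and invokes Grauert--Riemenschneider): normal plus pre-$0$-rational gives rational, hence Cohen--Macaulay and Du Bois, hence $\DD_Z(\DB^0_Z)\simeq\omega_Z$ in degree $0$.

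The gap is in the necessity direction of part (1). From $H^i(M_{p,m})=0$ for $i\ge1$, your long exact sequence only yields that $\delta_0\colon H^0(\Omega^{p-1}_X\otimes L^m)\to H^1(\Omega^p_X\otimes L^m)$ is surjective and $\delta_j$ is bijective for $j\ge1$. For $m\ge1$ the inductive hypothesis kills the source of $\delta_j$ for $j\ge1$, giving $H^i(\Omega^p_X\otimes L^m)=0$ for $i\ge2$; but for $i=1$ you obtain only that $H^1(\Omega^p_X\otimes L^m)$ is a quotient of $H^0(\Omega^{p-1}_X\otimes L^m)$ under $\cup\, c_1(L)$, and that $H^0$ is not controlled by induction --- so the stated vanishing $H^1(X,\Omega^p_X\otimes L^m)=0$ for $m\ge1$ is not recovered. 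The missing ingredient is that for $m\ge1$ the sequence defining $M_{p,m}$ splits as a sequence of sheaves of $\CC$-vector spaces: sending a local section $s$ of $\Omega^{p-1}_X\otimes L^m$ to $\tfrac1m\,d\tilde s$, where $\tilde s$ is the corresponding weight-$m$ $(p-1)$-form on $\tilde Z$, is a $\CC$-linear (not $\O_X$-linear) section of the quotient map; this is exactly Lemma \ref{Lemma:SES-cones} of the paper, since $M_{p,m}$ coincides with the weight-$m$ piece of $\pi_*\Omega^p_{\tilde Z}$ once $m\ge1$. Consequently all connecting maps vanish for $m\ge1$, $H^i(M_{p,m})\cong H^i(\Omega^p_X\otimes L^m)\oplus H^i(\Omega^{p-1}_X\otimes L^m)$, and the vanishing of the left side for $i\ge1$ forces both summands to vanish. (In particular $\cup\, c_1(L)$ acts as zero on $H^j(X,\Omega^{p-1}_X\otimes L^m)$ for every $m\ge1$, consistent with but not implied by your extension-class computation; the cup product is genuinely nontrivial only in weight $m=0$, which is where condition (b) lives.) With this observation inserted your argument closes; the paper avoids the issue by instead using the residue sequence $0\to\Omega^p_{\tilde Z}\to\Omega^p_{\tilde Z}(\log E)\to\Omega^{p-1}_E\to0$, whose quotient is concentrated in weight $0$, so all $m\ge1$ summands of $H^1(\tilde Z,\Omega^p_{\tilde Z})$ are forced to vanish by surjectivity of a map out of the single group $H^0(X,\Omega^{p-1}_X)$.
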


\begin{proof}
We prove (\ref{condition: pre-k-rat-cones}) by induction on $k$. For the base case $k=0$, by Lemma \ref{Lemma:SES-cones}, the cone $Z$ has pre-$0$-rational singularities if and only if
\[\Gamma(R^i f_*\O_{\tilde{Z}}) \cong \bigoplus_{m\ge 0} H^i(X,L^m)=0\]
if and only if 
\[H^i(X,L^m)=0\]
for all $i\ge 1$ and $m\ge 0$. Suppose we have the characterization of pre-$p$-rational singularities for $0\le p \le k-1$. Since $k < n+1 = \codim_Z (Z_{\sing})$, by Lemma \ref{lemma:h0-dual-and-log-differentials}, the cone $Z$ has pre-$k$-rational singularities if and only if 
\[\bR^i f_*\Omega_{\tilde Z}^p(\log X) = 0\]
for all $i>0$ and $0\le p\le k$.
This is equivalent to
\[\varphi_i: H^i(X,\Omega_X^{k-1})\to H^{i+1}(\tilde{Z},\Omega_{\tilde{Z}}^{k})\]
being isomorphisms for all $i>0$ and surjective for $i=0$,  by the long exact sequence associated to the residue sequence
\[0\to \Omega^k_{\tilde{Z}} \to \Omega^k_{\tilde{Z}}(\log X) \to \Omega^{k-1}_X \to 0.\] 
By taking cohomology of the split short exact sequence in Lemma \ref{Lemma:SES-cones}, we obtain
\[H^{i+1}(\tilde{Z},\Omega_{\tilde{Z}}^{k}) \cong  \bigoplus_{m\ge 0} H^{i+1}(X,\Omega_X^k\otimes L^m)\oplus\bigoplus_{m\ge 1} H^{i+1}(X,\Omega_X^{k-1}\otimes L^m)\]
for $i\ge 0$. Moreover, the morphism $\varphi_i$, as the connecting homomorphism of the residue sequence, is identified with the composition 
\[H^{i}(X,\Omega_X^{k-1})\xto{\cup c_1(L)} H^{i+1}(X,\Omega_X^k) \to H^{i+1}(\tilde Z, \Omega^k_{\tilde Z})\]
where $H^{i+1}(X,\Omega_X^k)$ is a direct summand of $H^{i+1}(\tilde{Z},\Omega_{\tilde{Z}}^k)$. 
Since $Z$ has pre-$(k-1)$-rational singularities by the induction hypothesis, we get $H^{i}(X,\Omega_X^{k-1})=0$ unless $i=k-1$. It then follows that $\varphi_i$ is an isomorphism if and only if
\[H^i(X,\Omega_X^k\otimes L^m)=0\]
for all $i\ge 1$ and $m\ge 0$, except possibly when $i=k$ and $m=0$, in which case
\[H^{k-1}(X,\Omega_X^{k-1})\xto{\cup c_1(L)} H^k(X,\Omega_X^k)\]
is an isomorphism. This completes the proof of (\ref{condition: pre-k-rat-cones}). 

For the second statement, if $Z$ has pre-$n$-rational singularities, we need to show that 
\[\H^i(\DD_Z(\DB_Z^0))=0\] 
for all $i>0$. Consider the following commutative diagram
\[\begin{tikzcd}
	{\CC[-n-1] \cong \DD_Z(\DB_{pt}^0)} & {\DD_Z(\DB_Z^0)} & {\bR f_*\omega_{\tilde Z}(X)} & { } \\
	{\bR f_*\omega_{X}[-1]} & {\bR f_*\omega_{\tilde Z}} & {\bR f_*\omega_{\tilde Z}(X)} & { }
	\arrow[from=1-2, to=1-3]
	\arrow[from=1-1, to=1-2]
	\arrow[from=2-1, to=1-1]
	\arrow[from=2-2, to=1-2]
	\arrow[from=2-1, to=2-2]
	\arrow[from=2-2, to=2-3]
	\arrow[equal, from=1-3, to=2-3]
	\arrow["{+1}", from=1-3, to=1-4]
	\arrow["{+1}", from=2-3, to=2-4]
\end{tikzcd}\]
By using Grauert-Riemanschneider vanishing theorem and taking cohomology, we obtain
\[\H^i(\DD_Z(\DB^0_Z)) \cong \bR^i f_*\omega_{\tilde Z}(X) \cong H^i(X, \omega_X)= 0\]
for $0< i< n$; and 
\[\begin{tikzcd}
	0 & {\H^{n} (\DD_Z(\DB_Z^0))} & {\bR^n f_*\omega_{\tilde Z}(X)} & \CC & {\H^{n+1} (\DD_Z(\DB_Z^0))} & 0 \\
	& 0 & {\bR^n f_*\omega_{\tilde Z}(X)} & {H^n(X, \omega_X)} & 0
	\arrow[from=1-5, to=1-6]
	\arrow[from=1-4, to=1-5]
	\arrow[from=2-4, to=2-5]
	\arrow[from=1-3, to=1-4]
	\arrow[from=1-2, to=1-3]
	\arrow[from=1-1, to=1-2]
	\arrow["\cong"', from=2-4, to=1-4]
	\arrow[from=2-5, to=1-5]
	\arrow[equal, from=2-3, to=1-3]
	\arrow["\cong", from=2-3, to=2-4]
	\arrow[from=2-2, to=2-3]
	\arrow[from=2-2, to=1-2]
\end{tikzcd}\]
The diagram induces the isomorphism 
\[\bR^n f_*\omega_{\tilde Z}(X) \xto{\sim} \CC,\]
which implies 
\[\H^{n} \DD_Z(\DB_Z^0) = \H^{n+1} \DD_Z(\DB_Z^0) = 0.\]

\end{proof}

\begin{remark}
Since $Z$ is normal, it has rational singularities if and only if it has pre-$0$-rational singularities if and only if
\[H^i(X,L^m)=0, \text{ for all }i>0, m\ge 0\]
by our criterion. This recovers the well-known \cite[Proposition 3.13]{MMP-singularities}.
\end{remark}


\subsection{Cones over Veronese varieties}
Following \cite{greb-rollenske-torsion-kahler}, we denote by $X_{d,r}$ the classical affine cone over the $d$-th Veronese varieties of $\PP^r$. Since the Veronese varieties are projectively normal, $X_{d,r}=C(\PP^r,\O_{\PP^r}(d))$ in our notation. 

\begin{example}\label{ex:cone-over-P^r-pre-DB}
$X_{d,r}$ has pre-$k$-rational singularities for all $k$. Indeed, Bott vanishing theorem for $\PP^r$ shows that 
\[H^i(\PP^r, \Omega^k_{\PP^r}(md))=0\]
for all $i>0, k\ge 0$ and $m\ge 0$, except when $m=0$ and $i=k$. In that case, the ismorphisms  
\[H^0(\PP^r, \O_{\PP^r})\overset{\simeq}{\longrightarrow}H^1(\PP^r, \Omega^1_{\PP^r})\overset{\simeq}{\longrightarrow}\cdots \overset{\simeq}{\longrightarrow} H^r(\PP^r, \omega_{\PP^r})\]
are given by wedging with the hyperplane class.

By theorem \ref{theorem:pre-k-rational-implies-pre-k-DB}, $X_{d,r}$ has pre-$k$-Du Bois singularities for all $k$. This is consistent with Corollary \ref{cor:Bott-van} and the fact $\PP^r$ satisfies Bott vanishing. Note also that $X_{d,r}$ is a toric variety, and this example is consistent with the results in section \ref{section:toric-varieties}.
\end{example}

\subsection{Cones over K3 surfaces}

Let $X$ be a smooth quartic in $\PP^3$, which is a $K3$ surface. Let $L=\O_X(l)$ be an ample line bundle on $X$. It is not hard to check that the image $X'\subset \PP^n$ of the embedding of $X$ by $L$ is projectively normal, so  $C(X,L)=C(X')$ is the classical cone over $X'$.

\begin{example}\label{ex:cone-over-K3-not-rational}
For any $l\ge 1$, the affine cone $Z=C(X,\O_X(l))$ is Du Bois but does not have rational singularities. Indeed, since $H^2(X,\O_X)\neq 0$, $Z$ never has rational singularities by Proposition \ref{corollary: pre-k-rational for cone}. On the other hand, $H^i(X,\O_X(m))=0$ for all $i>0$ and $m\ge 1$ by the Kodaira vanishing, so $Z$ is Du Bois for any $l\ge 1$.
\end{example}

\begin{example}\label{ex:cone-over-K3-pre-DB}
The affine cone $Z=C(X,\O_X(l))$ has pre-$1$-Du Bois singularities if and only if it has pre-$2$-Du Bois singularities, if and only if $l\ge 5$. By the Kodaira-Akizuki-Nakano vanishing, we get $H^2(X,\Omega_X^1(m))=0$ and $H^1(X,\Omega_X^2(m))=0$ for all $m\ge 1$. It remains to compute $H^1(X,\Omega_X^1(m))$. By considering the restriction of the Euler sequence to $X$
\[0\to \Omega_{\PP^3}^1|_X(m)\to \O_X(m-1)^{\oplus 4}\to \O_X(m)\to 0,\]
together with $X$ being projectively normal, we have $H^1(X, \Omega_{\PP^3}^1|_X(m))=0$ for $m\ge 1$, and $H^2(X, \Omega_{\PP^3}^1|_X(m))=0$ for $m\ge 2$. Moreover,  $H^2(\Omega_{\PP^3}^1|_X(1))\cong H^0(X,\O_X)^{\oplus 4}$ is a 4-dimensional vector space.
From the conormal exact sequence
\[0\to \O_X(m-4)\to \Omega_{\PP^3}^1|_X(m) \to \Omega_X^1(m)\to 0,\]
we obtain
\[H^1(\Omega_X^1(m))\cong \ker\bracket{H^2(\O_X(m-4))\to H^2(\Omega_{\PP^3}^1|_X(m))}.\]
By comparing dimensions of both sides, we see that $H^1(\Omega_X^1(m))=0$ if and only if $m\ge 5$.

\end{example}

\appendix
\section{Du Bois complex of cones over smooth projective varieties}\label{appendix:cones}

In this appendix, we give a proof to Proposition \ref{prop:DB-complex}.

\subsection{Computation of $\DB_Z^0$} Recall that $Z$ has a natural strong log resolution given by 
\[f: \tilde Z = {\bf Spec}_X\bigoplus_{m\ge 0} L^m \to Z\]
with the exceptional divisor $E\cong X$. Note that $\tilde Z$ is a vector bundle over $X$ via the projection $\pi:\tilde Z \to X$. From the blow-up square
\[\begin{tikzcd}
	E\cong X & {\tilde{Z}} \\
	v & Z
	\arrow["i", from=1-1, to=1-2]
	\arrow["f"', from=1-1, to=2-1]
	\arrow[from=2-1, to=2-2]
	\arrow["f", from=1-2, to=2-2]
\end{tikzcd}\]
where $v$ is the vertex of the cone $Z$, we obtain an exact triangle
\[\DB_Z^0\to \bR f_*\O_{\tilde{Z}}\oplus \O_{v} \to \bR f_*\O_X \xto{+1}.\]
Since the composition $X \xto{i} \tilde Z \xto{\pi} X$ is the identity, the associated long exact sequence of cohomology breaks into short exact sequences
\[0 \to \H^0\DB_Z^0 \to f_*\O_{\Tilde{Z}}\oplus \O_v \to f_*\O_X \to 0,\]
and for $i\ge 1$,
\[0 \to \H^i\DB_Z^0 \to R^if_*\O_{\Tilde{Z}} \to R^if_*\O_X \to 0.\]
Since $Z$ is affine, by taking global sections, we obtain short exact sequences
\[0\to \Gamma(\H^0\DB_Z^0)\to H^0(\tilde{Z},\O_{\tilde{Z}})\oplus \CC \to H^0(X,\O_X)\to 0,\]
and for $i\ge 1$,
\[0\to \Gamma(\H^i\DB_Z^0)\to H^i(\tilde{Z},\O_{\tilde{Z}}) \to H^i(X,\O_X)\to 0.\]
Note that the first map in the composition
\[\O_X\to \pi_*\O_{\tilde{Z}} \cong \bigoplus_{m\ge 0} L^m \to \O_X\]
is given by the inclusion of the $m=0$ summand. 
Thus the composition
\[H^i(X,\O_X)\to H^i(\tilde{Z},\O_{\tilde{Z}}) \cong \bigoplus_{m\ge 0} H^i(X,L^m) \to H^i(X,\O_X)\]
is an isomorphism for $i\ge 0$, in which
the first map is given by the inclusion of the $m=0$ summand.
Applying this to the above exact sequence, we obtain 
\[\Gamma(\H^i\DB_Z^0) \cong \bigoplus_{m\ge 1} H^i(X,L^m),\]
for any $i\ge 1$. 
For $i=0$, the map
\[H^0(\tilde{Z},\O_{\tilde{Z}})\oplus \CC \cong 
\bigoplus_{m\ge 0} H^0(X,L^m)\oplus \CC\to H^0(X,\O_X)\]
is given by $(x, \alpha) \mapsto \varphi(x)-\alpha$, where $\varphi: \bigoplus_{m\ge 0}H^i(X,L^m)\to H^0(X,\O_X)$ is the projection onto the $m=0$ summand. Its kernel is thus
\[\Gamma(\H^0\DB_Z^0)\cong \bigoplus_{m\ge 0} H^0(X,L^m)\cong \O_Z.\]

\subsection{Computation of $\DB_Z^p$ for $p\ge 1$.}
The strategy is similar, except that the isomorphism $\pi_*\O_{\tilde{Z}}\cong \bigoplus_{m\ge 1} L^m$ is replaced by the following short exact sequence.

\begin{lemma}\label{Lemma:SES-cones}
Let $X$ be a smooth projective variety and $L$ an ample line bundle on $X$. Then for each $p\ge 1$, there is a split exact sequence
\[0\to \bigoplus_{m\ge 0}\Omega_X^p\otimes L^m \to \pi_*\Omega^p_{\tilde{Z}}\to \bigoplus_{m\ge 1}\Omega_X^{p-1}\otimes L^m\to 0.\]
Thus for each $i\ge 0$ and $p\ge 1$, we have a split short exact sequence
\[0\to \bigoplus_{m\ge 0}H^i(X,\Omega_X^p\otimes L^m) \to H^i(\tilde{Z},\Omega^p_{\tilde{Z}})\to \bigoplus_{m\ge 1}H^i(X,\Omega_X^{p-1}\otimes L^m)\to 0.\]
\begin{proof}
Consider the sequence of differentials
\[0\to \pi^*\Omega_X^1\to \Omega^1_{\tilde{Z}}\to \Omega^1_{\tilde{Z}/X} \cong \pi^*L\to 0.\] 
Taking $p$-th exterior powers gives
\[0\to \pi^*\Omega_X^p\to \Omega_{\tilde{Z}}^p\to \pi^*(\Omega_X^{p-1}\otimes L)\to 0.\]
By pushing forward via the affine morphism $\pi: \tilde{Z}\to X$, we obtain
\[0\to \bigoplus_{m\ge 0}\Omega_X^p\otimes L^m\to \pi_*\Omega^p_{\tilde{Z}}\to \bigoplus_{m\ge 1}\Omega_X^{p-1}\otimes L^m\to 0.\]
The composition
\[\bigoplus_{m\ge 1}\Omega_X^{p-1}\otimes L^m\to \pi_*\Omega_{\tilde{Z}}^{p-1}\xto{d} \pi_*\Omega^{p}_{\tilde{Z}}\]
gives a splitting morphism.  
To see this, consider affine open set $U = \Spec A\subset X$, on which $L$ is trivial, say $L|_U = tA$. We have $\pi_*\O_{\tilde{Z}}|_U = A[t]$. Thus the splitting morphism is given by 
\[\omega\otimes t^m \xmapsto{d} d\omega\otimes t^m + m\omega t^{m-1} dt,\] 
where $\omega$ is any $(p-1)$-form on $X$. Note that $d\omega\otimes t^m\in \Omega_{A}^{p-1}\otimes A[t]$ is a local section of $\bigoplus_{m\ge 0} \Omega_A^p\otimes L^m$, hence is killed by the morphism $\pi_*\Omega^p_{\tilde Z} \to \bigoplus_{m\ge 1}\Omega_X^{p-1}\otimes L^m$. Since the composition map can be identified with multiplication by $m$, it is an isomorphism when we are in characteristic zero.
\end{proof}
\end{lemma}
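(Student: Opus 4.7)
The plan is to derive the split exact sequence from the relative cotangent sequence for the line bundle $\pi: \tilde Z \to X$, then use the de Rham differential $d$ to produce the splitting. The cohomology statement will follow by applying $H^i(X,-)$, using that $\pi$ is affine.

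First, I would identify the relative cotangent sheaf. Since $\tilde Z = {\bf Spec}_X\bigoplus_{m \ge 0} L^m$ is the total space of a line bundle over $X$ with $\pi_*\O_{\tilde Z} \cong \bigoplus_{m \ge 0} L^m$, we have $\Omega^1_{\tilde Z/X} \cong \pi^*L$. This gives the short exact sequence
\[ 0 \to \pi^*\Omega^1_X \to \Omega^1_{\tilde Z} \to \pi^*L \to 0, \]
which is locally split and so remains exact after taking $p$-th exterior powers:
\[ 0 \to \pi^*\Omega^p_X \to \Omega^p_{\tilde Z} \to \pi^*(\Omega^{p-1}_X \otimes L) \to 0. \]
Applying the exact functor $\pi_*$ and using the projection formula $\pi_*\pi^*\F = \F \otimes \bigoplus_{m \ge 0} L^m$, I obtain, after reindexing the second summand, the exact sequence
\[ 0 \to \bigoplus_{m \ge 0} \Omega^p_X \otimes L^m \to \pi_*\Omega^p_{\tilde Z} \to \bigoplus_{m \ge 1} \Omega^{p-1}_X \otimes L^m \to 0. \]

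For the splitting, the natural candidate is the de Rham differential $d : \pi_*\Omega^{p-1}_{\tilde Z} \to \pi_*\Omega^p_{\tilde Z}$, restricted along the inclusion $\bigoplus_{m \ge 1}\Omega^{p-1}_X \otimes L^m \hookrightarrow \pi_*\Omega^{p-1}_{\tilde Z}$. I would check that the composition with the projection to $\bigoplus_{m\ge 1}\Omega^{p-1}_X \otimes L^m$ is an isomorphism by a local computation: trivializing $L$ on an affine open $\Spec A \subset X$ by a coordinate $t$, the image of $\omega \otimes t^m$ is $d\omega \otimes t^m + m\omega \wedge dt \cdot t^{m-1}$; the first summand lies in $\bigoplus_{m\ge 0}\Omega^p_X \otimes L^m$ and vanishes after projection, while the second contributes $m\,\omega \otimes t^m$ to the quotient. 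Since we work in characteristic zero, multiplication by $m$ is invertible for each $m \ge 1$, providing the splitting.

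The cohomology statement then follows by applying $H^i(X,-)$: since $\pi$ is affine, $H^i(\tilde Z, \Omega^p_{\tilde Z}) \cong H^i(X, \pi_*\Omega^p_{\tilde Z})$, and the sheaf-level splitting immediately propagates to a splitting of the long exact sequence. I expect the main, though minor, obstacle to be verifying the convention-dependent identification $\Omega^1_{\tilde Z/X} \cong \pi^*L$ (rather than $\pi^*L^{\vee}$) and making sure the local computation of the splitting is consistent with the global formula; both reduce to straightforward bookkeeping with the grading on $\bigoplus_{m \ge 0} L^m$.
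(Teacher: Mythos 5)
Your proposal is correct and follows essentially the same route as the paper: the relative cotangent sequence with $\Omega^1_{\tilde Z/X}\cong \pi^*L$, its $p$-th exterior power, pushforward along the affine map $\pi$, and the de Rham differential as the splitting verified by the same local computation $\omega\otimes t^m\mapsto d\omega\otimes t^m+m\omega t^{m-1}dt$ in characteristic zero.
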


Now we proceed as in the case $p=0$. The long exact sequence associated to the exact triangle
\[\DB_Z^p\to \bR f_*\Omega_{\tilde{Z}}^p \to \bR f_*\Omega_X^p \xto{+1}\]
splits into short exact sequences
\[0 \to \H^i\DB_Z^p \to R^if_*\Omega^p_{\Tilde{Z}} \to R^if_*\Omega_X^p \to 0,\]
which by taking global section, gives
\begin{equation*}
0\to \Gamma(\H^i\DB_Z^p)\to H^i(\tilde{Z},\Omega^p_{\tilde{Z}})\to H^i(X,\Omega^p_X)\to 0
\end{equation*}
for any $i\ge 0$ and $p\ge 1$. 
It then follows from Lemma \ref{Lemma:SES-cones} that
\[\Gamma(\H^i\DB_Z^p)\cong \bigoplus_{m\ge 1} H^i(\Omega_X^p\otimes L^m)\oplus \bigoplus_{m\ge 1} H^i(\Omega_X^{p-1}\otimes L^m)\]
for all $p\ge 1$ and $i\ge 0$. This completes the proof.

\bibliographystyle{alpha}
\bibliography{reference}

\Addresses
\end{document}